\documentclass[12pt]{article}
\usepackage{amssymb,amsfonts,amsthm,amsmath,mathrsfs,xspace,hyperref}
\usepackage{graphicx}
\usepackage[francais,english]{babel}
\usepackage{inputenc}
\usepackage[T1]{fontenc}
\usepackage{authblk}
\usepackage[margin=1.45in, centering]{geometry}
\usepackage{csquotes}
\usepackage{color}
\usepackage{enumitem}

\usepackage[textsize=footnotesize,textwidth=15ex]{todonotes}

\usepackage{titlesec}
\titleformat{\subsection}[runin]{\normalfont\bfseries}{\thesubsection}{0.5em}{}[.]
\titleformat{\subsubsection}[runin]{\normalfont\bfseries}{\thesubsubsection}{0.5em}{}[.]

	\newcommand{\nn}{\mathbf{n}}
\newcommand{\kk}{\mathbf{k}}

\newcommand{\Sub}{\ensuremath{\mathbf{Sub}}}%

        \renewcommand{\H}{\ensuremath{\mathcal{H}}}%
                \newcommand{\K}{\ensuremath{\mathcal{K}}}%

  \newcommand{\Sym}{\ensuremath{\operatorname{Sym}}}%
      \newcommand{\Alt}{\ensuremath{\operatorname{Alt}}}%
	\newcommand{\Fix}{\ensuremath{\textrm{Fix}}}%
  \newcommand{\sub}{\ensuremath{\mathrm{Sub}}}%
  \newcommand{\stab}{\ensuremath{\operatorname{Stab}}}%
        \newcommand{\Aut}{\ensuremath{\operatorname{Aut}}}%
    \newcommand{\rist}{\ensuremath{\operatorname{Rist}}}%
		     \newcommand{\Stab}{\ensuremath{\operatorname{Stab}}}%

\newcommand{\Comm}{\mathrm{Comm}}

\theoremstyle{definition}
  \newtheorem{defin}{Definition}[section]

  \newtheorem{problem}[defin]{Problem}

\theoremstyle{plain}
  \newtheorem{thm}[defin]{Theorem}
  \newtheorem{main thm}{Theorem}
  \newtheorem{prop}[defin]{Proposition}
    \newtheorem{prop-def}[defin]{Proposition-Definition}

  \newtheorem{cor}[defin]{Corollary}
\newtheorem{lem}[defin]{Lemma}

\theoremstyle{remark}
  \newtheorem{rmk}[defin]{Remark}

\title{Piecewise strongly proximal actions, \\
free  boundaries and the Neretin groups}

\author{Pierre-Emmanuel Caprace\thanks{F.R.S.-FNRS Senior Research Associate. \texttt{pe.caprace@uclouvain.be}}}

\author{Adrien Le Boudec\thanks{CNRS Researcher. \texttt{adrien.le-boudec@ens-lyon.fr}}} 

\author{Nicol\'as Matte Bon\thanks{CNRS Researcher. \texttt{mattebon@math.univ-lyon1.fr}}} 
\affil{UCLouvain, 1348 Louvain-la-Neuve, Belgium}
\affil{UMPA - ENS Lyon, France}
\affil{ICJ - Universite de Lyon I, France}

\date{December 8, 2022}


\begin{document}
	
\newgeometry{margin=1.1in}
	
	\maketitle
	

%

\begin{abstract}
A closed subgroup $H$ of a locally compact group $G$ is confined if the closure of the conjugacy class of $H$ in the Chabauty space of $G$ does not contain the trivial subgroup. We establish a dynamical criterion on the action of a totally disconnected locally compact group $G$ on a compact space $X$ ensuring that no relatively amenable subgroup of $G$ can be confined. This property is equivalent to the fact that the action of $G$ on its Furstenberg boundary is  free. Our criterion applies to the Neretin groups. We  deduce that each Neretin group has two inequivalent irreducible unitary representations that are weakly equivalent. This implies that the Neretin groups are  not of type I, thereby answering  a question of Y.~Neretin.
\end{abstract}


%

\section{Introduction}

Let $G$ be a locally compact group. A compact $G$-space $X$ is a compact space equipped with a continuous action of $G$. The action of $G$ on $X$ is \textbf{strongly proximal} if for every $\mu \in \mathrm{Prob}(X)$, the closure of the $G$-orbit of $\mu$ in $\mathrm{Prob}(X)$ contains a Dirac measure, where the space $\mathrm{Prob}(X)$ of Borel probability measures on $X$ is endowed with the weak$^\ast$ topology. The $G$-space $X$ is a (topological) \textbf{$G$-boundary} if the $G$-action is minimal and strongly proximal \cite{Furst-bnd-theory}. If $G$ is an amenable group, the only $G$-boundary is the one-point space; and this property actually characterizes amenability among locally compact groups. 

Every group $G$ admits a  $G$-boundary, unique up to isomorphism, with the property that every $G$-boundary is a factor of it. It is called the \textbf{Furstenberg boundary} of $G$; we denote it by  $\partial_F G$. The group $G$ acts faithfully on $\partial_F G$ if and only if the only amenable normal subgroup of $G$ is the trivial subgroup $\langle e \rangle$ \cite{Furman-min-strg}. In this note we are interested in the following:

\begin{problem} \label{prob-intro}
Determine when the action of $G$ on $\partial_F G$ is free.
\end{problem}

A key motivation for that question is that the freeness of the $G$-action on $\partial_F G$  is equivalent to various other properties of the group $G$. We say that the action of $G$ on a minimal compact $G$-space $X$ is \textbf{topologically free} if there is a dense set of points in $X$ that have a trivial stabilizer in $G$.  We say that   $H$ is \textbf{relatively amenable} in $G$ if $H$ fixes a probability measure on  every compact $G$-space. Clearly, every amenable subgroup is relatively amenable; when $G$ is discrete, the converse holds, see \cite{Cap-Mon-RelMoy}.  A \textbf{uniformly recurrent subgroup} (or \textbf{URS} for short) of $G$ is a minimal $G$-invariant closed subset of the Chabauty space $\sub(G)$ of closed subgroups of $G$ (we recall that $\sub(G)$  is compact). A closed subgroup $H \leq G$ is  \textbf{confined} in $G$ if the closure of the conjugacy class of $H$ in $\sub(G)$ avoids the trivial subgroup $\langle e\rangle$.

\begin{thm}[{See \cite{KK, BKKO, Kennedy} in the case of discrete groups}]\label{thm:equiv-free}
Let $G$ be a locally compact group. The following conditions are equivalent. 
\begin{enumerate}[label=(\roman*)]
\item The $G$-action on $\partial_F G$ is free.
\item There is a $G$-boundary on which the $G$-action is topologically free. 
\item No relatively amenable closed subgroup of $G$ is confined.
\item The  only relatively amenable URS of $G$ is the trivial subgroup. 
\end{enumerate}
If in addition $G$ is discrete, then those are also equivalent to:
\begin{enumerate}[resume*]
\item $G$ is  \textbf{$C^\ast$-simple}, i.e.\ the reduced $C^\ast$-algebra of $G$ is a simple $C^\ast$-algebra. 
\end{enumerate}
\end{thm}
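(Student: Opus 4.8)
The plan is to prove the four conditions equivalent by treating the stabilizer uniformly recurrent subgroup $A_G := S_{\partial_F G}$ of the Furstenberg boundary as the central object, deriving the discrete statement (v) by quoting \cite{KK, BKKO, Kennedy} together with the coincidence of relative amenability and amenability for discrete groups \cite{Cap-Mon-RelMoy}. I would first record three auxiliary facts. (a) \emph{Relative amenability is a closed condition in $\sub(G)$}: if $H_i \to H$ with each $H_i$ relatively amenable and $Z$ is any compact $G$-space, choose $H_i$-fixed $\mu_i \in \mathrm{Prob}(Z)$ and let $\mu$ be a weak$^\ast$ cluster point; since every $h \in H$ is a Chabauty limit of elements $h_i \in H_i$, joint continuity of $G \acts \mathrm{Prob}(Z)$ gives $h\mu = \lim h_i\mu_i = \mu$, so $H$ is relatively amenable. (b) The equivalence (iii)$\Leftrightarrow$(iv) is then formal: a confined relatively amenable $H$ has a conjugacy-class closure avoiding $\langle e\rangle$ which, by (a), consists of relatively amenable subgroups, so any minimal subset is a nontrivial relatively amenable URS; conversely a nontrivial URS cannot contain $\langle e\rangle$ (a conjugation-fixed point), whence each of its members is confined. (c) The stabilizer map $x \mapsto \Stab_G(x)$ has closed graph, so its continuity points form a dense $G_\delta$, and the closure in $\sub(G)$ of the stabilizers over these points is the URS $A_G$.

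The bridge from the boundary to the subgroup side is the dynamical heart of (i)$\Rightarrow$(iv). Given a nontrivial relatively amenable URS $\mathcal{H}$, pick $H \in \mathcal{H}$; relative amenability furnishes an $H$-invariant $\mu \in \mathrm{Prob}(\partial_F G)$, and strong proximality provides a net $g_i$ with $(g_i)_\ast \mu \to \delta_z$. Passing to a subnet so that $g_i H g_i^{-1} \to K$ in $\sub(G)$, one has $K \in \mathcal{H}$ (as $\mathcal{H}$ is closed and conjugation-invariant), and for each $k \in K$, writing $k = \lim g_i a_i g_i^{-1}$ with $a_i \in H$, the identity $(g_i a_i g_i^{-1})_\ast (g_i)_\ast \mu = (g_i)_\ast \mu$ passes to the limit to give $kz = z$. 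Since $\langle e\rangle \notin \mathcal{H}$, this exhibits a point of $\partial_F G$ with nontrivial stabilizer, contradicting (i); hence (i)$\Rightarrow$(iv).

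For the reverse direction (iv)$\Rightarrow$(i) I would show that $A_G$ is relatively amenable and is nontrivial whenever the action fails to be free. Relative amenability of every point stabilizer $H = \Stab_G(x)$ follows from the topological amenability of $G \acts \partial_F G$: choosing an asymptotically equivariant net of continuous maps $m_i \colon \partial_F G \to \mathrm{Prob}(G)$ and evaluating at the $H$-fixed point $x$ yields asymptotically $H$-invariant measures $m_i(x)$, which push forward through $g \mapsto g z_0$ to asymptotically $H$-invariant measures on any compact $G$-space $Z$; a cluster point is $H$-fixed. To see that non-freeness forces $A_G$ nontrivial I would upgrade topological freeness to freeness \emph{on the Furstenberg boundary}: $\partial_F G$ is extremally disconnected, so by Frol\'ik's theorem $\Fix(g)$ is clopen for every $g$; were the action topologically free, each $\Fix(g)$ with $g \neq e$ would be clopen with dense complement, hence empty, so the action would be free. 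Thus $A_G$ trivial $\Leftrightarrow$ topologically free $\Leftrightarrow$ free, and when the action is not free $A_G$ is a nontrivial relatively amenable URS, contradicting (iv). The same upgrade gives (ii)$\Rightarrow$(i): a factor map $\pi \colon \partial_F G \to X$ from a topologically free boundary $X$ satisfies $\Stab_G(x) \le \Stab_G(\pi x)$, so the ($G$-invariant, hence by minimality dense) preimage of the free locus of $X$ makes $\partial_F G$ topologically free, thus free by the upgrade above; and (i)$\Rightarrow$(ii) is immediate since $\partial_F G$ is itself a boundary.

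The main obstacle I anticipate is the pair of structural inputs specific to the locally compact setting: that the point stabilizers of a $G$-boundary are \emph{relatively} (rather than fully) amenable, and that $\partial_F G$ is extremally disconnected with a well-behaved stabilizer URS. Both are clear for discrete groups but require care here — the first because relative amenability is the correct substitute for amenability precisely when $G$ is non-discrete, and the second because the extremal disconnectedness of the Furstenberg boundary and the Frol\'ik-type rigidity must be transported to the locally compact category. Establishing these cleanly, together with the semicontinuity properties of the stabilizer map used to define $A_G$, is where the real work lies; the remaining implications are then either formal or follow from the dynamical argument above.
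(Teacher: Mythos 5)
Your overall architecture (closedness of relative amenability in $\sub(G)$, the formal equivalence (iii)$\Leftrightarrow$(iv), and the strong-proximality argument pushing an invariant measure to a Dirac mass for (i)$\Rightarrow$(iv)) matches the paper and is sound. But the two implications that return to freeness of $\partial_F G$ — namely (ii)$\Rightarrow$(i) and the corresponding step of (iv)$\Rightarrow$(i) — rest on a premise that is simply false outside the discrete case: that $\partial_F G$ is extremally disconnected, so that Frol\'ik's theorem upgrades topological freeness to freeness. For a connected semisimple Lie group, $\partial_F G$ is the flag manifold $G/P$ (connected); for $G = \mathrm{SL}_2(\mathbf Q_p)$ it is $\mathbb P^1(\mathbf Q_p)$, a metrizable Cantor set; neither is extremally disconnected (an infinite extremally disconnected compact space is never metrizable, let alone connected). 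Extremal disconnectedness of the Furstenberg boundary is a genuinely discrete-group phenomenon and cannot be ``transported'' to the locally compact category, so this step is not a detail to be filled in but a missing argument. The paper handles exactly this point by invoking the main result of \cite{LB-Tsankov}, which asserts that for \emph{every} locally compact group the stabilizer map $\partial_F G \to \sub(G)$ is continuous everywhere; combined with density of the trivial-stabilizer locus (pulled back through the factor map, as you do), continuity forces all stabilizers to be trivial. That continuity theorem is a substantial result proved by entirely different means, and it is the actual content standing behind (ii)$\Rightarrow$(i); without it, your chain only closes in the discrete case, which is precisely the case the theorem quotes from \cite{KK, BKKO, Kennedy}.

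A second, independent error: you derive relative amenability of the point stabilizers $G_x \leq G$ from ``topological amenability of $G \acts \partial_F G$''. That action is topologically amenable if and only if $G$ is exact (a theorem of Kalantar--Kennedy / \cite{BKKO} for discrete groups), so your asymptotically equivariant maps $m_i \colon \partial_F G \to \mathrm{Prob}(G)$ do not exist for non-exact groups, and the premise fails in general. Fortunately the conclusion you want is immediate and needs none of this: $G_x$ fixes the Dirac measure $\delta_x \in \mathrm{Prob}(\partial_F G)$, and by the characterization recorded in the paper's preliminaries (a closed subgroup is relatively amenable if and only if it fixes a probability measure on $\partial_F G$), $G_x$ is relatively amenable. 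Finally, a smaller point: your step (c), producing the stabilizer URS $A_G$ from a dense $G_\delta$ of continuity points, uses Kuratowski's theorem and hence metrizability of $\sub(G)$, i.e.\ second countability of $G$; the paper makes this hypothesis explicit in Proposition~\ref{prop:rel-amen-confined} and removes it only by appealing, again, to \cite{LB-Tsankov} (Remark~\ref{rem:2nd-countable}). So your write-up should either assume $G$ second countable at that step or cite the same continuity theorem.
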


In the case where the group $G$ is discrete, it follows  from the recent works \cite{KK, BKKO, Kennedy} that the five conditions in Theorem~\ref{thm:equiv-free} are equivalent. For a general  (e.g. indiscrete) locally compact group $G$, the implications  $\textrm{(i)} \Rightarrow \textrm{(ii)} \Leftrightarrow \textrm{(iii)} \Leftrightarrow \textrm{(iv)}$ are rather straightforward, at least if $G$ is second countable (see Proposition~\ref{prop:rel-amen-confined} and Remark~\ref{rem:2nd-countable} below), while the converse implication $\textrm{(i)} \Leftarrow \textrm{(ii)}$ is a particular case of a result from \cite{LB-Tsankov}.  However, it is an important open problem to determine whether (v) is also equivalent  to (i)--(iv). It is worth noting that each of those conditions implies that the group $G$ is totally disconnected (see \cite{Raum} for (v), and the discussion in the following paragraphs for (iv)).

Conditions (iii) and (iv) highlight an essential feature of Problem \ref{prob-intro}, namely that it can be reformulated in terms of the $G$-action on the space of its closed subgroups. Indeed, this allows one to avoid constructing explicitly any $G$-boundary, and simply study the conjugation action of $G$ on its relatively amenable subgroups. For example, if $G$ is a discrete hyperbolic group, every amenable subgroup is virtually cyclic, hence finitely generated. In particular $G$ has countably many amenable subgroups, and it is a general fact that for a group $G$ with this property, the only amenable URS of $G$ is trivial, provided $G$ has no  amenable normal subgroup other than the trivial subgroup. This situation also covers CAT(0) groups by \cite[Cor.~B]{AdamsBallmann}. Many other discrete groups admitting an isometric action satisfying a combination of weak forms of properness and of non-positive curvature can be proved to have a free Furstenberg boundary in a similar (but more elaborate) way, see \cite{dlHarpe, DGO, BKKO}. Note that the requirement of a certain form of properness cannot be dropped according to \cite{LB-cs}. The reformulation of Problem \ref{prob-intro} in terms of confined subgroups has also been exploited in the realm of discrete groups of dynamical origin in \cite{LBMB-sub-dyn}.

For non-discrete groups, it turns out that Problem~\ref{prob-intro} has a very different flavour. Indeed, contrary to the discrete case where numerous familiar groups have no non-trivial amenable URS, many natural non-discrete locally compact groups do admit a non-trivial relatively amenable URS. This is notably the case for semisimple Lie groups and semisimple algebraic groups over local fields: any such group $G$ indeed has a cocompact amenable subgroup $P$. By cocompactness, the conjugacy class of $P$ is closed, so $P$ must be confined. Beyond the classical case, in a locally compact group $G$ acting properly and strongly transitively on a locally finite building of arbitrary (not necessarily Euclidean) type, every maximal compact subgroup is confined (this follows from \cite[Th.~4.10]{CapLec}). Thus, many natural examples of non-discrete locally compact groups fail to satisfy the condition of Problem~\ref{prob-intro}. Note that the above fact for semisimple Lie groups together with  \cite[Th. 3.3.3]{Bur-Mon-bound-coho-rigid} imply that every locally compact group $G$ such that the  only relatively amenable URS of $G$ is the trivial subgroup, must be totally disconnected.

The first goal of this note is to   contribute to Problem~\ref{prob-intro} by establishing a sufficient criterion for a locally compact group $G$ that ensures a positive answer to Problem \ref{prob-intro}. In view of the preceding discussion, there is no loss of generality in restricting to the case where $G$ is totally disconnected. Given a compact $G$-space $X$ and a clopen subset $\alpha$ of $X$, the rigid stabilizer of $\alpha$ in $G$ is the pointwise fixator of $X \! \setminus \! \alpha$. It is denoted by $R_G(\alpha) = \Fix_G(X \! \setminus \! \alpha)$.  We say that the action of $G$ on $X$ is \textbf{piecewise minimal-strongly-proximal} if the action of $R_G(\alpha)$ on $\alpha$ is minimal and strongly proximal for every non-empty clopen subset $\alpha$ of $X$. 

\begin{thm}\label{thm:intro}
	Let $G$ be a totally disconnected locally compact group. Suppose that there exists a totally disconnected compact $G$-space $X$ such that the $G$-action on $X$ is faithful and 
	piecewise minimal-strongly-proximal. Then $G$ does not have any relatively amenable confined subgroup. Equivalently, $G$ acts freely on its Furstenberg boundary $\partial_F G$.
\end{thm}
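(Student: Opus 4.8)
\emph{Reduction and setup.} By Theorem~\ref{thm:equiv-free} the two conclusions are equivalent, so the plan is to establish condition (iii): no relatively amenable closed subgroup of $G$ is confined. I argue by contradiction, assuming $H\leq G$ is closed, relatively amenable and confined. Confinement can be recorded concretely in the totally disconnected setting: there is a compact set $Q\subseteq G\setminus\{e\}$ with $gHg^{-1}\cap Q\neq\emptyset$ for every $g\in G$ (take $Q=K\setminus U$ for a compact $K$ and a compact open subgroup $U$ witnessing that the conjugacy class of $H$ stays away from $\langle e\rangle$). Two structural observations drive everything. First, applying the hypothesis to the clopen set $\alpha=X$ gives $R_G(X)=\Fix_G(\emptyset)=G$, so the $G$-action on $X$ is itself minimal and strongly proximal; thus $X$ is a $G$-boundary and a factor map $\partial_F G\to X$ exists. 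Second, for every nonempty clopen $\alpha$ the group $R_G(\alpha)$ acts minimally and strongly proximally on $\alpha$ while fixing $X\setminus\alpha$ pointwise, so the action is richly micro-supported. Note this forces every point to have nontrivial stabiliser (any $x$ lies outside some nonempty clopen $\alpha$, and then $R_G(\alpha)\leq\Stab_G(x)$), so $X$ is certainly \emph{not} the free boundary we seek; freeness will only surface on $\partial_F G$.

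\emph{The core mechanism on $X$.} Relative amenability gives an $H$-invariant measure $\mu\in\mathrm{Prob}(X)$. For any $g\in G$, confinement produces $h\in H$ with $ghg^{-1}=q\in Q$; since $h_*\mu=\mu$ this reads $q_*(g_*\mu)=g_*\mu$. Hence every translate $g_*\mu$ is fixed by some element of $Q$, and as $Q$ is compact this passes to weak$^\ast$ limits along nets: every $\nu$ in the orbit closure $K:=\overline{G_*\mu}\subseteq\mathrm{Prob}(X)$ is fixed by some $q\in Q$. The strategy is to contradict this by exhibiting a $\nu\in K$ fixed by no element of $Q$. The tool is condensation: for nonempty clopen $\alpha$, minimality and strong proximality of $R_G(\alpha)\acts\alpha$ imply that for any $\lambda\in\mathrm{Prob}(\alpha)$ the set $\{x\in\alpha:\delta_x\in\overline{R_G(\alpha)_*\lambda}\}$ is closed, $R_G(\alpha)$-invariant and nonempty, hence all of $\alpha$; so the $\alpha$-part of any measure can be pushed, inside $K$, onto a Dirac at \emph{any} chosen point of $\alpha$, leaving the $X\setminus\alpha$-part untouched.

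\emph{Finishing the generic case, and the reduction.} Faithfulness and compactness of $Q$ yield a finite \emph{uniform moving} family: each $q\in Q$ displaces some clopen $V$ (i.e.\ $qV\cap V=\emptyset$), the sets $\{g:gV\cap V=\emptyset\}$ are open and cover $Q$, and a finite subcover gives clopen $V_1,\dots,V_m$ such that every $q\in Q$ satisfies $qV_j\cap V_j=\emptyset$ for some $j$; such a $q$ then moves every point of $V_j$. I then build, inside $K$, a finitely supported $\nu=\sum_i c_i\delta_{x_i}$ with pairwise distinct weights and at least one atom $x_i$ in each $V_j$: using minimality of $G\acts X$ (translates of a nonempty clopen set cover $X$) to charge each $V_j$, and rigid-stabiliser condensation to place the atoms while \emph{protecting} already-charged pieces (rigid stabilisers of disjoint clopen sets commute and fix one another's supports). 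Distinct weights force any $q$ fixing $\nu$ to fix each atom; but every $q\in Q$ moves some $x_i$, so $\nu$ is fixed by no element of $Q$ — contradicting the previous paragraph. This goes through as soon as $K$ contains a non-Dirac measure, i.e.\ whenever $\mu$ may be chosen non-atomic. Since the $H$-invariant measures form a compact \emph{convex} subset of $\mathrm{Prob}(X)$ whose extreme ambient points are the Diracs, either such a non-Dirac $\mu$ exists (and we are done) or this set is a single Dirac $\delta_{x_0}$, i.e.\ $H$ fixes the point $x_0$.

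\emph{The main obstacle: the point-fixing case.} When $H$ fixes $x_0$ the above cannot succeed on $X$: the orbit closure $\overline{G_*\delta_{x_0}}$ consists only of Diracs (no mass can be spread across the $V_j$), and at the level of $X$ nothing forbids a confined subgroup from fixing a point, since every point stabiliser is large. This is where the passage to the universal boundary is essential. I would feed relative amenability into the factor map $\pi\colon\partial_F G\to X$: $H$ fixes some $\mu_F\in\mathrm{Prob}(\partial_F G)$ whose pushforward is the $H$-invariant $\delta_{x_0}$, so $\mu_F$ is carried by the fibre $F=\pi^{-1}(x_0)$, on which $G_{x_0}$ acts and where $H$ is still confined with witness $Q\cap G_{x_0}$. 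The crux is to show that this fibre action again carries the micro-supported strongly proximal structure coming from the rigid stabilisers $R_G(\alpha)$ of shrinking clopen neighbourhoods $\alpha\ni x_0$, so that one may either produce a non-Dirac $H$-invariant measure on $F$ and rerun the condensation argument, or descend through a neighbourhood basis $\alpha_0\supseteq\alpha_1\supseteq\cdots$ of $x_0$, at each stage using confinement and strong proximality of $R_G(\alpha_n)$, until the relevant intersection of stabilisers in $\partial_F G$ collapses to the trivial group, contradicting confinement. Establishing that the fibres of $\partial_F G\to X$ inherit piecewise strong proximality — equivalently, that $\partial_F G$ resolves the rigid stabilisers and has trivial point stabilisers — is the technical heart and the step I expect to be hardest.
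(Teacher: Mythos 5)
Your reduction via Theorem~\ref{thm:equiv-free}, your encoding of confinement by a compact set $Q\subseteq G\setminus\{e\}$ meeting every conjugate of $H$, and your condensation lemma for rigid stabilizers are all sound, and in the case where $H$ admits a \emph{non-atomic} invariant measure your distinct-weights construction can be made to yield a genuine contradiction. But there are two gaps, one of calibration and one fundamental. First, the dichotomy is miscalibrated: the construction needs to split the invariant measure into $m$ atoms with pairwise distinct weights, and this is impossible starting from a measure that is purely atomic with equal weights. Pushforwards preserve the multiset of atom weights, and weak$^\ast$ limits can only merge atoms, never split them; so if the only $H$-invariant measures are uniform measures on a finite $H$-orbit $\{y_1,\dots,y_p\}$ with $p\geq 2$ (no fixed point, no non-atomic invariant measure -- a case your dichotomy silently drops), then every measure in $K$ has all weights equal to $1/p$, and an element $q\in Q$ fixing such a measure may \emph{permute} its atoms rather than fix them. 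Hence ``non-Dirac'' is not enough: the case you must still handle is ``$H$ has a finite orbit'', not merely ``$H$ fixes a point''.

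Second, and decisively, that remaining case is where the entire difficulty of the theorem lies, you explicitly leave it open, and your core mechanism cannot close it, because it only uses that $H$ fixes \emph{one} measure on $X$, and that information cannot distinguish relatively amenable confined subgroups from non-amenable confined subgroups that really do exist. Concretely, in $G=\mathcal N_{d,k}$ the full end stabilizer $G_{x_0}$ is confined (every conjugate $G_z$ meets $Q=U\setminus W$ for $U$ a compact open subgroup and $W$ a small identity neighbourhood), it fixes $\delta_{x_0}$, and every measure in $\overline{G_*\delta_{x_0}}$ is a Dirac fixed by some element of $Q$; so in the point-fixing case no contradiction can be extracted from the data your mechanism uses. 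This is exactly why the paper works with the finer class $\mathcal S_X$ (stabilizers of \emph{every} clopen piece fix measures on that piece, Lemma~\ref{lem-rel-am-fix-measure-locally}) and deploys a different toolkit for the finite-orbit case: a subgroup $L$ in the orbit closure maximizing the number of finite orbits, the set $F$ of points with finite $L$-orbit, the commensurated subgroup $J=\langle U, G_F^0\rangle$ of $G_{(F)}$, Proposition~\ref{prop-H-not-intersect-open} combined with the commutator lemma for normal subgroups and Wu--Yu's theorem (Proposition~\ref{prop-B(G)-am}) to prove $H\cap J\neq\{e\}$, and a final exhaustion/limit argument with the subgroups $J_\alpha=\rist_G(X\setminus\alpha)U$ that exploits the measures fixed on the pieces $X\setminus\alpha$. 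Your fallback plan -- showing the fibres of $\partial_F G\to X$ inherit piecewise strong proximality, ``equivalently, that $\partial_F G$ \dots has trivial point stabilisers'' -- is essentially the statement being proved; for non-discrete $G$ the Furstenberg boundary cannot be analyzed directly in this way, and the paper's proof deliberately never touches it.
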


Note that the piecewise minimal-strongly-proximal property of the $G$-action  on $X$ implies in particular that $X$ is a $G$-boundary, and also that this action is very far from being free. Hence the meaning of Theorem~\ref{thm:intro} is that the existence of a $G$-boundary that is non-free and satisfies a certain strong compressibility property, ensures the existence of another $G$-boundary where the $G$-action is free.

Although the reformulation of Problem~\ref{prob-intro} in terms of confined subgroups is helpful, we emphasize that the Chabauty space $\sub(G)$ and its  $G$-invariant closed subspaces are typically delicate to describe. Moreover, the general properties of the space $\sub(G)$ are often more subtle in the case of non-discrete groups (see for instance \cite[\S 20.1]{Cap-Mon-problemlist}; see also \cite[\S1.2]{FracGela} for a remarkable recent result ensuring that, in a simple Lie group of rank~$\geq 2$, every discrete confined subgroup is a lattice). 
 In the case of discrete groups, Theorem~\ref{thm:intro} is already known \cite[Cor. 3.6]{LBMB-sub-dyn}. It is actually consequence of the following more general result: if $G$ is a discrete group and $X$ a faithful $G$-space, then for every confined subgroup $H$ of $G$, there exists a non-empty open subset $\alpha$ of $X$ such that $H$ contains the commutator subgroup of $R_G(\alpha)$ \cite[Th. 1.1]{LBMB-comm-lemma}. Here the assumption made in Theorem \ref{thm:intro} implies that $R_G(\alpha)$ is non-amenable, so in this situation it follows in particular that $H$ is not amenable either.  However it is worth noting that, as shown by classical examples, the stronger conclusion of \cite[Th. 1.1]{LBMB-comm-lemma} completely fails for non-discrete groups; see Section \ref{sec-loc-bndry}.

 Examples of groups to which Theorem \ref{thm:intro} applies are the \textbf{Neretin groups} $\mathcal N_{d,k}$ of almost automorphisms of quasi-regular tree $\mathcal T_{d, k}$. The groups $\mathcal N_{d,k}$ are non-discrete, compactly generated, simple, totally disconnected locally compact groups  \cite[\S6.3]{CapDM}. We refer to \cite{GarnLaza}  for details. The groups $\mathcal N_{d,k}$ can be defined as groups of homeomorphisms of the space of ends $\partial \mathcal T_{d, k}$, and the action of $\mathcal N_{d,k}$ on $\partial \mathcal T_{d, k}$ is piecewise minimal-strongly-proximal. The following result is a direct consequence of Theorem~\ref{thm:intro}. 
 
 \begin{cor}\label{cor:intro:Neretin-confined}
 For all integers $d, k \geq 2$, the Neretin group $\mathcal N_{d,k}$ does not have any confined relatively amenable subgroup. In particular $\mathcal N_{d,k}$ does not have any cocompact amenable subgroup. 
 \end{cor}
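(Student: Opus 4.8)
The plan is to verify that the Neretin group $\mathcal N_{d,k}$, acting on the space of ends $\partial \mathcal T_{d,k}$ of the quasi-regular tree, satisfies all the hypotheses of Theorem~\ref{thm:intro}, and then to read off both assertions. First I would recall the basic structure: $\partial \mathcal T_{d,k}$ is a Cantor set, hence a totally disconnected compact space, and its topology has a basis of \emph{shadows} $\partial \mathcal T_v$ (the ends passing through a given half-tree), each of which is clopen, so that every non-empty clopen subset $\alpha$ is a finite disjoint union of shadows. Since by definition an element of $\mathcal N_{d,k}$ is a homeomorphism of $\partial \mathcal T_{d,k}$ induced by an isomorphism between the complements of two finite subtrees, the action on $\partial \mathcal T_{d,k}$ is faithful, which disposes of the faithfulness hypothesis of Theorem~\ref{thm:intro} for free.

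The crux is to check that the action is piecewise minimal-strongly-proximal. Fixing a non-empty clopen $\alpha$, I would first identify the rigid stabilizer $R_G(\alpha) = \Fix_G(\partial \mathcal T_{d,k} \setminus \alpha)$: it contains every almost automorphism supported inside $\alpha$, and, after writing $\alpha$ as a finite union of shadows, such elements act on $\alpha$ as the full group of almost automorphisms of the corresponding finite forest of rooted regular trees. I would then show that this action on $\alpha$ is minimal and strongly proximal by establishing the stronger property of \emph{extreme proximality}: given any proper clopen $C \subsetneq \alpha$ and any non-empty clopen $U \subseteq \alpha$, there is $g \in R_G(\alpha)$ with $g(C) \subseteq U$. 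This is a routine combinatorial move — collapse the finite subtree cutting out $C$ and re-graft it inside a shadow contained in $U$ — available precisely because almost automorphisms may permute and rescale shadows freely while fixing the complement of $\alpha$. Minimality follows since point-orbits are dense, and extreme proximality of a minimal action forces strong proximality: any probability measure can be pushed to concentrate on an arbitrarily small shadow, so its orbit closure meets the Dirac measures.

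With the hypotheses verified, Theorem~\ref{thm:intro} immediately yields that $\mathcal N_{d,k}$ has no relatively amenable confined subgroup. For the final assertion, suppose $P \leq \mathcal N_{d,k}$ were a cocompact amenable subgroup; replacing $P$ by its closure, which is again amenable and cocompact, we may assume $P$ is closed, so that $P \in \sub(\mathcal N_{d,k})$. Then $P$ is relatively amenable, being amenable, and since $\mathcal N_{d,k}$ is non-compact we have $P \neq \langle e \rangle$ (otherwise $\mathcal N_{d,k} = \mathcal N_{d,k}/P$ would be compact). Cocompactness makes $G/N_G(P)$ a continuous image of the compact space $G/P$, so the conjugacy class of $P$ in $\sub(\mathcal N_{d,k})$ is compact, hence closed; as it avoids the trivial subgroup, $P$ is confined, contradicting the first assertion.

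I expect the only genuine work to lie in the verification of strong proximality of the rigid-stabilizer actions; faithfulness, total disconnectedness, and the deduction of the cocompact statement are all formal. The subtlety there is purely to confirm that, after restricting to the support $\alpha$, one still has enough almost automorphisms to realize the required compressions, which is exactly the self-similar nature of $\mathcal N_{d,k}$.
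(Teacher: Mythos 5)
Your proposal is correct, and its overall skeleton matches the paper's: check that the $\mathcal N_{d,k}$-action on $\partial\mathcal T_{d,k}$ is faithful and piecewise minimal-strongly-proximal, apply Theorem~\ref{thm:intro}, and obtain the second assertion from the observation that a non-trivial closed cocompact subgroup has compact (hence closed) conjugacy class in $\sub(G)$ and is therefore confined --- the paper makes exactly this last observation in its introduction rather than in the proof of the corollary, so your write-up of that step is actually more complete than the paper's. The only genuine divergence is in the proof of piecewise minimal-strong-proximality (the paper's Lemma~\ref{lem:StrongProx}): the paper identifies $\rist_G(\alpha)\cong\mathcal N_{d,n}$ acting on $\alpha\cong\partial\mathcal T_{d,n}$, thereby reducing to the global action, and obtains strong proximality there from proximality (which it gets from $n$-transitivity) plus the existence of one compressible open set, invoking Margulis's criterion; you instead establish extreme proximality of each rigid-stabilizer action (every proper clopen $C\subsetneq\alpha$ compresses into every non-empty clopen $U\subseteq\alpha$) and invoke the Glasner-type implication that a minimal extremely proximal action on an infinite compact space is strongly proximal. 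Both routes are sound, but two points of your sketch should be made explicit: (a) the regrafting sending $C$ into $U$ must also match $\alpha\setminus C$ with $\alpha\setminus g(C)$ by an almost-isomorphism of forests, which works precisely because both complements are non-empty and the number of shadows is preserved modulo $d-1$; (b) the passage from extreme proximality to strong proximality needs either a citation or one more line of argument: given $\mu$, choose an end that is not an atom of mass $\geq\varepsilon$, so that some proper clopen set has $\mu$-mass $>1-\varepsilon$, and compress such sets into a shrinking base of clopen neighbourhoods of a fixed end. Your route yields the stronger, independently useful property of extreme proximality and is self-contained; the paper's route is shorter because the self-similarity isomorphism $\rist_G(\alpha)\cong\mathcal N_{d,n}$ reduces everything to a single case.
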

 
 To the best of our knowledge, the Neretin group  $\mathcal N_{d,k}$ is the first known example of a non-discrete compactly generated simple locally compact group acting freely on its Furstenberg boundary. It is likely that Theorem~\ref{thm:intro} will apply to many other  simple groups. 
 
Using that result, we  establish the following representation theoretic property of the Neretin group.

\begin{thm}\label{thm:nonType1}
For all integers $d, k \geq 2$, the Neretin group $\mathcal N_{d, k}$ is not a type~I group.
\end{thm}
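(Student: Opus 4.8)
The plan is to produce a pair of inequivalent irreducible unitary representations of $\mathcal{N}_{d,k}$ that are weakly equivalent, and then to invoke the classical obstruction to type~I coming from this situation. Recall that for a second countable locally compact group of type~I the canonical map from the unitary dual onto the primitive ideal space of $C^\ast(\mathcal{N}_{d,k})$ is a bijection (Glimm); in particular two irreducible representations with the same kernel in $C^\ast(\mathcal{N}_{d,k})$---that is, two weakly equivalent irreducible representations---are necessarily unitarily equivalent. Hence it suffices to exhibit two inequivalent irreducible representations sharing the same $C^\ast$-kernel (a whole family will in fact emerge).

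I would look for these among the boundary representations attached to the action of $G = \mathcal{N}_{d,k}$ on $X = \partial\mathcal{T}_{d,k}$. Fix a quasi-invariant probability measure $\nu$ on $X$ in the natural class and let $c(g,x) = \frac{d\,g_\ast\nu}{d\nu}(x)$ be its Radon--Nikodym cocycle. For a complex parameter $s$ set
\[
(\pi_s(g)\xi)(x) = c(g,x)^{s}\,\xi(g^{-1}x), \qquad c(g,x) = \frac{d\,g_\ast\nu}{d\nu}(x), \quad \xi \in L^2(X,\nu).
\]
Since $c(g,\cdot)$ is a positive cocycle, $\pi_s$ is a representation for every $s$, and it is unitary whenever $\mathrm{Re}(s) = \tfrac12$, giving an analytic family $\{\pi_s\}_{s \in \frac12 + i\mathbb{R}}$ on $L^2(X,\nu)$.

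Two properties of this family must be checked. First, each $\pi_s$ is irreducible: identifying the commutant of $\pi_s$ with the $G$-invariant part of $L^\infty(X \times X, \nu\times\nu)$, double ergodicity of the boundary action---a robust feature of minimal strongly proximal actions---forces any invariant element to be concentrated on the diagonal, and minimality of $G \acts X$ then forces it to be scalar; this is the standard boundary-representation mechanism in the spirit of Bader--Muchnik. Second, the $\pi_s$ are pairwise inequivalent: fixing a compact open subgroup $K \leq G$, the spherical vector of $\pi_s$ is an eigenvector for the commutative Hecke algebra $C_c(K\backslash G/K)$, and the associated character is a non-constant analytic function of $s$, so distinct values of $s$ (up to the obvious symmetries of the family) yield inequivalent representations.

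The decisive step, and the point where Corollary~\ref{cor:intro:Neretin-confined} enters, is to show that all the $\pi_s$ are weakly equivalent. On one side, the $G$-action on $X$ is topologically amenable, so each $\pi_s$ is weakly contained in the regular representation $\lambda_G$. The reverse containment $\lambda_G \preceq \pi_s$ is the crux and the main obstacle: it does not follow formally from amenability and must genuinely exploit the dynamical freeness. I would attack it by analysing the matrix coefficients $g \mapsto \langle \pi_s(g)\xi, \xi\rangle$, using minimality and strong proximality of the boundary action to drive these coefficients towards the behaviour of coefficients of $\lambda_G$, while using the absence of confined relatively amenable subgroups (Corollary~\ref{cor:intro:Neretin-confined}, equivalently the freeness of $G \acts \partial_F G$ from Theorem~\ref{thm:intro}) to exclude the invariant states---supported on small subgroups---that would otherwise obstruct the approximation. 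Concretely, this should prevent $\pi_s$ from factoring through a proper quotient of $C^\ast_{\mathrm{red}}(G)$, forcing $\ker_{C^\ast(G)}\pi_s = \ker_{C^\ast(G)}\lambda_G$ for every $s$. Granting this, all the $\pi_s$ share a common $C^\ast$-kernel, so any two of them are irreducible, pairwise inequivalent, and weakly equivalent, which by the first paragraph is incompatible with $\mathcal{N}_{d,k}$ being of type~I.
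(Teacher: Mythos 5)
Your overall strategy---two inequivalent irreducible representations that are weakly equivalent, plus Glimm's theorem---is exactly the paper's, but your implementation via the analytic family $\pi_s$ of boundary representations breaks down at its two decisive points. First, the ``easy'' containment $\pi_s \preceq \lambda_G$ rests on the claim that the $\mathcal N_{d,k}$-action on $\partial\mathcal T_{d,k}$ is topologically amenable. This is false: the stabilizer of an end $\xi$ contains $\rist_G(\alpha)$ for every clopen $\alpha$ with $\xi \notin \alpha$, and these rigid stabilizers are themselves isomorphic to Neretin groups $\mathcal N_{d,n}$, hence non-amenable; a topologically amenable action would force every point stabilizer to be amenable. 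Second, you yourself flag the reverse containment $\lambda_G \preceq \pi_s$ as ``the crux,'' and what you offer for it is a hope, not a proof: nothing in your argument converts the non-confinement statement of Corollary~\ref{cor:intro:Neretin-confined} into a weak-containment statement about the representations $\pi_s$. (The irreducibility and pairwise-inequivalence claims for $\pi_s$ are also unsubstantiated for this group---double ergodicity of $\nu\times\nu$ and commutativity of the Hecke algebra $C_c(K\backslash G/K)$ are not established for Neretin groups---but these are secondary to the two gaps above.)

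The paper's proof is engineered precisely so that such a conversion is available. The mechanism is Fell's continuity theorem (Proposition~\ref{prop:Fell}): if the Chabauty closure of the conjugacy class of a closed subgroup $H$ contains $\{e\}$, then, since conjugates of $H$ give representations equivalent to $\lambda_{G/H}$, Fell's theorem yields $\lambda_G \preceq \lambda_{G/H}$. This mechanism applies only to \emph{quasi-regular} representations, which is why the paper works with $\lambda_{G/O}$ and $\lambda_{G/O'}$ for two explicit open amenable subgroups: $O = \mathcal O_{d,k}$ (the stabilizer of the canonical $\Aut(\mathcal T_{d,k})$-invariant measure $\nu$) and $O' = N_O(P)$, where $P$ is the product of the rigid stabilizers of the ends below the root's children. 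Relative amenability gives $\lambda_{G/O} \preceq \lambda_G$; non-confinement (Corollary~\ref{cor:intro:Neretin-confined}) plus Fell gives $\lambda_G \preceq \lambda_{G/O}$; and Mackey's criteria (self-commensuration of the open subgroup for irreducibility, infiniteness of orbits for inequivalence) settle the remaining points by concrete computations inside $G$. To salvage your approach you would have to replace the boundary representations $\pi_s$ by quasi-regular representations attached to suitable open subgroups---which is essentially to rediscover the paper's argument.
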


This answers negatively Question~1.4(2) from \cite{Neretin}. We recall that a locally compact group $G$ is of \textbf{type I} is for every unitary representation $\pi$, the von Neumann algebra $\pi(G)''$ is of type I. By Glimm's theorem \cite{Glimm1961}, a second countable group $G$ is of type I if and only if any two weakly equivalent irreducible unitary representations of $G$ are unitarily equivalent. We refer to \cite{Dixmier} and \cite{BekHar} for  detailed expositions.

Let us also mention that Y.~Neretin  has proved in \cite[Th.~1.2]{Neretin} that the group $ \mathcal N_{d, d+1}$ has an open subgroup $A$ such that $(\mathcal N_{d, d+1}, A)$ forms a generalized Gelfand pair.
 Since $\mathcal N_{d, d+1}$ admits no cocompact amenable subgroup by Corollary~\ref{cor:intro:Neretin-confined}, we deduce that N.~Monod's result  on Gelfand pairs  \cite{Monod} cannot be extended to generalized Gelfand pairs, even among simple groups (see Remark~\ref{rem:Gelfand}). 
 
Motivated by the understanding of the confined subgroups of $\mathcal N_{d, k}$, we also provide a complete classification of the closed cocompact subgroups of $\mathcal N_{d, k}$, inspired by \cite{BCGM}. This classification says that there are as few proper cocompact subgroups in $\mathcal N_{d, k}$ as one might hope: any such subgroup is a finite index open subgroup of the stabilizer of an end $\xi \in \partial \mathcal T_{d, k}$, see Theorem~\ref{thm:cocoNer}. This description notably implies that $\mathcal N_{d, k}$ is an isolated point of its Chabauty space (see Corollary~\ref{cor:Neretin=Chab-isol}). This last phenomenon contrasts with the case of the automorphism group of a regular tree; see Remark \ref{rmk-aut-non-isolated}. 
 
 \bigskip
 
  The proof of Theorem \ref{thm:intro} is presented in Section \ref{sec-loc-bndry}. It is fairly elementary. The argument actually establishes non-confinement for an appropriate class of subgroups of $G$, which properly contains the class of relatively amenable subgroups. It uses in an essential way that the fixator $G_F$ of a finite subset $F$ of $X$ in $G$ admits a natural subgroup $J$ that is commensurated in $G_F$, and which is an exhaustion of subgroups that are built up from a compact open subgroup of $G_F$ and a certain rigid stabilizer in $G$. The proof consists in combining general considerations on confined subgroups (Lemma \ref{lem-conf-fixedpts} and Proposition \ref{prop-H-not-intersect-open}) together with an approximation argument at the level of the above subgroup $J$. The application to the Neretin groups and the description of the closed cocompact subgroups of the Neretin groups are given in Section \ref{sec-neretin}.
 
\subsection*{Acknowledgement}

We thank Sven Raum for his comments on an earlier version of this paper. This work was supported by ANR-19-CE40-0008-01 AODynG.

\section{Preliminaries}

Let $G$ be a locally compact group. The Chabauty space of closed subgroups of $G$, which is compact, is denoted by $\sub(G)$. We refer to \cite[Ch.~VIII, \S5]{Bourbaki_Int7-8} for a description of some of its basic properties. 

Recall that if $X$ is a compact $G$-space, the stabilizer map $X \to \sub(G)$, $x \mapsto G_x$, is upper semi-continuous, meaning that for every net $(x_i)$ in $X$ converging to $x$ and such that $(G_{x_i})$ converges to $L$ in $\sub(G)$, one has $L \leq G_x$.

Recall that a closed subgroup $H$ of a locally compact group $G$ is \textbf{relatively amenable} in $G$ if $H$ fixes a probability measure on every compact $G$-space \cite{Cap-Mon-RelMoy}. The following is well-known (it is implicit in \cite{Cap-Mon-RelMoy}, and also appears in \cite{Monod}). 

\begin{prop}
A subgroup $H$ of $G$ is relatively amenable in $G$ if and only if $H$ fixes a probability measure on $\partial_F G$.
\end{prop}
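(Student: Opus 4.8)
The forward implication is immediate from the definition of relative amenability, since $\partial_F G$ is itself a compact $G$-space; all the content lies in the converse. So the plan is to assume that $H$ fixes some $\nu \in \mathrm{Prob}(\partial_F G)$, to fix an arbitrary compact $G$-space $X$, and to produce an $H$-invariant measure on $X$ by transporting $\nu$ from $\partial_F G$ into $\mathrm{Prob}(X)$ along a $G$-equivariant map and then averaging.

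The key step I would isolate is the construction, for every compact $G$-space $X$, of a continuous $G$-equivariant map $\Phi \colon \partial_F G \to \mathrm{Prob}(X)$. To obtain it, I would regard $\mathrm{Prob}(X)$ as a compact convex $G$-space with affine $G$-action and, by a Zorn's lemma argument applied to the family of non-empty closed convex $G$-invariant subsets of $\mathrm{Prob}(X)$ (a descending chain of such sets has non-empty intersection by compactness, and the intersection is again closed, convex and $G$-invariant), extract a minimal such subset $Q \subseteq \mathrm{Prob}(X)$. By the structure theory of irreducible compact convex $G$-flows, the closure $B := \overline{\mathrm{ext}(Q)}$ of the set of extreme points of $Q$ is a $G$-boundary, and by construction $B \subseteq Q \subseteq \mathrm{Prob}(X)$. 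The universal property of the Furstenberg boundary then furnishes a continuous $G$-equivariant surjection $\partial_F G \to B$; composing it with the inclusion $B \hookrightarrow \mathrm{Prob}(X)$ gives the desired map $\Phi$.

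Granting $\Phi$, I would define $\mu \in \mathrm{Prob}(X)$ by $\mu(f) = \int_{\partial_F G} \Phi(b)(f)\, d\nu(b)$ for every $f \in C(X)$; equivalently, $\mu$ is the barycenter of the pushforward $\Phi_\ast \nu \in \mathrm{Prob}(\mathrm{Prob}(X))$, and this barycenter depends continuously and affinely on its argument. Since $\Phi$ is $G$-equivariant and $\nu$ is $H$-invariant, a routine change of variables shows that $\mu$ is $H$-invariant, so $H$ fixes a probability measure on $X$; as $X$ was arbitrary, $H$ is relatively amenable. The main obstacle is precisely the construction of $\Phi$: one must guarantee that $\mathrm{Prob}(X)$ contains a $G$-boundary (realized inside it as the closure of the extreme points of a minimal convex invariant subset) onto which $\partial_F G$ maps, which is where the universality of $\partial_F G$ and the classical description of irreducible affine flows are essential; the averaging step afterwards is then purely formal.
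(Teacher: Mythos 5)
Your proof is correct, but there is essentially nothing in the paper to compare it against: the paper states this proposition without proof, remarking only that it is well-known, implicit in \cite{Cap-Mon-RelMoy} and appearing in \cite{Monod}. What you have written is in substance the standard argument from that literature, so you have supplied the proof the paper delegates to its references. The forward direction is, as you say, immediate. For the converse, your chain of ingredients is sound: Zorn's lemma (with weak$^\ast$-compactness handling descending chains) yields a minimal non-empty closed convex $G$-invariant subset $Q \subseteq \mathrm{Prob}(X)$; Glasner's structure theorem for irreducible affine flows --- the one genuinely non-elementary input, which you correctly identify as the crux --- shows that $B = \overline{\mathrm{ext}(Q)}$ is a $G$-boundary; universality of $\partial_F G$ gives the continuous $G$-equivariant map $\Phi \colon \partial_F G \to \mathrm{Prob}(X)$; and the barycenter of $\Phi_\ast \nu$ is then an $H$-fixed point of $\mathrm{Prob}(X)$, the change-of-variables computation using only equivariance of $\Phi$, affineness of the action on $\mathrm{Prob}(X)$, and $H$-invariance of $\nu$. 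Two minor points would be worth making explicit in the non-discrete locally compact setting: first, that the $G$-action on $\mathrm{Prob}(X)$ with the weak$^\ast$ topology is jointly continuous, so that $\mathrm{Prob}(X)$ is a genuine compact convex $G$-space and the affine-flow machinery applies; second, that strong proximality of $B$ as a subset of the affine flow $Q$ does give strong proximality of $B$ as a $G$-space in its own right (weak$^\ast$ limits of measures supported on the closed set $B$ remain supported on $B$). Both are standard and harmless, so the argument stands as a complete substitute for the citation.
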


A closed subgroup $H \leq G$ is called \textbf{confined} if the closure of its conjugacy class in $\sub(G)$ avoids the trivial subgroup $\{e\}$.

\begin{prop}\label{prop:rel-amen-confined}
Let $G$ be a second countable locally compact group. Then there exists a $G$-boundary on which the action is topologically free if and only if no relatively amenable subgroup of $G$ is confined.
\end{prop}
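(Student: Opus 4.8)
The plan is to prove the equivalence between the existence of a topologically free $G$-boundary and the non-confinement of relatively amenable subgroups, exploiting the fact that under second countability we may work with the Furstenberg boundary $\partial_F G$ concretely. First I would recall from the preceding proposition that a subgroup $H$ is relatively amenable if and only if it fixes a probability measure $\mu$ on $\partial_F G$. The key geometric tool will be the stabilizer URS associated to a $G$-boundary: for a minimal compact $G$-space $X$, the closure in $\sub(G)$ of the set of point stabilizers $\{G_x : x \in X\}$ contains a unique minimal $G$-invariant closed subset, the \emph{stabilizer URS} $\mathcal{S}_X$. The action on $X$ is topologically free precisely when $\mathcal{S}_X$ is the trivial URS $\{\langle e\rangle\}$, since upper semi-continuity of the stabilizer map forces the points of dense trivial-stabilizer type to produce $\langle e\rangle$ in the closure.

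For the forward direction, I would argue the contrapositive: suppose some relatively amenable $H$ is confined. Then $H$ fixes $\mu \in \mathrm{Prob}(\partial_F G)$, and I want to contradict topological freeness of \emph{any} $G$-boundary $X$. Given a topologically free $G$-boundary $X$, it is a factor of $\partial_F G$, so $H$ fixes the pushforward measure on $X$. Using strong proximality of $X$, the $G$-orbit closure of this measure contains a Dirac mass $\delta_x$; taking a net $g_i$ with $g_i \mu \to \delta_x$, the subgroups $g_i H g_i^{-1}$ fix $g_i\mu$, and one extracts a Chabauty limit $L$ that fixes $\delta_x$, hence $L \leq G_x$. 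Topological freeness combined with minimality lets one choose $x$ so that $G_x$ is arbitrarily small in the Chabauty sense — more precisely, the stabilizer URS being trivial forces the accumulation of conjugates of $H$ at $\langle e\rangle$, contradicting confinement. Conversely, if no relatively amenable subgroup is confined, then in particular the stabilizer URS $\mathcal{S}_{\partial_F G}$ consists of relatively amenable subgroups (point stabilizers of a boundary action always fix the appropriate measure), so no nontrivial member is confined, which by minimality of a URS forces $\mathcal{S}_{\partial_F G} = \{\langle e\rangle\}$; this is exactly topological freeness of $\partial_F G$, furnishing the required boundary.

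The main technical obstacle is the passage from a \emph{single} confined relatively amenable subgroup to a \emph{nontrivial uniformly recurrent subgroup} whose members are relatively amenable, and back, all within the non-discrete setting where Chabauty limits and measure-theoretic arguments must be handled carefully. Concretely, the delicate point is verifying that the members of the stabilizer URS of a boundary are relatively amenable: one uses that $G_x$ fixes $\delta_x$ and that relative amenability is inherited along Chabauty limits of conjugates, which requires the upper semi-continuity of the stabilizer map recalled in the Preliminaries together with closedness of the relatively amenable locus in $\sub(G)$. The second countability hypothesis is what allows the replacement of nets by sequences and guarantees metrizability of $\sub(G)$ and of $\mathrm{Prob}(\partial_F G)$, so that these extraction arguments go through; I would flag precisely where it is used, anticipating the parallel discussion in Remark~\ref{rem:2nd-countable}. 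Once these inheritance and closedness facts are in place, the equivalence reduces to the standard dictionary between triviality of a URS and topological freeness, which is routine.
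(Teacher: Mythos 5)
Your proposal is correct and follows essentially the same route as the paper's proof: both rest on the characterization of relative amenability via fixed measures on $\partial_F G$, on strong proximality plus upper semi-continuity of stabilizers for the forward implication, and, for the converse, on second countability giving metrizability of $\sub(G)$, Kuratowski's theorem producing continuity points of the stabilizer map on $\partial_F G$, and the Glasner--Weiss result that the orbit closure of such a stabilizer is a URS. The only cosmetic difference is that you argue the converse directly (non-confinement of relatively amenable subgroups forces the stabilizer URS of $\partial_F G$ to be trivial), whereas the paper argues the contrapositive (a non-topologically-free $\partial_F G$ yields a non-trivial, relatively amenable, confined point stabilizer).
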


\begin{proof}
The `only if' part is a fairly direct consequence of the definitions; it holds regardless of the second countability assumption. Suppose conversely that the $G$-action on each of the $G$-boundaries is not topologically free. In particular, this is true for the Furstenberg boundary $\partial_F G$. Since $G$ is second countable, the space $\sub(G)$ is metrizable, and it follows from semi-continuity that there exists a point $z \in \partial_F G$ at which the stabilizer map $\partial_F G \to \Sub(G) : x \mapsto G_x$, is continuous \cite[Th.~VII]{Kuratowski1928}. By hypothesis, the stabilizer $G_z$ is relatively amenable and non-trivial. By  \cite[Prop.~1.2]{GW}, the closure of the conjugacy class of $G_z$ in $\Sub(G)$ is a URS. In particular $G_z$ is confined. 
\end{proof}

\begin{rmk}\label{rem:2nd-countable}
The proposition remains true regardless of the second countability assumption. Indeed, in view of the main results of \cite{LB-Tsankov}, for every locally compact group $G$,  the stabilizer map $\partial_F G \to \Sub(G)$ is continuous everywhere. 
\end{rmk}

\section{The proof of Theorem \ref{thm:intro}} \label{sec-loc-bndry}

Let $X$ be a compact $G$-space. Given a subset $\alpha \subseteq X$, we define the \textbf{rigid stabilizer} of $\alpha$ in $G$ as the pointwise fixator of the complement $X \setminus \alpha$. It is denoted by
$$\rist_G(\alpha) = \Fix_G(X \setminus \alpha).$$ 
Let us now assume that $X$ is totally disconnected. We say that the $G$-action on $X$ is \textbf{piecewise minimal} (resp. \textbf{piecewise strongly proximal}) if for every non-empty clopen set $\alpha \subseteq X$, the action of the rigid stabilizer $\rist_G(\alpha)$ on $\alpha$ is minimal (resp. strongly proximal). If the $G$-action has both properties, we say that the $G$-action is \textbf{piecewise minimal-strongly-proximal}; we shall focus on that situation.

The criterion of non-confinement we shall establish in order to prove Theorem \ref{thm:intro} is Theorem \ref{thm:S_X} below. It requires the following definition.

	\begin{defin}
			Let $G$ be a locally compact and $X$ be a  totally disconnected compact $G$-space. We denote by  $\mathcal{S}_X$  the subset of $\sub(G)$ consisting of those closed subgroups $H \leq G$ such that for every non-empty clopen subset $\alpha$ of $X$, the stabilizer of $\alpha$ in $H$ fixes a probability measure on $\alpha$. 
		\end{defin}
		
We observe that  $\mathcal{S}_X$ indeed contains all relatively amenable subgroups of $G$. 
		
		\begin{lem} \label{lem-rel-am-fix-measure-locally}
			Let $G$ be a locally compact and $X$ be a  totally disconnected compact $G$-space. Every relatively amenable closed subgroup of $G$ is contained in $\mathcal{S}_X$.
		\end{lem}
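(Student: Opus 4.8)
The plan is to fix a non-empty clopen subset $\alpha\subseteq X$ and produce a probability measure on $\alpha$ that is invariant under the setwise stabilizer $\stab_H(\alpha)=H\cap\stab_G(\alpha)$; since $\alpha$ is arbitrary, this is exactly the condition $H\in\mathcal S_X$. The strategy is to realise $\alpha$ as a compact space acted on by the \emph{open} subgroup $G_\alpha:=\stab_G(\alpha)$, and then to show that relative amenability is inherited by the intersection of $H$ with an open subgroup. First I would check that $G_\alpha$ is open. Since the action map $G\times X\to X$ is continuous and $\alpha$ is open, the set $\{(g,x):gx\in\alpha\}$ is open in $G\times X$; for $g_0\in G$ with $g_0\alpha\subseteq\alpha$ it contains the compact tube $\{g_0\}\times\alpha$, so the tube lemma yields an open $W\ni g_0$ with $W\alpha\subseteq\alpha$. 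Thus $\{g:g\alpha\subseteq\alpha\}$ is open, and since $g\mapsto g^{-1}$ is a homeomorphism, $G_\alpha=\{g:g\alpha\subseteq\alpha\text{ and }g^{-1}\alpha\subseteq\alpha\}$ is open as well. Being clopen in the compact space $X$, the set $\alpha$ is then a compact $G_\alpha$-space.

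The core step is the general fact that if $H$ is relatively amenable in $G$ and $U\le G$ is open, then $H\cap U$ is relatively amenable in $U$. To prove it, let $Z$ be an arbitrary compact $U$-space and form the co-induced $G$-space $\widetilde Z=\mathrm{CoInd}_U^G(Z)=\{\phi:G\to Z : \phi(ug)=u\cdot\phi(g)\ \forall u\in U,\ g\in G\}$, with $G$ acting by $(g_0\cdot\phi)(g)=\phi(gg_0)$. Choosing coset representatives identifies $\widetilde Z$ with the product $Z^{U\backslash G}$, which is compact Hausdorff by Tychonoff; because $U$ is open the quotient $U\backslash G$ is discrete, and one checks that the resulting twisted coordinate action of $G$ is jointly continuous, so $\widetilde Z$ is a genuine compact $G$-space. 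Relative amenability of $H$ then provides an $H$-invariant $m\in\mathrm{Prob}(\widetilde Z)$. The evaluation $\mathrm{ev}:\widetilde Z\to Z,\ \phi\mapsto\phi(e)$, is continuous (a coordinate projection) and $(H\cap U)$-equivariant: for $h\in H\cap U$ the defining relation gives $\mathrm{ev}(h\cdot\phi)=(h\cdot\phi)(e)=\phi(h)=h\cdot\phi(e)=h\cdot\mathrm{ev}(\phi)$. Hence the pushforward $\mathrm{ev}_*m\in\mathrm{Prob}(Z)$ is invariant under $H\cap U$, which proves the fact.

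Applying this with $U=G_\alpha$ and $Z=\alpha$, the subgroup $H\cap G_\alpha=\stab_H(\alpha)$ is relatively amenable in $G_\alpha$, hence fixes a probability measure on the compact $G_\alpha$-space $\alpha$; as $\alpha$ was arbitrary, $H\in\mathcal S_X$. I expect the main obstacle to be the co-induction step, namely verifying that $\widetilde Z$ is a compact $G$-space — that is, that the twisted action on $Z^{U\backslash G}$ is jointly continuous, which is precisely where openness of $U$ is used — and setting up the evaluation map so that its equivariance is with respect to exactly $H\cap U$. By contrast, the openness of $G_\alpha$ and the pushforward of the invariant measure are routine.
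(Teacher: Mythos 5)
Your proposal is correct, and its overall structure is exactly that of the paper's proof: fix a non-empty clopen $\alpha$, observe that $\stab_G(\alpha)$ is open, deduce that $\stab_H(\alpha)=H\cap\stab_G(\alpha)$ is relatively amenable in $\stab_G(\alpha)$, and conclude that it fixes a probability measure on the compact $\stab_G(\alpha)$-space $\alpha$. The one genuine difference is how the inheritance step is handled: the paper simply cites \cite[Lemma 11]{Cap-Mon-RelMoy} for the fact that relative amenability passes to intersections with open subgroups, whereas you prove this fact from scratch by co-induction. Your argument for it is sound: $\widetilde Z$ is closed in $Z^G$ (equivalently, homeomorphic to $Z^{U\backslash G}$), hence compact, and the joint continuity you flag as the main obstacle does hold precisely because $U$ is open --- for fixed $g$, writing $gg_i=u_i(gg_0)$ with $u_i=gg_ig_0^{-1}g^{-1}\to e$, one has $u_i\in U$ eventually, so $\phi_i(gg_i)=u_i\phi_i(gg_0)\to\phi(gg_0)$ by continuity of the $U$-action on $Z$; the evaluation map is then $(H\cap U)$-equivariant as you compute, and pushing forward the $H$-invariant measure finishes the argument. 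What the citation buys the paper is brevity; what your route buys is self-containedness (you have essentially reconstructed the standard proof of the cited lemma), at the cost of the co-induction verification. Your tube-lemma proof that $\stab_G(\alpha)$ is open, which the paper asserts without comment, is also correct.
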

		
		\begin{proof}
			Let $H$ be subgroup of $G$ that is amenable relative to $G$.  If $\alpha$ is a non-empty clopen subset of $X$, then $\stab_G(\alpha)$ is open in $G$, and therefore $\stab_H(\alpha)$ is  amenable relative to $\stab_G(\alpha)$ \cite[Lemma 11]{Cap-Mon-RelMoy}. So  $\stab_H(\alpha)$ fixes a probability measure on $\alpha$, and $H \in \mathcal{S}_X$. 
		\end{proof}

In view Lemma~\ref{lem-rel-am-fix-measure-locally}, we see that Theorem \ref{thm:intro}  is an immediate consequence of the following, which is the main result of this section. Here and in the rest of this paper, we user the abbreviation \textbf{tdlc} for \textit{totally disconnected locally compact}.
		
		\begin{thm}\label{thm:S_X}
			Let $G$ be a tdlc group and $X$ be a totally disconnected compact $G$-space on which the $G$-action is  faithful and piecewise minimal-strongly-proximal. 
Then no subgroup $H \in \mathcal S_X$ is confined.  
		\end{thm}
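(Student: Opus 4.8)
The plan is to show directly that for every $H \in \mathcal S_X$ the trivial subgroup lies in the Chabauty-closure $\mathcal U$ of the conjugacy class of $H$; by definition this is exactly non-confinement. I would first record two soft facts about $\mathcal S_X$ that make such a limiting argument possible. The set $\mathcal S_X$ is invariant under conjugation (if $\stab_H(g^{-1}\alpha)$ fixes a measure on $g^{-1}\alpha$, then its conjugate fixes the pushforward on $\alpha$), and it is Chabauty-closed: given $H_i \to L$ and a clopen $\alpha$, any $h \in \stab_L(\alpha)$ is a limit of elements $h_i \in H_i$, and since $\stab_G(\alpha)$ is open these $h_i$ eventually lie in $\stab_{H_i}(\alpha)$ and fix the corresponding measures $\mu_i^\alpha$, so a weak$^\ast$-limit of the $\mu_i^\alpha$ is fixed by $\stab_L(\alpha)$. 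Consequently $\mathcal U \subseteq \mathcal S_X$, and it suffices to produce a single net of conjugates of $H$ converging to $\{e\}$.

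The starting point of the compression is that, taking $\alpha = X$ in the hypothesis, the action $G \acts X$ is itself minimal and strongly proximal, i.e. $X$ is a $G$-boundary, and every $H \in \mathcal S_X$ fixes a probability measure $\mu$ on $X$. Pushing $\mu$ towards a Dirac mass $\delta_\xi$ along the strongly proximal dynamics and passing to a Chabauty-limit of the corresponding conjugates, I obtain a subgroup $L \in \mathcal U \cap \mathcal S_X$ that fixes the point $\xi$; this is the mechanism behind the general lemma relating confined subgroups to fixators of points. More generally I would work with the fixator $G_F$ of a finite set $F \subseteq X$ together with its natural commensurated subgroup $J$: writing $F_n \downarrow F$ for a decreasing sequence of clopen neighbourhoods, the elements with trivial germ along $F$ form the normal subgroup $D_F = \bigcup_n \rist_G(X \setminus F_n)$ of $G_F$, and for a compact open subgroup $U \leq G_F$ one sets $J = U \cdot D_F$, an increasing union of subgroups built from $U$ and the rigid stabilizers $\rist_G(X \setminus F_n)$. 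Since $D_F$ is normal in $G_F$ and $U$ is compact open, $J$ is commensurated in $G_F$; this is the group on which the final approximation takes place.

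The engine that forces the limit down to $\{e\}$ is the tension between the two hypotheses. On one hand, for every clopen $\alpha$ and every $L \in \mathcal S_X$ the subgroup $L \cap \rist_G(\alpha)$ fixes a probability measure on $\alpha$. On the other hand, the action $\rist_G(\alpha) \acts \alpha$ is minimal and strongly proximal, hence admits no invariant probability measure when $\alpha$ has more than one point; so the measure-fixing subgroup $L \cap \rist_G(\alpha)$ can be compressed into arbitrarily small point-stabilizers by conjugating with suitable elements of $\rist_G(\alpha)$. Carrying out this compression over a family of clopen sets exhausting the relevant rigid stabilizers inside $J$, while using that the compact open part $U$ can be chosen inside arbitrarily small identity neighbourhoods, I would arrange conjugates of $L$ whose rigid-stabilizer contributions are pushed off every prescribed clopen piece and whose germ part becomes trivial. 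Taking a Chabauty-limit then yields $\{e\}$, proving that $H$ is not confined.

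The main obstacle is precisely that one cannot import the clean conclusion available for discrete groups, where a confined subgroup must contain the commutator subgroup of some rigid stabilizer (this fails for non-discrete $G$). Hence one cannot contradict the measure-fixing property in one stroke; instead the whole argument must be run as an approximation inside the single commensurated subgroup $J$, compressing measures piece by piece and, crucially, controlling the global Chabauty-limit coherently across all clopen subsets of $X$ simultaneously rather than one at a time. Ensuring that these compressions are mutually compatible — so that the support contributed by the rigid stabilizers genuinely disappears in the limit while the compact open part shrinks to the identity — is the delicate technical heart of the proof, and is where the commensuration of $J$ in $G_F$ and the faithfulness of the action are used.
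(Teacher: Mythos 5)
Your preliminary observations are correct and do match ingredients of the paper's argument: $\mathcal S_X$ is conjugation-invariant and Chabauty-closed (the paper uses this implicitly when it invokes membership in $\mathcal S_X$ for limits of conjugates), taking $\alpha = X$ shows every $H \in \mathcal S_X$ admits point-fixing Chabauty limits of conjugates, and the commensurated subgroup $J$ generated by a compact open subgroup $U$ and the germ-trivial part $G_F^0$ of the stabilizer of a finite set $F$ is exactly the object the paper builds. But the core of your proposal --- producing a net of conjugates of $H$ converging to $\{e\}$ by ``mutually compatible compressions'' --- is not an argument; it is a restatement of what must be proved, as you yourself concede when you defer the ``delicate technical heart''. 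Two concrete reasons this is a genuine gap rather than a routine completion. First, strong proximality only ever produces limits that \emph{fix points}, and fixing a point is perfectly compatible with being confined: in $\mathcal N_{d,k}$ the stabilizer $G_\xi$ of an end is cocompact, hence confined. Nothing in your sketch converts ``the limit fixes $\xi$'' into ``the limit is trivial''. Second, conjugation moves the whole group at once: if $h \in H$ lies in no rigid stabilizer and outside $J$ (for instance an element with non-trivial germs everywhere), then every conjugate $H^{g}$ contains $h^{g}$, and conjugating by elements of $\rist_G(X\setminus\alpha)$, chosen to push invariant measures to Dirac masses, gives no control whatsoever on where such elements go; for the limit to be trivial you would need \emph{every} net $h_i^{g_i}$ with $h_i \in H$ to leave every compact subset of $G$, and your compression mechanism simply does not see these elements.

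The paper never constructs such a net directly; it argues by contradiction, and the contradiction rests on two ingredients entirely absent from your proposal. (i) Lemma~\ref{lem-conf-fixedpts}: if a Chabauty-closed set of subgroups avoids $\{e\}$, there is a \emph{uniform} bound $r$ on the number of finite orbits of its members; this allows one to pick $L$ in the orbit closure with maximal finite-orbit set $F$, and (via commensuration of $J$ in $G_{(F)}$, Lemma~\ref{lem-commens-finite orbits}) to show that for $H \in \K$ the intersection $H \cap J$ has no finite orbit outside $F$ (Lemma~\ref{lem-intersect-finite-orbit}). (ii) Proposition~\ref{prop-H-not-intersect-open}, combined with the commutator lemma for normal subgroups and the Wu--Yu theorem on the topological FC-centre (Proposition~\ref{prop-B(G)-am}), shows that members of $\K$ meet $J$ \emph{non-trivially} (Lemma~\ref{lem-inter-J-non-trivial}); this is where faithfulness and the triviality of the amenable radical of $\rist_G(\alpha)$ are actually used. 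Only against this backdrop does the measure compression you describe acquire teeth: it produces $K \in \K$ with $K \cap J$ non-trivial and fixing a point $\xi \notin F$, contradicting (i). Without substitutes for these two lemmas --- that is, without some mechanism converting ``the orbit closure avoids $\{e\}$'' into quantitative dynamical information, and ruling out that the intersections with $J$ degenerate --- your scheme of compressing piece by piece and hoping the limits cohere cannot close, so the proposal as it stands does not prove the theorem.
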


The proof requires a few preparations. We recall the the \textbf{commensurator} of a subgroup $H$ of a group $G$, denoted by $\Comm_G(H)$, consists of those elements $g \in G$ such that the intersection $H \cap gHg^{-1}$ is of finite index both in $H$ and in $ gHg^{-1}$. We say that 
$H$ is a \textbf{commensurated} subgroup of $G$ if $\Comm_G(H) = G$.

\begin{lem} \label{lem-commens-finite orbits}
	Let $G$ be a group acting on a set $X$ and let $\Lambda$ be a commensurated subgroup of $G$. Then the union of the finite $\Lambda$-orbits in $X$ is a $G$-invariant subset of $X$.
\end{lem}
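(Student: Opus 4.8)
The plan is to prove directly that the set $Y$ of points of $X$ with finite $\Lambda$-orbit is stable under each element of $G$. So I would fix $x \in Y$ and $g \in G$, and aim to show that $gx$ again lies in $Y$. The first observation is that translating by $g^{-1}$ is a bijection of $X$, so it carries $\Lambda \cdot gx$ onto $(g^{-1} \Lambda g) \cdot x$; in particular $|\Lambda \cdot gx| = |(g^{-1} \Lambda g) \cdot x|$. Thus it suffices to bound the size of the orbit of $x$ under the conjugate subgroup $\Lambda' := g^{-1} \Lambda g$.

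The key step is to exploit the commensuration hypothesis. Since $\Lambda$ is commensurated and $g^{-1} \in G = \Comm_G(\Lambda)$, the intersection $M := \Lambda \cap \Lambda' = \Lambda \cap g^{-1}\Lambda g$ has finite index in both $\Lambda$ and $\Lambda'$. Because $M \leq \Lambda$, the orbit $M \cdot x$ is contained in $\Lambda \cdot x$, which is finite by assumption, so $M \cdot x$ is finite. Writing $\Lambda'$ as a finite union of left cosets $\Lambda' = \bigcup_{i=1}^{n} \lambda'_i M$, where $n = [\Lambda' : M] < \infty$, I obtain $\Lambda' \cdot x = \bigcup_{i=1}^{n} \lambda'_i (M \cdot x)$, a finite union of finite sets and hence finite. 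Combined with the identity above, this gives $|\Lambda \cdot gx| = |\Lambda' \cdot x| < \infty$, so $gx \in Y$, which is exactly the $G$-invariance to be proved.

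The argument is entirely elementary and set-theoretic, so there is no genuine obstacle; the only point demanding care is the bookkeeping of the conjugation. One must apply the commensuration hypothesis to $g^{-1}\Lambda g$ (equivalently, use that $g^{-1}$, not merely $g$, lies in $\Comm_G(\Lambda)$), and then recall that it is the finiteness of $[\Lambda' : M]$ — the index in the \emph{conjugate} subgroup rather than in $\Lambda$ itself — that licenses the passage from the finite $M$-orbit back up to a finite $\Lambda'$-orbit.
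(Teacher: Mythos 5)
Your proof is correct, and it is essentially the paper's argument: the paper covers $\Lambda g$ by finitely many cosets $g_i\Lambda$, which is exactly your decomposition $g^{-1}\Lambda g = \bigcup_i \lambda_i' (\Lambda \cap g^{-1}\Lambda g)$ viewed after translation by $g$, and both proofs conclude by writing the orbit of $gx$ as a finite union of finite sets. Your write-up is in fact slightly more careful than the paper's, in that it makes explicit which index must be finite (the index in the conjugate $g^{-1}\Lambda g$, not in $\Lambda$ itself).
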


\begin{proof}
	Let $x \in X$ having a finite $\Lambda$-orbit and let $g \in G$. We want to see that $\Lambda gx$ is finite. Since $g$ commensurates $\Lambda$, there exist $g_1,\ldots,g_r \in G$ such that $\Lambda g$ is covered by $\cup g_i \Lambda$. So $\Lambda gx$ in contained in a finite union of finite sets, and hence is finite.
\end{proof}
		
		\begin{lem} \label{lem-conf-fixedpts}
			Let $G$ be a locally compact group and $X$ be  a totally disconnected locally compact $G$-space $X$  on which the $G$-action is faithful. Assume moreover that  for every non-empty clopen subset $\alpha$ of $X$, the action of $\rist_G(\alpha)$ on $\alpha$ is minimal and proximal. 
For every closed subset $\H \subset \sub(G)$ not containing the trivial subgroup $\{e\}$,  there exists $r \geq 1$ such that every element of $\H$ has at most $r$ finite orbits in $X$. 
		\end{lem}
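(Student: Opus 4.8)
The plan is to turn the hypothesis $\{e\}\notin\mathcal H$ into a \emph{uniform} non-triviality statement via a compactness argument in $\sub(G)$, and then to use the piecewise minimality and proximality to prevent a subgroup lying in $\mathcal H$ from having too many finite orbits; the obstruction to many finite orbits is a compression property of the rigid stabilizers.

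I would first record the uniform non-triviality. Since $\sub(G)$ is compact, $\mathcal H$ is compact and disjoint from $\{e\}$. For $H\in\mathcal H$ pick $g\in H\setminus\{e\}$; faithfulness together with total disconnectedness of $X$ provides a clopen $\beta$ with $g\beta\cap\beta=\emptyset$. As $\beta$ is compact-open, the set $U_\beta=\{g'\in G: g'\beta\cap\beta=\emptyset\}$ is open, whence $\{K\in\sub(G): K\cap U_\beta\neq\emptyset\}$ is a Chabauty-open neighbourhood of $H$. Letting $H$ vary over $\mathcal H$, these neighbourhoods cover $\mathcal H$, and compactness extracts finitely many clopen sets $\beta_1,\dots,\beta_m$ so that every $H\in\mathcal H$ contains an element displacing some $\beta_i$ off itself; equivalently, a fixed compact $C\subseteq G\setminus\{e\}$ meets every $H\in\mathcal H$.

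The bound on finite orbits cannot be read off such a single witnessing element: an involution of $X$ exchanging two disjoint clopen sets and fixing the rest displaces a clopen set off itself yet has infinitely many finite orbits. So the argument must exploit that the \emph{whole} family $\mathcal H$ stays away from $\{e\}$ — in the applications $\mathcal H=\overline{H^G}$ is $G$-invariant, and I would use that conjugates of a given $H\in\mathcal H$ (and their Chabauty limits) remain in $\mathcal H$. I would then argue by contradiction: if the finite-orbit counts were unbounded on $\mathcal H$, choose $H_n\in\mathcal H$ with at least $n$ finite orbits and pass to a convergent subnet $H_n\to H_\infty\in\mathcal H$, so that $H_\infty\neq\{e\}$.

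The heart of the proof is a compression step. From the fact that each $\rist_G(\alpha)$ acts minimally and (strongly) proximally on $\alpha$, I would extract a clopen-compression property: any proper clopen subset of a clopen $\alpha$ can be pushed into any prescribed nonempty clopen subset of $\alpha$ by an element of $\rist_G(\alpha)$. The plan is to argue that a subgroup with very many finite orbits has its non-trivial behaviour localised inside compressible clopen regions, so that conjugating by suitable compressing elements (which keeps us inside $\mathcal H$) produces subgroups whose non-trivial elements are supported on arbitrarily small clopen sets; their Chabauty limit would then be $\{e\}$, contradicting $\{e\}\notin\mathcal H$. The main obstacle I anticipate is precisely the bookkeeping in this compression: ensuring that a supply of finite orbits growing with $n$ genuinely forces the non-trivial witnesses of the first step toward the identity, rather than merely relocating them, and arranging this using only the proximality that is actually available rather than a stronger contraction hypothesis.
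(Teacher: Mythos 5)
Your opening move is correct and coincides with the paper's: closedness of $\H$ in the compact space $\sub(G)$ yields a compact set $C \subseteq G\setminus\{e\}$ meeting every member of $\H$, and you are right that conjugation-invariance of $\H$ must be invoked (the paper's proof conjugates within $\H$ as well; in the applications $\H$ is a $G$-invariant orbit closure). The gap is in everything after that, and it sits exactly where you flag it. The ``clopen-compression property'' you propose to extract --- every proper clopen subset of $\alpha$ can be pushed into any prescribed nonempty clopen subset of $\alpha$ by an element of $\rist_G(\alpha)$ --- is extreme proximality, and it cannot be extracted from minimality and (even strong) proximality: proximality of a group action only collapses \emph{finite} sets of points (products of proximal flows of a fixed group are proximal), and strong proximality only compresses \emph{measures}, never a whole compact clopen set. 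For instance, $\PGL_n(\mathbf{Q}_p)$ acts minimally and strongly proximally on the totally disconnected compact space $\mathbb{P}^{n-1}(\mathbf{Q}_p)$, $n\geq 3$, yet no group element maps a clopen neighbourhood of a hyperplane into a small ball, since hyperplanes go to hyperplanes. So any proof of your compression claim would have to use the piecewise (micro-supported) structure in an essential way, and no such argument appears in your sketch. Moreover, even granting compression, your localization step fails: a subgroup with many finite orbits need not have its nontrivial elements supported in \emph{any} proper clopen set --- an element can permute each finite orbit while moving every other point of $X$ --- so conjugation cannot be expected to drive such subgroups Chabauty-close to $\{e\}$. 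Relatedly, the limit $H_\infty$ of subgroups $H_n$ with at least $n$ finite orbits carries no usable information: finite orbits do not persist under Chabauty limits, and $H_\infty$ is never brought into contact with the displacement witnesses.

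The missing idea is a counting argument with \emph{disjoint} displacement witnesses, which avoids set-compression altogether and only ever manipulates finite sets. By faithfulness and compactness of $C$, one chooses disjoint nonempty clopen sets $\alpha_0,\dots,\alpha_r$ such that every $c\in C$ displaces some $\alpha_i$ entirely off itself. If some $H\in\H$ had $r+1$ distinct fixed points $x_0,\dots,x_r$ (arranged, after shrinking the $\alpha_j$, to avoid them), pick disjoint clopen neighbourhoods $\beta_i$ of the $x_i$ disjoint from all $\alpha_j$; minimality of $\rist_G(\alpha_i\cup\beta_i)$ acting on $\alpha_i\cup\beta_i$ gives $g_i$ with $g_i(x_i)\in\alpha_i$, and $g=g_0\cdots g_r$ conjugates $H$ to a subgroup having a fixed point in \emph{each} $\alpha_i$. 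Every element of $gHg^{-1}$ therefore fails to displace any $\alpha_i$, so $gHg^{-1}\cap C=\emptyset$, contradicting conjugation-invariance of $\H$; hence every member of $\H$ has at most $r$ fixed points, the bound coming from the number of witness sets rather than from any limiting procedure over subgroups. Finally, finite orbits reduce to fixed points using exactly what proximality does provide: if $H\in\H$ had finite orbits $F_0,\dots,F_r$, take disjoint clopen neighbourhoods $\beta_i$ of the $F_i$ and collapse each $F_i$ to a point $x_i$ by a net in $\rist_G(\beta_i)$; any Chabauty accumulation point of the resulting conjugates of $H$ lies in $\H$ and fixes the $r+1$ distinct points $x_0,\dots,x_r$, contradicting the fixed-point bound. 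This two-step reduction (fixed points first, then finite orbits via collapsing) is the content your proposal leaves open.
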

		
		\begin{proof}
Since $\H$ is Chabauty-closed and does not contain the trivial subgroup $\{e\}$, it follows that there exists a compact subset $P \subset G$ with $e \not \in P$ that intersects non-trivially every element of $\H$. 

Since all elements of $P$ act non-trivially on $X$, by compactness one can find non-empty disjoint compact open subsets $\alpha_0,\ldots,\alpha_{r}$ such that for every $g \in P$ there is $i$ such that $g(\alpha_i)$ and $\alpha_i$ are disjoint. Suppose that there exists $H \in \H$ with $r+1$ distinct fixed points $x_0,\ldots,x_{r}$. Upon replacing $\alpha_0,\ldots,\alpha_{r}$ with smaller subsets, we can assume that $x_i$ does not belong to $\alpha_j$ for all $i,j$. Consider compact open neighbourhoods $\beta_i$ of $x_i$ that are all disjoint and such that $\beta_i$ does not intersect any $\alpha_j$. Since $\rist_G(\alpha_i \cup \beta_i)$ acts minimally on $\alpha_i \cup \beta_i$, there is $g_i \in \rist_G(\alpha_i \cup \beta_i)$ such that $g_i(x_i) \in \alpha_i$. Then $g = g_0 \cdots g_r$ verifies $g(x_i) \in \alpha_i$ for all $i$, so that $g H g^{-1}$ fixes a point inside $\alpha_i$ for all $i$. So $g H g^{-1}$ does not intersect $P$, which is a contradiction.
			
			Suppose now that there exists $H \in \H$ with $r+1$ distinct finite orbits $F_0,\ldots,F_{r}$. Again we choose compact open neighbourhoods $\beta_i$ of $F_i$  that are all disjoint. Using that the action of $\rist_G(\beta_i)$ on $\beta_i$ is proximal, for each $i$, one can find a net $g_{i,k} \in \rist_G(\beta_i)$ and a point $x_i \in \beta_i$ such that $g_{i,k}(x)$ converges to $x_i$ for all $x \in F_i$. Set $g_k = g_{0,k} \cdots g_{r,k}$. Then any accumulation point of $g_k H {g_k}^{-1}$ in $\H$ fixes $x_0,\ldots,x_{r}$, in contradiction with the previous paragraph.
		\end{proof}
	
	Recall that a locally compact group is \textbf{locally elliptic} if every compact subset is contained in a compact subgroup of $G$. The \textbf{locally elliptic radical} $\mathrm{Rad_{LE}}(G)$ of $G$ is the union of all closed normal locally elliptic subgroups of $G$; it is a closed normal subgroup of $G$ \cite{Platonov-ell}. The \textbf{topological FC-center} $B(G)$ of a locally compact group $G$ is the set of elements with a relatively compact conjugacy class. It is a (not necessarily closed) normal subgroup of $G$. More generally, if $H$ is a subgroup of $G$, we let $B_H(G)$ be the set of elements of $G$ with a relatively compact $H$-conjugacy class. Note that when $H$ is normal in $G$, $B_H(G)$ is also normal in $G$. An  elements $g \in G$ is \textbf{periodic} if the subgroup generated by $g$ is relatively compact in $G$.  The set of periodic elements of $G$ is denoted $P(G)$.
	
	The following result was proven by Wu--Yu in \cite{WuYu-B(G)}. 
	
	\begin{prop} \label{prop-B(G)-am}
Let $G$ be a tdlc group such that $B(G)$ is dense in $G$. Then $G / \mathrm{Rad_{LE}}(G)$ is a discrete torsion free abelian group. 

In particular for every tdlc group $G$, the topological FC-center $B(G)$ is contained in the amenable radical of $G$. 
	\end{prop}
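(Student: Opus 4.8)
The plan is to treat the two assertions separately: the first is a structure theorem of Wu--Yu for tdlc groups with dense topological FC-center, and it is the genuinely non-trivial input; the second (the ``in particular'') is a formal consequence obtained by a short closure-and-extension argument. I will sketch the strategy for the first and then carry out the deduction of the second.

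For the first assertion, the natural route is to transport the classical structure theory of discrete FC-groups (B.H.~Neumann, Dietzmann) to the topological setting. Writing $R=\mathrm{Rad_{LE}}(G)$, the three things to establish are that $G/R$ is abelian, torsion free, and discrete. Abelianity: for $b,c\in B(G)$ the commutator $[b,c]$ should be periodic, so that $[B(G),B(G)]$ consists of periodic elements. The key tool here is a topological analogue of Dietzmann's lemma, namely that the closed subgroup generated by finitely many periodic elements each having a relatively compact conjugacy class is compact; consequently the periodic elements of $B(G)$ generate a locally elliptic normal subgroup contained in $R$. By continuity of the commutator map and density of $B(G)$ one then gets $[G,G]\subseteq\overline{[B(G),B(G)]}\subseteq R$, so $G/R$ is abelian. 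Torsion-freeness: the same topological Dietzmann lemma places every periodic element of $B(G)$ inside $R$, which one promotes to the statement that $G/R$ carries no non-trivial element of finite order. Discreteness: $G/R$ is an abelian tdlc group whose locally elliptic radical is trivial (a standard property of $\mathrm{Rad_{LE}}$, coming from the stability of local ellipticity under extensions, i.e.\ $\mathrm{Rad_{LE}}(G/\mathrm{Rad_{LE}}(G))=\{e\}$); a non-trivial compact open subgroup of $G/R$ would be normal, the group being abelian, and hence locally elliptic, a contradiction, so $G/R$ is discrete.

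The hard part is precisely the topological Dietzmann lemma together with the passage from the FC condition $B(G)=G$ to mere density, which requires an approximation argument; these are the points where the non-discrete topology genuinely intervenes, and they constitute the substance of the Wu--Yu theorem.

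Granting the first assertion, the second follows cleanly. Set $N=\overline{B(G)}$; since $B(G)$ is a normal subgroup of $G$, its closure $N$ is a closed normal subgroup. For $g\in B(G)$ the $N$-conjugacy class of $g$ is contained in the relatively compact $G$-conjugacy class and lies in the closed set $N$, hence is relatively compact in $N$; thus $g\in B(N)$, so $B(G)\subseteq B(N)$ and, being dense in $N$ by construction, $B(N)$ is dense in $N$. Applying the first assertion to the tdlc group $N$, the quotient $N/\mathrm{Rad_{LE}}(N)$ is a discrete torsion-free abelian group, hence amenable, while $\mathrm{Rad_{LE}}(N)$ is locally elliptic, hence a directed union of compact (amenable) subgroups and therefore amenable. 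As amenability is closed under extensions for locally compact groups, $N$ is amenable. Being a closed normal amenable subgroup of $G$, $N$ is contained in the amenable radical of $G$, and therefore so is $B(G)\subseteq N$.
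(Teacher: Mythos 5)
Your deduction of the ``in particular'' statement is correct and is essentially the paper's own argument: the paper likewise applies the first assertion to $\overline{B(G)}$, observes that this closed normal subgroup is (locally elliptic)-by-(discrete torsion-free abelian), hence amenable, and concludes that it lies in the amenable radical. You are in fact slightly more careful than the paper, which leaves implicit the verification that $B(G)\subseteq B(\overline{B(G)})$, so that the density hypothesis is inherited by $\overline{B(G)}$.

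The difference is in the first assertion, and there your proposal stops short of a proof. The paper does not reprove the Wu--Yu theorem either, but its proof is not an empty citation: Theorem~4 of \cite{WuYu-B(G)} is stated in terms of the set $P(G)$ of periodic elements (when $B(G)$ is dense, $P(G)=\overline{P(G)\cap B(G)}$ is an open characteristic subgroup and $G/P(G)$ is torsion-free abelian), and the entire content of the paper's proof is the identification $P(G)=\mathrm{Rad_{LE}}(G)$. This is obtained by combining Usakov's theorem ($P(G)\cap B(G)\leq \mathrm{Rad_{LE}}(G)$, whence $P(G)\leq \mathrm{Rad_{LE}}(G)$ since the radical is closed) with the observation that $G/P(G)$, being discrete torsion-free abelian, has trivial locally elliptic radical; in particular discreteness of $G/\mathrm{Rad_{LE}}(G)$ comes for free from the openness of $P(G)$, rather than from a van Dantzig argument. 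Your sketch instead outlines a from-scratch Neumann--Dietzmann-style proof and explicitly leaves its pivotal steps --- the topological Dietzmann lemma, the approximation needed to pass from $B(G)=G$ to dense $B(G)$, and the ``promotion'' to torsion-freeness --- to Wu--Yu. As written, this amounts to citing the first assertion itself; that is defensible, since the result is indeed due to Wu--Yu, but whoever fills in your route will find in Wu--Yu the $P(G)$ formulation, at which point the translation $P(G)=\mathrm{Rad_{LE}}(G)$, i.e.\ the paper's actual proof, is still required. For the record, your two auxiliary observations are sound: an abelian tdlc group with trivial locally elliptic radical is discrete (any compact open subgroup is normal, hence locally elliptic, hence trivial), and once the quotient is known to be discrete and abelian, the preimage of its torsion subgroup is a closed normal locally elliptic subgroup, which yields torsion-freeness. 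But these come after the hard steps, not in place of them.
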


\begin{proof}
Assume that $B(G)$ is dense in $G$. Under this assumption, Theorem 4 in \cite{WuYu-B(G)} asserts that $P(G) = \overline{P(G) \cap B(G)}$, that $P(G)$ is a characteristic open subgroup of $G$, and that $G/P(G)$ is torsion free abelian. Hence to conclude it is enough to see that $P(G) = \mathrm{Rad_{LE}}(G)$. By a result of Usakov (see e.g.\ Theorem A in \cite{WuYu-B(G)}), we have $P(G) \cap B(G) \leq \mathrm{Rad_{LE}}(G)$. Since $\mathrm{Rad_{LE}}(G)$ is closed, we infer that $P(G) \leq \mathrm{Rad_{LE}}(G)$. Now $G/P(G)$ has trivial locally elliptic radical, as it is discrete and torsion free abelian, so we actually have $P(G) = \mathrm{Rad_{LE}}(G)$, as desired. 
	
Now for a general tdlc group $G$, the previous statement can be applied to $H = \overline{B(G)}$. This subgroup is therefore an extension of a locally elliptic group by an abelian group, and in particular is amenable. So $B(G)$ lies in the amenable radical of $G$, as desired.
\end{proof}
	
	\begin{prop} \label{prop-H-not-intersect-open}
		Let $G$ be tdlc group, $J$ an open subgroup of $G$ and $N$ a normal subgroup of $G$ contained in $J$. Let $H_0$ be a closed subgroup of $G$ such that $H_0 \cap J = \{e\}$. Let $\H_{0,N}$ be the  closure of the $N$-orbit of $H_0$ in $\sub(G)$, and let $\H' \subseteq \H_{0,N}$ be a closed minimal $N$-invariant subset. Then  $H \leq B_N(G)$ for every $H \in \H'$. 
	\end{prop}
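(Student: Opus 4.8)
The plan is to fix a subgroup $H \in \H'$ and an element $h \in H$, and to prove directly that the $N$-conjugacy class $\{nhn^{-1} : n \in N\}$ is relatively compact in $G$; since $h \in H$ and then $H \in \H'$ are arbitrary, this yields the proposition. I would start from two structural observations. First, as $J$ is open it is clopen, so $J \setminus \{e\}$ is open and the set $\{K \in \sub(G) : K \cap J = \{e\}\}$ is Chabauty-closed, being the complement of the open set $\{K : K \cap (J \setminus \{e\}) \neq \emptyset\}$. Every conjugate $nH_0n^{-1}$ with $n \in N \le J$ satisfies $nH_0n^{-1} \cap J = n(H_0 \cap J)n^{-1} = \{e\}$, so this closed set contains the whole closure $\H_{0,N}$; in particular $K \cap J = \{e\}$ for every $K \in \H'$. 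Second, since $N$ is normal in $G$ and contained in $J$, it acts trivially on the discrete coset space $G/J$, so for $n \in N$ and $h \in H$ one has $nhn^{-1}J = hJ$, i.e. $nhn^{-1} \in hJ$.

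Combining these, for every $n \in N$ the conjugate $nHn^{-1}$ (again a member of $\H_{0,N}$, so meeting $J$ only in $e$) intersects the coset $hJ$ in exactly one point, namely $nhn^{-1}$: two points of $nHn^{-1} \cap hJ$ differ by an element of $nHn^{-1} \cap J = \{e\}$. Next I would pick a compact open subgroup $U \le J$ — for instance $U = U_0 \cap J$ for any compact open $U_0 \le G$, which is a subgroup, open as an intersection of opens, and compact as a closed subset of $U_0$ — so that $hU$ is a compact open neighbourhood of $h$ contained in $hJ$. The locus $V := \{K \in \sub(G) : K \cap hU \neq \emptyset\}$ is then clopen in $\sub(G)$ (open because $hU$ is open, closed because $hU$ is compact) and contains $H$. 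The decisive point is that membership in $V$ pins the unique coset point into a compact set: if $nHn^{-1} \in V$, then the unique point of $nHn^{-1}$ in $hJ$, which is $nhn^{-1}$, lies in $hU$, since $hU \subseteq hJ$.

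Finally I would invoke minimality of $\H'$. As $V \cap \H'$ is a nonempty open subset of the minimal compact $N$-space $\H'$ (under the conjugation action), the sets $\{K \in \H' : nKn^{-1} \in V\}$ for $n \in N$ form an open cover of $\H'$; compactness gives a finite subcover, which makes the return set $S = \{n \in N : nHn^{-1} \in V\}$ syndetic, i.e. $N = FS$ for a finite set $F \subseteq N$. Writing $n = as$ with $a \in F$ and $s \in S$ gives $nhn^{-1} = a(shs^{-1})a^{-1} \in a\,(hU)\,a^{-1}$, whence
$$\{nhn^{-1} : n \in N\} \subseteq \bigcup_{a \in F} a\,(hU)\,a^{-1},$$
a finite union of compact sets. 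Thus the $N$-conjugacy class of $h$ is relatively compact, i.e. $h \in B_N(G)$.

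The main obstacle, and the reason the two opening observations do not suffice on their own, is that the coset $hJ$ is typically \emph{not} compact, so the assignment $K \mapsto K \cap hJ$ is not proper and the single intersection point could a priori escape to infinity along a net of conjugates. The device that removes this difficulty is to replace the non-compact clopen coset $hJ$ by the compact open set $hU$: intersecting with $hU$ yields a genuinely clopen condition on $\sub(G)$, which by minimality is visited syndetically often, while uniqueness of the point in $hJ$ guarantees that whenever this condition holds the conjugate $nhn^{-1}$ is trapped in the compact set $hU$. I expect the only points requiring care to be the clopenness of $V$ and the derivation of syndeticity from minimality, both of which are standard facts about the Chabauty topology and minimal group actions.
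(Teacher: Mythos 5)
Your proof is correct and takes essentially the same route as the paper's: both first show that $K \cap J = \{e\}$ persists on all of $\H_{0,N}$, then use minimality and compactness of $\H'$ to cover it by finitely many $N$-conjugates of the open set $\{K : K \cap hU \neq \emptyset\}$, and finally exploit the uniqueness of the intersection point of each conjugate $nHn^{-1}$ with the coset $hJ$ to trap the $N$-conjugacy class of $h$ in the compact set $\bigcup_{a} a(hU)a^{-1}$, which is exactly the paper's set $P = \bigcup_i g_i h_0 U g_i^{-1}$. Your syndetic-return-set phrasing is a dynamical repackaging of the paper's finite covering $\H' = \bigcup_i g_i \mathcal{O} g_i^{-1}$, and your direct conjugation argument for the first observation is an equally valid substitute for the paper's appeal to continuity and $N$-equivariance of the map $K \mapsto K \cap J$.
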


\begin{proof}
We first observe that $H \cap J = \{e\}$ for all $H \in \H_{0,N}$. Indeed, the map $\H_{0,N} \to \sub(J)$, $H \mapsto H \cap J$, is continuous since $J$ is open in $G$, and $N$-equivariant since $N$ is contained in $J$. So the set of subgroup $H$ in $\H_{0,N}$ such that $H \cap J = \{e\}$ is closed and $N$-invariant in $\H_{0,N}$, and hence is equal to $\H_{0,N}$ since it contains $H_0$.

Now let $\H' \subseteq \H_{0,N}$ be a closed minimal $N$-invariant subset. Fix a compact open subgroup $U$ of $G$ contained in $J$, a subgroup $H$ in $\H'$ and an element $h_0$ in $H$. The set 
\[ \mathcal{O} =  \left\lbrace K \in \H' | K \cap h_0U \neq \emptyset \right\rbrace  \] 
is an open subset of $\H'$, which is non-empty since $H \in \mathcal{O} $. So by minimality and compactness one can find $g_1,\ldots,g_r$ in $N$ such that $\H' = \bigcup_i g_i \mathcal{O} g_i^{-1}$. Set $P =  \bigcup_i g_i h_0 U g_i^{-1}$. We infer that $K \cap P \neq \emptyset$ for all $K$ in $\H'$.  

Consider the canonical projection $\pi\colon G\to G/N$. We have $\pi(P) \subset \pi(h_0)\pi(U)$, so that $P \subset h_0 J$ since $UN \leq J$. 
In particular, for any two elements $p_1, p_2 \in P$, we have  $p_1^{-1}p_2 \in J$.  Since $H \cap J$ is trivial, this implies that $H \cap P = \left\lbrace h_0\right\rbrace$. Moreover, since $K \cap J = \{e\}$ for all $K \in \H'$, we see that  $gHg^{-1} \cap h_0 J$ contains at most one element for any $g \in N$. Since $gHg^{-1} \cap P$ is non-empty and since $P \subset h_0 J$, we deduce that $gHg^{-1} \cap h_0 J =  gHg^{-1} \cap P$. On the other hand, we have $gh_0g^{-1} \in gHg^{-1} \cap h_0 J$ since $g \in N \leq J$. Therefore we have $gh_0g^{-1} \in P$. This shows that the $N$-conjugacy class of $h_0$ is entirely contained in the compact set $P$, and hence $h_0 \in B_N(G)$. This is valid for all $h_0 \in H$ and all $H \in  \H'$, so the statement is proved.
\end{proof}

\begin{proof}[Proof of Theorem~\ref{thm:S_X}]
Before going into the proof, let us first observe that if  the space $X$ is finite, then $X$ is a singleton   by strong proximality of the $G$-action on $X$. Since $G$ acts faithfully, it is the trivial group, and the required conclusion is trivially true. We assume henceforth that $X$ is infinite. 
			
 We first observe that if a closed subgroup $H$ of $G$ fixes probability measure $\mu$  on $X$, then by strong proximality of $G$ we can find a net $(g_i)$ in $G$ such that $g_i  \mu$ converges to a Dirac measure $\delta_x$. By upper semi-continuity, every accumulation point of $(H^{g_i})$ in $\sub(G)$ fixes the point $x$. 
 	
Suppose for a contradiction that there exists $H_0 \in \mathcal S_X$ that is confined, and let $\H$ denote the orbit closure of $H_0$ in $\sub(G)$. Hence $\H$ does not contain $\{e\}$. For $K \in \H$, let $n_f(K)$ be the number of 
finite $K$-orbits in $X$. 
According to Lemma~\ref{lem-conf-fixedpts},  the supremum $r = \sup\{ n_f(K) \mid K \in \H\} $ is finite. Let $L$ in $\H$ such that $n_f(L) = r$. Note that $n_f(L) \geq 1$ by the initial observation in the second paragraph of this proof. Let $F \subset X$ be the set of points with a finite $L$-orbit.  We denote by $G_F$ the pointwise fixator of $F$ and by $G_{(F)}$ the setwise stabilizer of $F$ in $G$. We also let $G_F^0$ be the subgroup of $G_F$ consisting of those elements fixing pointwise a neighbourhood of $F$. In other words, the group $G_F^0$ is the union, taken over all clopen subsets $\alpha \subset X$ with $\alpha \cap F = \varnothing$, of the rigid stabilizer $\rist_G(\alpha)$. Note that $G_F$ is a finite index open subgroup of $G_{(F)}$, and that $G_F^0$ is a normal subgroup of $G_{(F)}$. We fix a compact open subgroup $U$ of $G_{(F)}$ contained in $G_F$, and we denote by $J$ the subgroup of $G_{(F)}$ generated by $U$ and $G_F^0$.  Since $G_F^0$ is normal in $G_{(F)}$ and $U$ is commensurated in $G_{(F)}$, the subgroup $J$ is commensurated in $G_{(F)}$. 
 
 Let $\K$ be the subset of $\H$ consisting of those elements $H$ such that $H$ is contained in $G_{(F)}$. Note that $\K$ is closed and $G_{(F)}$-invariant. By the definition of $F$, we have $L \in \K$, so that  $\K$ is non-empty.

			\begin{lem} \label{lem-inter-J-non-trivial}
				We have $H \cap J \neq \{e\}$ for all $H \in \K$. 
			\end{lem}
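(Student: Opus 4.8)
The plan is to argue by contradiction: assume some $H \in \K$ satisfies $H \cap J = \{e\}$, and derive that $\{e\} \in \H$, contradicting that $\H$ is the orbit closure of a confined subgroup. The bridge between these is Proposition~\ref{prop-H-not-intersect-open}, which I would apply inside the tdlc group $G_{(F)}$ with the open subgroup $J$ and the normal subgroup $N := G_F^0 \le J$. Since $H$ is a closed subgroup of $G_{(F)}$ with $H \cap J = \{e\}$, the proposition produces a non-empty closed minimal $N$-invariant set $\H'$ inside the closure of the $N$-orbit of $H$ with $K \le B_N(G_{(F)})$ for every $K \in \H'$. Because $G_{(F)}$ is closed in $G$, the Chabauty topologies on $\sub(G_{(F)})$ and $\sub(G)$ are compatible, so each such $K$, being a limit of $N$-conjugates of $H \in \K \subseteq \H$, again lies in $\H$. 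It therefore suffices to prove that $B_N(G_{(F)}) = \{e\}$: then $K = \{e\} \in \H$, the desired contradiction.

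The heart of the argument is showing $B_{G_F^0}(G_{(F)}) = \{e\}$, and this is where piecewise strong proximality enters. I would show that every $g$ in this relative FC-center fixes each point $x \in X \setminus F$. Suppose not, so $gx \ne x$ for some $x \notin F$. Using total disconnectedness and finiteness of $F$, choose a clopen $\alpha \ni x$ with $\alpha \cap F = \varnothing$ and $g\alpha \cap \alpha = \varnothing$; then $\rist_G(\alpha) \le G_F^0 = N$, and since $g\alpha \subseteq X \setminus \alpha$ every element of $\rist_G(\alpha)$ fixes $g\alpha$ pointwise. Fix $\mu \in \mathrm{Prob}(\alpha)$ whose support has at least two points, and write $h_i := n_i g n_i^{-1}$. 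Strong proximality of $\rist_G(\alpha) \curvearrowright \alpha$ yields a net $(n_i)$ in $\rist_G(\alpha)$ with $n_i \mu \to \delta_z$ for some $z \in \alpha$, and the key computation is
\[
h_i(n_i\mu) = n_i g n_i^{-1} n_i \mu = n_i (g\mu) = g\mu,
\]
the last equality holding because $g\mu$ is supported on $g\alpha$, which $n_i$ fixes pointwise. As $g \in B_N(G_{(F)})$, the net $(h_i)$ lies in a compact subset of $G$, so a subnet converges to some $h$; by joint continuity of the $G$-action on $\mathrm{Prob}(X)$ the left-hand side converges to $\delta_{hz}$, forcing $g\mu = \delta_{hz}$. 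But $g\mu$ is the push-forward under the homeomorphism $g$ of a measure with at least two points in its support, hence is not a Dirac mass — a contradiction.

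Thus every element of $B_N(G_{(F)})$ fixes $X \setminus F$ pointwise; since taking $\alpha = X$ in the hypothesis shows $X$ is an infinite minimal strongly proximal $G$-space, it is perfect, so $X \setminus F$ is dense and such an element must be the identity, giving $B_N(G_{(F)}) = \{e\}$ and closing the argument. I expect the main obstacle to be bookkeeping rather than conceptual: one must confirm the compatibility of the Chabauty topologies so that the subgroups produced by Proposition~\ref{prop-H-not-intersect-open} genuinely lie in $\H$, and verify the joint-continuity statement $h_i\nu_i \to h\nu$ used in passing to the limit. This route makes the triviality of the relative FC-center explicit; alternatively one could invoke Proposition~\ref{prop-B(G)-am} to deduce that $B_N(G_{(F)})$ is amenable, but the contraction computation above yields the stronger conclusion that it is trivial, which is precisely what the contradiction with $\{e\} \notin \H$ demands.
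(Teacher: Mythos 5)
Your proof is correct, and while it shares the paper's outer skeleton --- assume $H \cap J = \{e\}$ for some $H \in \K$, apply Proposition~\ref{prop-H-not-intersect-open} to the triple $(G_{(F)}, J, N = G_F^0)$ to produce $K \in \K \subseteq \H$ with $K \leq B_{G_F^0}(G_{(F)})$, then reach the contradiction $\{e\} \in \H$ by proving $B_{G_F^0}(G_{(F)}) = \{e\}$ --- your proof of that crucial triviality is genuinely different from the paper's. The paper argues structurally: if $B_{G_F^0}(G_{(F)})$ were non-trivial, then, being normal in $G_{(F)}$, the commutator lemma for normal subgroups would yield a clopen $\alpha \subseteq X \setminus F$ with $\rist_G(\alpha)' \leq B_{G_F^0}(G_{(F)})$, hence $\rist_G(\alpha)' \leq B(\rist_G(\alpha))$, which lies in the amenable radical of $\rist_G(\alpha)$ by the Wu--Yu result (Proposition~\ref{prop-B(G)-am}); since the faithful minimal strongly proximal action of $\rist_G(\alpha)$ on the non-singleton $\alpha$ forces a trivial amenable radical, $\rist_G(\alpha)$ would be abelian, hence amenable, which is absurd. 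You instead show directly that every $g \in B_{G_F^0}(G_{(F)})$ fixes each $x \in X \setminus F$: otherwise a clopen $\alpha \ni x$ disjoint from $F$ with $g\alpha \cap \alpha = \emptyset$ gives, via a strongly proximal contracting net $(n_i)$ in $\rist_G(\alpha) \leq G_F^0$, the identity $n_i g n_i^{-1}(n_i\mu) = g\mu$, and relative compactness of the $N$-conjugacy class of $g$ plus joint continuity of the action on $\mathrm{Prob}(X)$ force the non-Dirac measure $g\mu$ to equal a Dirac mass $\delta_{hz}$; perfectness of $X$ (hence density of $X \setminus F$) and faithfulness then give $g = e$. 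The bookkeeping you flagged does check out: $\sub(G_{(F)})$ carries the subspace topology induced from $\sub(G)$ because $G_{(F)}$ is closed, so the limits of $N$-conjugates genuinely lie in $\K \subseteq \H$; and convergence in $G$ implies uniform convergence on the compact space $X$, which yields the joint continuity on $\mathrm{Prob}(X)$. What your route buys is self-containedness: it eliminates both external inputs (the commutator lemma and Wu--Yu's theorem), whereas the paper's route is shorter on the page given those citations. One small inaccuracy in your closing aside: Proposition~\ref{prop-B(G)-am} does not apply directly to the \emph{relative} FC-center $B_N(G_{(F)})$ --- the paper invokes the commutator lemma precisely to convert the relative statement into one about the absolute FC-center $B(\rist_G(\alpha))$ --- but your main argument never relies on that aside.
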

			
			\begin{proof}[Proof of Lemma \ref{lem-inter-J-non-trivial}]
	Suppose for a contradiction that this is not the case, i.e. there exists $K_0 \in \K$ such that $K_0 \cap J =  \{e\}$.  By invoking Proposition \ref{prop-H-not-intersect-open} for the group $G_{(F)}$, the open subgroup $J$ and the normal subgroup $G_F^0$, we infer that there exists $K \in \K$ such that $K$ is contained in $B_{G_F^0}(G_{(F)})$. 
	
	Now, we claim that the subgroup $B_{G_F^0}(G_{(F)})$ is actually trivial. Otherwise, since $B_{G_F^0}(G_{(F)})$ is a normal subgroup of $G_{(F)}$, the classical commutator lemma for normal subgroups (see e.g.\ \cite[Lem. 4.1]{Nek-expand} for a modern reference) asserts that there exists a compact open subset $\alpha$ of $X \setminus F$ such that the derived subgroup $\rist_G(\alpha)'$ is contained in $B_{G_F^0}(G_{(F)})$. Since $\rist_G(\alpha)$ is contained in $G_F^0$, this actually implies that $\rist_G(\alpha)' $ is contained in  $B(\rist_G(\alpha))$, and hence also in the amenable radical of $\rist_G(\alpha)$ according to Proposition \ref{prop-B(G)-am}. On the other hand, by assumption the action of $\rist_G(\alpha)$ on $\alpha$ is faithful, minimal and strongly proximal, so the group $\rist_G(\alpha)$ has a trivial amenable radical (indeed $\alpha$ is not a singleton since $X$ is infinite by hypothesis). In particular $\rist_G(\alpha)$ has trivial topological FC-center, and $\rist_G(\alpha)' $ is trivial. This implies that $\rist_G(\alpha)$ would be abelian, hence amenable, which again is impossible. It follows that $B_{G_F^0}(G_{(F)})$ is indeed trivial, and hence so is  the subgroup $K$. We deduce that  $\H$  contains the trivial subgroup, which is a contradiction. 
	\end{proof}
			
			We next record the following.
			
			\begin{lem} \label{lem-intersect-finite-orbit}
				For every $H \in \K$, the subgroup $H \cap J$ does not have any finite orbit in $X \setminus F$.
			\end{lem}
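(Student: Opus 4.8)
The plan is to argue by contradiction and to play the extra finite orbit against the maximality of $r=\sup\{n_f(K)\mid K\in\H\}$. Suppose some $H\in\K$ is such that $\Lambda:=H\cap J$ has a finite orbit $O\subseteq X\setminus F$. Two preliminary observations set the stage. First, since $U\le G_F$ and $G_F^0\le G_F$, we have $J\le G_F$, so $\Lambda\le G_F$ fixes $F$ pointwise; thus $F$ already consists of $|F|\ge r$ fixed points of $\Lambda$, and under our hypothesis $\Lambda$ fixes all of $F$ but also has a nontrivial finite orbit off $F$. Second, since $J$ is commensurated in $G_{(F)}$ and $H\le G_{(F)}$, the subgroup $\Lambda=H\cap J$ is commensurated in $H$, so by Lemma~\ref{lem-commens-finite orbits} the set $E$ of points of $X$ with finite $\Lambda$-orbit is $H$-invariant and strictly contains $F$.

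The engine of the proof is a contraction using piecewise strong proximality, organised so as to stay inside $\K$. Choose a clopen set $\beta$ with $O\subseteq\beta\subseteq X\setminus F$; then $\rist_G(\beta)\le G_F^0\le J$, and, the $G$-action being piecewise strongly proximal, $\rist_G(\beta)$ acts strongly proximally on $\beta$. The uniform probability measure $\mu_O$ on the finite orbit $O$ is $\Lambda$-invariant, so there is a net $(g_k)$ in $\rist_G(\beta)$ with $g_k\mu_O\to\delta_y$ for some $y\in\beta$. Passing to a subnet, let $H'=\lim_k g_kHg_k^{-1}\in\H$; since each $g_k\in J\le G_{(F)}$ and $H\le G_{(F)}$, we get $H'\le G_{(F)}$, i.e.\ $H'\in\K$. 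The decisive point is that the conjugating elements lie in $J$: as $g_kJg_k^{-1}=J$, we have $g_k\Lambda g_k^{-1}=(g_kHg_k^{-1})\cap J$, and because $J$ is open the map $K\mapsto K\cap J$ is continuous, whence $H'\cap J=\lim_k g_k\Lambda g_k^{-1}$. Since $\mu_O$ is $\Lambda$-invariant and $g_k\mu_O\to\delta_y$, the limit $H'\cap J$ fixes $y$; and of course $H'\cap J\le J\le G_F$ fixes $F$. Thus $H'\in\K$ and $H'\cap J$ fixes $F\cup\{y\}$ pointwise, strictly enlarging the fixed-point set of $H'\cap J$ while remaining inside $\K$.

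The remaining, and principal, difficulty is to turn this enlargement into a genuine contradiction with the maximality of $r$. What the contraction directly yields is a point $y\notin F$ fixed by the open, commensurated subgroup $H'\cap J$, that is, a finite orbit of $H'\cap J$ off $F$; it does \emph{not} immediately produce a finite orbit of $H'$ itself, yet it is $n_f(H')\le r$, the count of finite orbits of the full group $H'\in\H$, that one must exceed. The heart of the argument is therefore to control the discrepancy between the finite orbits of $H'$ and those of its open commensurated subgroup $H'\cap J$: using that $H'\cap J$ is commensurated in $H'$, Lemma~\ref{lem-commens-finite orbits} makes the finite-$(H'\cap J)$-orbit set $H'$-invariant, while the finite $H'$-orbits on $F$ are preserved because the $g_k$ fix $F$ pointwise. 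The cleanest instance is $H=L$, where all $r$ finite orbits lie in $F$ and are retained, so that a single additional finite $H'$-orbit off $F$ would give $n_f(H')\ge r+1$. Verifying that $y$, or some point of its $H'$-orbit, really lies in a finite $H'$-orbit, thereby bridging the gap between fixed points of $H'\cap J$ and finite orbits of $H'$, is the step I expect to demand the most care; this is where the commensuration together with the maximality of $r$ must be combined, possibly supplemented by the measure-fixing property defining $\mathcal{S}_X$. Everything else (the inclusion $J\le G_F$, the commensuration of $\Lambda$ in $H$, the invariance of $E$, the continuity of $K\mapsto K\cap J$, and the retention of the $F$-orbits under $G_F^0$-conjugation) is routine given the results already in place.
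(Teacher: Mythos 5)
Your argument stops exactly where the lemma's actual content lies, and the device you deploy (contraction by strong proximality) cannot close the gap. What your construction produces is another group $H'\in\K$ such that $H'\cap J$ fixes a point $y\notin F$ --- that is, one more instance of the very hypothesis you set out to contradict, not a contradiction. The obstruction you correctly sense at the end is real: a fixed point (or finite orbit) of the \emph{subgroup} $H'\cap J$ need not lie in a finite orbit of $H'$, while the maximality of $r$ only constrains the number of finite orbits of full elements of $\H$. Iterating the contraction never bridges this, so the proposal as written does not get past its own hypothesis.

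The missing ingredient is a finiteness statement, and it comes from the two auxiliary results you never invoke. First, Lemma \ref{lem-inter-J-non-trivial} shows that $\left\lbrace H\cap J \mid H\in\K\right\rbrace$ is a closed $J$-invariant subset of $\sub(J)$ avoiding the trivial subgroup. Second, since $J\supseteq G_F^0$ contains $\rist_G(\alpha)$ for every clopen $\alpha\subseteq X\setminus F$, Lemma \ref{lem-conf-fixedpts} applies to the $J$-action on $X\setminus F$ and yields that $H\cap J$ has only \emph{finitely many} finite orbits there. Consequently the union $E$ of all finite $(H\cap J)$-orbits is a \emph{finite} set; by the commensuration observation you did make (Lemma \ref{lem-commens-finite orbits}), $E$ is $H$-invariant, and a finite $H$-invariant set is a union of finite $H$-orbits. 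That is precisely the bridge from finite $(H\cap J)$-orbits to finite $H$-orbits that your proposal lacks; the proof then concludes because every finite $H$-orbit of an element of $\K$ is contained in $F$, which is the only point where the definition of $\K$ and the maximality of $r$ enter. Note also that no proximality argument occurs in the paper's proof of this lemma: the contraction you set up is the one used afterwards, in the concluding part of the proof of Theorem \ref{thm:S_X}. Finally, your fallback of specializing to $H=L$ would not rescue the application even if completed, since the main proof invokes the lemma for groups obtained as limits of conjugates of elements of $\K$, not for $L$ itself.
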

			
			\begin{proof}[Proof of Lemma \ref{lem-intersect-finite-orbit}]
				The set \[ \left\lbrace H \cap J | H \in \K \right\rbrace  \] is a closed $J$-invariant subset of $\sub(J)$, which does not contain the trivial subgroup according to Lemma \ref{lem-inter-J-non-trivial}. Let us fix some  $H \in \K$. 
				Since $J$ contains $G_F^0$, we deduce that the $J$-action on  $X \setminus F$ satisfies the hypotheses of Lemma~\ref{lem-conf-fixedpts}. Hence, the number of finite $H \cap J$-orbits is finite. Since $J$ is a commensurated subgroup of $G_{(F)}$, the subgroup $H \cap J$ is commensurated in $H$, and hence the union of the finite orbits of $H \cap J$ is an $H$-invariant set (see Lemma \ref{lem-commens-finite orbits}). By the definition of $\K$, every finite $H$-orbit is contained in $F$. This confirms that every finite  $H \cap J$-orbit is contained in $F$,  as desired.
			\end{proof}
			
			We shall now finish the proof of  Theorem~\ref{thm:S_X}. Let $A$ be the collection of $U$-invariant clopen neighbourhoods of $F$. For $\alpha \in A$, the subgroup $\rist_G(X \! \setminus \! \alpha)$ is normalized by $U$ since $U$ stabilizes $\alpha$, and $J_\alpha := \rist_G(X \! \setminus \! \alpha) U$ is a subgroup of $J$. We have $J_\alpha \subset J_{\alpha'}$ for all $\alpha' \subseteq \alpha$. Since $A$ forms a basis of neighbourhoods of $F$, we have $\bigcup_A \rist_G(X \! \setminus \! \alpha) = G_F^0$, so that $\bigcup_A J_\alpha = J$.
			
			 We fix a point $\xi$ in $X \setminus F$, and $\alpha \in A$ such that $\xi \notin \alpha$. We also fix an element $H \in \K$. The subgroup  $H \cap J_\alpha$ stabilizes the clopen subset $X \! \setminus \! \alpha$, and hence fixes a probability measure $\nu_\alpha$ on $X \! \setminus \! \alpha$ by the assumption that $H$ belongs to $\mathcal{S}_X$. Since the action of $\rist_G(X \! \setminus \! \alpha)$ on $X \! \setminus \! \alpha$ is minimal and strongly proximal, one can find a net $(g_k)$ in $\rist_G(X \! \setminus \! \alpha)$  such that $g_k(\nu_\alpha)$ converges to the Dirac measure at $\xi$. By compactness we may assume that $H^{g_k}$ converges to a point $H^{(\alpha)}$ in $\K$. 
			 Since $J_\alpha$ is open, it follows that  $H^{g_k} \cap J_\alpha$ converges to  $H^{(\alpha)} \cap J_\alpha$, which must therefore fix $\xi$ by the choice of the net $(g_k)$. 
			 
			 Now by compactness again, upon passing to a subnet we may assume that $H^{(\alpha)}$ converges in $\K$. Let $K$ be its limit. We claim that $K \cap J$ fixes $\xi$. Since $J = \bigcup_A J_\beta$, it suffices to check that $K \cap J_\beta$ fixes $\xi$ for all $\beta \in A$. We fix $\beta \in A$. Using again that $J_\beta$ is open, the subgroup  $K \cap J_\beta$ is the limit of $H^{(\alpha)} \cap J_\beta$. Now eventually  we have $J_\beta \leq J_{\alpha}$. Therefore $H^{(\alpha)} \cap J_\beta$ is a subgroup of $H^{(\alpha)} \cap J_\alpha$, and hence $H^{(\alpha)} \cap J_\beta$ fixes $\xi$. By upper semi-continuity, so does $K \cap J_\beta$. This proves the claim. So $K$ is an element of $\K$ such that $K \cap J$ fixes $\xi$. Since Lemma~\ref{lem-intersect-finite-orbit} above says that no element of $\K$ can have this property, we have obtained a contradiction, thereby finishing  the proof of the theorem.
		\end{proof}

As mentioned in the introduction, the conclusion of Theorem~\ref{thm:S_X} was already known when $G$ is a discrete group, as it follows from Theorem 1.1 of \cite{LBMB-comm-lemma}. This result asserts that when $G$ is discrete and $X$ is a $G$-space on which $G$ acts faithfully, then every confined subgroup of $G$ contains the commutator subgroup of the rigid stabilizer of some non-empty open subset of $X$. Here we note that this much stronger conclusion no longer holds outside the realm of discrete groups. For instance let $G = \mathrm{Aut}(T_d)$ be the automorphism group of a $d$-regular tree, $d \geq 3$. The group $G$ acts faithfully on $\partial T$. If $\Gamma$ is a discrete and cocompact subgroups for $G$ (take for instance $\Gamma$ to be the free product of $d$ copies of the cyclic group $C_2$), then $\Gamma$ has a closed conjugacy class in $\sub(G)$ because $\Gamma$ is cocompact. So in particular $\Gamma$ is a confined subgroup of $G$. But on the other hand the pointwise fixator in $\Gamma$ of an infinite subtree is trivial; and hence $\Gamma$ obviously cannot contain the commutator subgroup the rigid stabilizer of a non-empty open subset of $\partial T$.

\section{Neretin groups} \label{sec-neretin}

\subsection{Definitions} \label{subsec-ner-def}

We briefly review basic notions concerning Neretin groups (see  \cite[\S6.3]{CapDM} and \cite{GarnLaza} for more details). Let $d, k \geq 2$ be integers, and let $\mathcal T_{d, k}$ be a rooted tree such that the root has $k$ descendants, and every vertex distinct from the root has $d$ descendants. Notice that $\mathcal T_{d, d}$ is the regular rooted tree of degree $d$. Moreover, the graph $\mathcal T_{d, d+1}$ is isomorphic to the regular non-rooted tree of degree $d+1$.  An \textbf{almost automorphism} of $\mathcal T_{d, k}$ is a triple of the form $(A, B, \varphi)$, where $A$ and $B$ are finite subtrees of $\mathcal T_{d, k}$ containing the root such that $| \partial  A | = | \partial  B |$, and $\varphi$ is an isomorphism of forests $\mathcal T_{d, k} \setminus A \to \mathcal T_{d, k} \setminus B$. The \textbf{group of almost automorphisms} of $\mathcal T_{d, k}$, denoted by 
$\mathcal N_{d, k}$, is the quotient of the set of all almost automorphisms by the relation which identifies two almost automorphisms $(A, B, \varphi)$ and $(A', B', \varphi')$ if there exists some finite subtree $A''$ containing $A \cup A'$ and such that $\varphi$ and $\varphi'$ coincide on $\mathcal T_{d, k} \setminus A''$.   It is easy to verify that  $\mathcal N_{d, k}$ is indeed a group. It carries a unique locally compact group topology such that the natural inclusion of the profinite group $\Aut(\mathcal T_{d, k})$ in $\mathcal N_{d, k}$ is continuous and open. In particular $\mathcal N_{d, k}$  is a totally disconnected locally compact group. Moreover, as observed in \cite[\S6.3]{CapDM}, the group $\mathcal N_{d, k}$  has a finitely generated dense subgroup, and is thus compactly generated.

Observe that for all integers $d, k$, the topological group $\mathcal N_{d, k}$ is naturally isomorphic to $\mathcal N_{d, k + d - 1}$, so that  the values of $k$ in the set $\{2, 3, \dots, d\}$ suffice to account for all isomorphism classes of the Neretin groups.

The collection of those elements of $\mathcal N_{d, k}$ that can be represented by an almost automorphism $(A, B, \varphi)$ such that $A = B$ is a ball around the root of $\mathcal T_{d, k}$ is a subgroup of $\mathcal N_{d, k}$ that we denote by $\mathcal O_{d, k}$. Clearly, the full automorphism group $\Aut(\mathcal T_{d, k})$ of the rooted tree is a compact open subgroup of $\mathcal N_{d, k}$  contained in $\mathcal O_{d, k}$. The group $\mathcal O_{d, k}$ is the directed union of its compact open subgroups and is thus amenable. Moreover $\mathcal O_{d, k}$ is  topologically simple (see \cite[Rem.~6.8 and Lem.~6.9]{CapDM}). 

The set of ends $\partial \mathcal T_{d, k}$ is a compact $\mathcal N_{d, k}$-space on which  $\mathcal N_{d, k}$ acts faithfully. Given a vertex $v$ of $\mathcal T_{d, k}$, we denote by $\mathcal T_v$ the subtree spanned by all vertices separated from the root by $v$. In particular, if $v_0$ denotes the root, we have $\mathcal T_{v_0} = \mathcal T_{d, k}$. For each vertex $v$, the set $\partial \mathcal T_v$ is a basic clopen subset of $\partial \mathcal T_{d, k}$; the collection of those clopen subsets forms a basis of the topology on $\partial \mathcal T_{d, k}$. 

\subsection{Application of Theorem \ref{thm:intro}}

The following important feature follows from the self-replicating properties of the Neretin groups.

\begin{lem}\label{lem:StrongProx}
The action of $\mathcal N_{d, k}$ on $\partial \mathcal T_{d, k}$ is piecewise minimal-strongly-proximal. 
\end{lem}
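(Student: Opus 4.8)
The plan is to exploit the self-replicating (self-similar) structure of the Neretin group in order to reduce the statement, which concerns \emph{all} non-empty clopen subsets $\alpha$ at once, to the single case $\alpha = \partial\mathcal T_{d,k}$.

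First I would identify the rigid stabilizers explicitly. Every non-empty clopen subset of $\partial\mathcal T_{d,k}$ can be written as a finite disjoint union $\alpha = \partial\mathcal T_{v_1} \sqcup \cdots \sqcup \partial\mathcal T_{v_n}$ with $v_1, \dots, v_n$ pairwise incomparable vertices. An element of $\mathcal N_{d,k}$ lies in $\rist_{\mathcal N_{d,k}}(\alpha)$ exactly when it fixes every end outside $\alpha$; since the action on ends is faithful, such an element is determined by its restriction to $\alpha$, and this restriction is precisely an almost automorphism of the forest $\mathcal T_{v_1} \sqcup \cdots \sqcup \mathcal T_{v_n}$, each component being isomorphic to the regular rooted tree $\mathcal T_{d,d}$; conversely every almost automorphism of that forest extends by the identity. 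Adjoining a common root to this forest identifies it with $\mathcal T_{d,n}$, so that the pair $(\rist_{\mathcal N_{d,k}}(\alpha), \alpha)$ is isomorphic, as a topological transformation group, to $(\mathcal N_{d,n}, \partial\mathcal T_{d,n})$. Using $\mathcal N_{d,1} \cong \mathcal N_{d,d}$ to cover the case $n = 1$, this reduces the lemma to proving that for all $d, m \geq 2$ the action of $\mathcal N_{d,m}$ on $\partial\mathcal T_{d,m}$ is minimal and strongly proximal.

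Minimality is immediate, as the compact subgroup $\Aut(\mathcal T_{d,m})$ already acts transitively on $\partial\mathcal T_{d,m}$: two rays from the root can be matched level by level using that the automorphism group of the homogeneous rooted tree is transitive on the vertices of each sphere. For strong proximality I would use north--south dynamics. The point is that $\mathcal N_{d,m}$, unlike $\Aut(\mathcal T_{d,m})$, contains a hyperbolic element: fixing a bi-infinite geodesic $(\dots, x_{-1}, x_0, x_1, \dots)$ with endpoints $\xi_-, \xi_+$, the translation by one step along it is realised by an almost automorphism $h$ (it is an honest isomorphism away from a finite region near the root, the bounded defect there being exactly what the almost-automorphism relation absorbs), and $h$ has north--south dynamics on the Cantor set $\partial\mathcal T_{d,m}$, with $h^n\eta \to \xi_+$ for all $\eta \neq \xi_-$. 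Given $\mu \in \mathrm{Prob}(\partial\mathcal T_{d,m})$, its atom set is countable, so by transitivity on ends I may conjugate $h$ to arrange $\mu(\{\xi_-\}) = 0$. Then for any clopen neighbourhood $V$ of $\xi_+$ one has $h^{-n}(V) \supseteq \partial\mathcal T_{d,m} \setminus W_n$ with $W_n \downarrow \{\xi_-\}$, whence $h^n\mu(V) = \mu(h^{-n}V) \geq 1 - \mu(W_n) \to 1$; thus $h^n\mu \to \delta_{\xi_+}$, and the orbit closure of $\mu$ contains a Dirac mass.

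The step I expect to be the main obstacle is twofold, and both parts are manifestations of the self-replicating structure that the lemma is meant to capture: first, the explicit identification of $\rist_{\mathcal N_{d,k}}(\alpha)$ with a Neretin group, where one must verify that fixing the complement of $\alpha$ pointwise yields all almost automorphisms of the corresponding forest and nothing more; and second, the construction of the hyperbolic element $h$, where one must check that a translation along a bi-infinite geodesic genuinely defines an element of $\mathcal N_{d,m}$ in spite of the local defect created by the root of the rooted tree.
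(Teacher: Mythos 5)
Your proof is correct, and its structural core --- decomposing a clopen set $\alpha$ as a disjoint union of basic clopen sets $\partial\mathcal T_{v_i}$ and identifying $\bigl(\rist_{\mathcal N_{d,k}}(\alpha),\alpha\bigr)$ with $\bigl(\mathcal N_{d,n},\partial\mathcal T_{d,n}\bigr)$ --- is exactly the reduction used in the paper; you are in fact slightly more careful, since you justify that pointwise fixing the complement yields precisely the almost automorphisms of the forest and you handle $n=1$ via $\mathcal N_{d,1}\cong\mathcal N_{d,d}$, points the paper leaves implicit. Where you genuinely diverge is in proving that each $\mathcal N_{d,m}$ acts strongly proximally on $\partial\mathcal T_{d,m}$. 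The paper takes a softer route: the action is $n$-transitive, hence proximal, and the basic clopen sets $\partial\mathcal T_v$ (with $v$ not the root) form a single $G$-orbit of compressible open sets, so strong proximality follows from the criterion of \cite[Prop.~VI.1.6]{Margulis} (proximal plus a compressible non-empty open set implies strongly proximal). You instead verify the definition directly: you build a hyperbolic almost automorphism $h$ (translation along a bi-infinite geodesic, an honest isomorphism away from a bounded neighbourhood of the root) with north--south dynamics, conjugate it so that its repelling end is not an atom of the given measure $\mu$, and conclude $h^n\mu\to\delta_{\xi_+}$. Both arguments are sound, and the trade-off is clear: the paper's version requires no construction of any specific group element and no dynamical verification, at the cost of invoking an external criterion, whereas yours is self-contained at the level of measures but puts the burden on the two verifications you correctly flag --- that the geodesic translation really is an element of $\mathcal N_{d,m}$ (true: away from the root every vertex has exactly $d$ children, so the defect of the translation is confined to a finite subtree, which the almost-automorphism equivalence absorbs) and that it has north--south dynamics on ends. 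Your atom-avoidance step is also fine, since $\partial\mathcal T_{d,m}$ is uncountable, the atoms of $\mu$ are countable, and the action is transitive on ends.
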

\begin{proof}
Set $G = \mathcal N_{d, k}$. 
Let us first show that, for every $d$ and $k$, the $G$-action on $\partial \mathcal T_{d, k}$ is minimal and strongly proximal. 

Clearly the action is transitive, hence minimal. It is easy to see that the action is $n$-transitive (i.e. transitive on ordered $n$-tuples of distinct points) for all $n$. In particular it is proximal. To show strong proximality, it suffices by \cite[Prop.~VI.1.6]{Margulis} to show the existence of a compressible non-empty open set. The existence of such an open set is clear since $G$ acts transitively on the basic clopen sets  arising as the set of ends of the sub-rooted trees of the form $\mathcal T_v$, where $v$ is a vertex different from the root. 

Let now $\alpha$ be a non-empty clopen subset of $\partial  \mathcal T_{d, k}$. Then there is a finite set of vertices $\{v_1, \dots, v_n\}$ distinct from the root, such that $\alpha$ decomposes as the disjoint union $\bigcup_{i=1}^n \partial \mathcal T_{v_i}$. Moreover, it follows from the definitions that $\rist_G(\alpha)$ is isormorphic to $\mathcal N_{d, n}$, and that  $\alpha$ is equivariantly homeomorphic to $\partial T_{d, n}$. Therefore, it follows from the first part of the proof that the $\rist_G(\alpha)$-action on $\alpha$ is minimal and strongly proximal. 
\end{proof}

As announced in the introduction, the Neretin groups constitute an important family of examples to which Theorem~\ref{thm:intro} applies.

\begin{proof}[Proof of Corollary~\ref{cor:intro:Neretin-confined}]
The $\mathcal N_{d, k}$-action on the compact space $X = \partial \mathcal T_{d, k}$ is continuous and faithful. Moreover it is piecewise minimal-strongly proximal by Lemma~\ref{lem:StrongProx}. Thus the conclusion follows from Theorem~\ref{thm:intro}.
\end{proof}

We underline that for $X = \partial \mathcal T_{d, k}$, the Neretin group  $\mathcal N_{d, k}$  has a discrete subgroup $F_{d, k}$, also called  Thompson's group $F_{d, k}$,  which belongs to $\mathcal S_X$, while it is an open problem to determine whether $F_{d, k}$ is relatively amenable in $\mathcal N_{d, k}$ (not to mention the problem whether or not $F_{d, k}$ is amenable). We refer to \cite[\S6.3]{CapDM} for the definition.  

\begin{cor}\label{cor:Thompson_F}
For all integers $d, k \geq 2$, Thompson's group $F_{d, k}$ is not confined in the Neretin group $\mathcal N_{d, k}$. 
\end{cor}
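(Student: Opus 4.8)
The plan is to obtain this as a direct application of Theorem~\ref{thm:S_X}, rather than of the relative-amenability statement in Corollary~\ref{cor:intro:Neretin-confined}. First I would set $G = \mathcal N_{d,k}$ and $X = \partial \mathcal T_{d,k}$ and check that this pair meets all the hypotheses of Theorem~\ref{thm:S_X}: the group $\mathcal N_{d,k}$ is tdlc and $X = \partial \mathcal T_{d,k}$ is a totally disconnected compact $\mathcal N_{d,k}$-space on which the action is faithful, as recorded in Section~\ref{subsec-ner-def}; and the action is piecewise minimal-strongly-proximal by Lemma~\ref{lem:StrongProx}. Consequently Theorem~\ref{thm:S_X} applies and asserts that \emph{no} subgroup $H \in \mathcal S_X$ is confined in $\mathcal N_{d,k}$.

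The conceptual heart of the statement is therefore the membership $F_{d,k} \in \mathcal S_X$, and this is exactly where the present corollary goes beyond Corollary~\ref{cor:intro:Neretin-confined}: since it is unknown whether $F_{d,k}$ is relatively amenable in $\mathcal N_{d,k}$, one cannot deduce non-confinement from the relative-amenability version, and one must instead exploit that Theorem~\ref{thm:S_X} covers the \emph{strictly larger} class $\mathcal S_X$. To justify the membership I would unwind the definition of $\mathcal S_X$: fixing a non-empty clopen subset $\alpha \subseteq \partial \mathcal T_{d,k}$, I must produce a probability measure on $\alpha$ fixed by $\stab_{F_{d,k}}(\alpha)$. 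Since $F_{d,k}$ acts on $\partial \mathcal T_{d,k}$ by homeomorphisms preserving the natural linear order on the ends, and since $\alpha$ is a finite union of order-intervals whose endpoints lie in $\alpha$, an order-preserving element stabilizing $\alpha$ must preserve each of these components and hence fix an endpoint $\eta \in \alpha$; the Dirac mass $\delta_\eta$ then furnishes the required invariant measure. This gives $F_{d,k} \in \mathcal S_X$.

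Combining the two observations, Theorem~\ref{thm:S_X} yields immediately that $F_{d,k}$ is not confined in $\mathcal N_{d,k}$, which is the assertion. The only substantive step is the membership $F_{d,k} \in \mathcal S_X$; the non-confinement conclusion itself is then a black-box invocation of Theorem~\ref{thm:S_X}. Accordingly I expect the sole (and still elementary) obstacle to be making the order-preservation argument precise—identifying $\partial \mathcal T_{d,k}$ with a linearly ordered Cantor set on which $F_{d,k}$ acts in an order-preserving manner, and verifying that the setwise stabilizer of every clopen set fixes a point of that set—rather than anything about confinement, which is handled entirely by the already-established machinery.
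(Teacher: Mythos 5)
Your proposal is correct and follows essentially the same route as the paper: the paper's proof likewise consists of showing $F_{d,k} \in \mathcal S_X$ by noting that $\Stab_{F_{d,k}}(\alpha)$ fixes a point (hence a Dirac measure) in every non-empty clopen $\alpha \subseteq \partial\mathcal T_{d,k}$, and then invoking Theorem~\ref{thm:S_X}. Your order-preservation argument simply fills in the detail that the paper dispatches with the phrase \emph{``it follows from the definition of $F_{d,k}$''} (one can even shortcut it: the setwise stabilizer of $\alpha$ must fix the minimum of $\alpha$ in the linear order, which lies in $\alpha$ by compactness).
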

\begin{proof}
Let $X = \partial \mathcal T_{d, k}$ and $\Gamma = F_{d, k}$. It follows from the definition of $F_{d, k}$ that for any non-empty clopen subset $\alpha \subseteq X$, the stabilizer $\Stab_\Gamma(\alpha)$ fixes a point in $\alpha$. In particular $\Stab_\Gamma(\alpha)$ fixes a Dirac probability measure on $\alpha$. Thus $\Gamma \in \mathcal S_X$, and the conclusion follows from Theorem~\ref{thm:S_X}.
\end{proof}

\begin{rmk}\label{rem:Gelfand}
N.~Monod \cite{Monod} has proved that a locally compact group $G$ having a compact subgroup $K$ such that  $(G, K)$ forms a Gelfand pair, must have a cocompact amenable closed subgroup. Y.~Neretin \cite[Th.~1.2]{Neretin} has proved that the group $\mathcal N_{d, 2} \cong \mathcal N_{d, d+1}$ has an open subgroup $A$ such that $(\mathcal N_{d, 2}, A)$ forms a generalized Gelfand pair. In view of Corollary~\ref{cor:intro:Neretin-confined}, we see that Monod's result cannot be extended from Gelfand pairs to generalized Gelfand pairs, even among simple groups. Without the condition of simplicity, this can be observed by a much more straightforward argument. Indeed, given a locally compact group $G$ and a subgroup $A$ containing a closed normal subgroup $N$ of $G$, every irreducible unitary representation of $G$ with a  nonzero $A$-invariant vector factorizes through a representation of $G/N$. In particular $(G, A)$ is a generalized Gelfand pair if and only if $(G/N, A/N)$ is one. Taking $N$ to a discrete free group of countable rank, $G = N \rtimes \mathrm{SL}_2(\mathbf Q_p)$ (where  $\mathrm{SL}_2(\mathbf Q_p)$ acts by permuting continuously the elements of a free basis of $N$) and $A = N \rtimes \mathrm{SL}_2(\mathbf Z_p)$, we deduce that $(G, A)$ is a generalized Gelfand pair (because $(\mathrm{SL}_2(\mathbf Q_p), \mathrm{SL}_2(\mathbf Z_p))$ is a Gelfand pair) such that $G$ does not have any cocompact amenable subgroup. 
\end{rmk}

\subsection{Quasi-regular representations}

We refer to \cite[Appendix F]{BHV} for general background about the Fell topology and weak containment of unitary representations. 

\begin{prop}\label{prop:Fell}
Let $G$ be a second countable locally compact group and $H \leq G$ be a closed subgroup. If the closure of the conjugacy class of $H$ in $\sub(G)$ contains the trivial subgroup $\{e\}$, then the quasi-regular representation $\lambda_{G/H}$ weakly contains the regular representation $\lambda_G$. 
\end{prop}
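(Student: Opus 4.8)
The plan is to exploit the hypothesis in the form: since $G$ is second countable, $\sub(G)$ is metrizable, so there is a sequence $(g_n)$ in $G$ with $H_n := g_n H g_n^{-1} \to \{e\}$ in the Chabauty topology. Concretely this means that for every compact subset $Q \subseteq G$ with $e \notin Q$, one has $H_n \cap Q = \varnothing$ for all large $n$. Since conjugation by $g_n$ induces a unitary equivalence $\lambda_{G/H} \cong \lambda_{G/H_n}$, it is enough to produce, inside the representations $\lambda_{G/H_n}$, matrix coefficients that approximate those of $\lambda_G$ uniformly on compacta.

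By Fell's criterion for weak containment (see \cite[Appendix F]{BHV}), the relation $\lambda_G \prec \lambda_{G/H}$ will follow once I show that every diagonal matrix coefficient $\varphi_\xi(g) = \langle \lambda_G(g)\xi, \xi\rangle$ with $\xi \in C_c(G)$ — these functions of positive type corresponding to a total set of vectors of the cyclic representation $\lambda_G$ — is a limit, uniformly on compact subsets of $G$, of diagonal matrix coefficients of $\lambda_{G/H_n} \cong \lambda_{G/H}$. To build the approximating vectors, fix $\xi \in C_c(G)$ with support $S$ and let $p_n \colon G \to G/H_n$ be the quotient map. Because $H_n$ eventually misses the compact set $S^{-1}S \setminus \{e\}$, the map $p_n$ is injective on $S$ for $n$ large; transporting $\xi$ through $p_n$ then yields a well-defined vector $\eta_n \in C_c(G/H_n)$ supported on $p_n(S)$. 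A direct computation shows that, for $g$ ranging over a fixed compact set $Q$, the coefficient $\langle \lambda_{G/H_n}(g)\eta_n, \eta_n\rangle$ converges to $\int_G \overline{\xi(x)}\,\xi(g^{-1}x)\,dx = \varphi_\xi(g)$, because the relevant cosets stay distinct over the compact region $Q^{-1}S \cup S$.

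The main technical point — and the step I expect to require the most care — is the bookkeeping of the quasi-invariant measure on $G/H_n$ and its Radon--Nikodym cocycle, since $G/H$ need not carry a $G$-invariant measure when $H$ is not unimodular in $G$. The claim is that, after choosing a continuous rho-function and normalizing $\eta_n$ appropriately, the cocycle factors entering the matrix coefficient tend to $1$ uniformly over $Q$, reflecting the fact that near the compact region $p_n(S)$ the measured quotient $G/H_n$ degenerates onto a piece of Haar-measured $G$ as $H_n \to \{e\}$; controlling this convergence uniformly is the crux. Conceptually, the whole argument is an instance of Fell's continuity of induction: the map $\sub(G) \to \mathrm{Rep}(G)$, $K \mapsto \lambda_{G/K} = \ind_K^G 1$, is continuous from the Chabauty topology to the Fell topology, so $H_n \to \{e\}$ forces $\lambda_{G/H_n} \to \lambda_{G/\{e\}} = \lambda_G$ in the Fell topology; as all $\lambda_{G/H_n}$ are unitarily equivalent to $\lambda_{G/H}$, the limit $\lambda_G$ lies in the Fell-closure of $\{\lambda_{G/H}\}$, which is precisely the statement $\lambda_G \prec \lambda_{G/H}$.
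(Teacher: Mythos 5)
Your closing paragraph is, almost word for word, the paper's entire proof: the paper simply invokes Fell's theorem \cite{Fell1964} that $K \mapsto \lambda_{G/K}$ is continuous from the Chabauty topology on $\sub(G)$ to the Fell topology, notes that each conjugate $H_n = g_nHg_n^{-1}$ yields a representation unitarily equivalent to $\lambda_{G/H}$, and concludes that the constant sequence $(\lambda_{G/H})$ Fell-converges to $\lambda_{G/\{e\}} = \lambda_G$, which is exactly the assertion $\lambda_G \prec \lambda_{G/H}$. So if you are content to quote Fell's continuity theorem, your argument is complete and identical in approach to the paper's. The problem lies in the hands-on argument you substitute for that citation.

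The gap is the step ``$H_n$ eventually misses the compact set $S^{-1}S \setminus \{e\}$, so $p_n$ is injective on $S$.'' When $G$ is non-discrete this set is \emph{not} compact: $S = \operatorname{supp}\xi$ has non-empty interior, so $S^{-1}S$ is a compact neighbourhood of $e$, the point $e$ is not isolated in it, and $S^{-1}S\setminus\{e\}$ is not closed. Chabauty convergence $H_n \to \{e\}$ only yields that $H_n \cap S^{-1}S$ is eventually contained in any prescribed neighbourhood of $e$; it can never force $H_n \cap S^{-1}S = \{e\}$. This failure is not hypothetical, and it occurs precisely in the application the paper makes of this proposition: there the relevant subgroups are conjugates of the \emph{open} amenable subgroups $O, O'$ of $\mathcal N_{d,k}$, so every $H_n$ is open and $H_n \cap S^{-1}S$ contains a non-trivial open set, whence $p_n$ is injective on no set with interior and your vector $\eta_n$ is simply not well defined. (A minimal example: $G = \mathbf{Q}_p \rtimes \mathbf{Z}$ with $\mathbf{Z}$ acting by powers of $p$, and $H = \mathbf{Z}_p$; the conjugates $p^n\mathbf{Z}_p$ converge to $\{e\}$ in $\sub(G)$ while lying entirely inside $S^{-1}S$.) The rho-function/cocycle bookkeeping you flag as the crux is real but downstream of this: the construction breaks before it starts. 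A genuine direct proof would instead use the averaging surjections $C_c(G) \to C_c(G/H_n)$, $\xi \mapsto \bigl(xH_n \mapsto \int_{H_n}\xi(xh)\,dh_n\bigr)$, for suitably normalized Haar measures $dh_n$ on $H_n$, and the convergence of the resulting matrix coefficients then hinges on showing that these normalized Haar measures converge vaguely to $\delta_e$ as $H_n \to \{e\}$ — which is essentially the content of Fell's theorem, so at that point you are re-proving the result the paper cites rather than bypassing it. (Your sketch is fine for discrete $G$, where $S^{-1}S\setminus\{e\}$ is finite and the measure issues vanish, but the proposition's point — and the paper's use of it — is the non-discrete case.)
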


\begin{proof}
By a result of Fell \cite{Fell1964}, if a sequence $(H_n)$ of closed subgroups converges to $J$ in $\sub(G)$, the corresponding sequence of quasi-regular representations $(\lambda_{G/H_n})$ converges to $\lambda_{G/J}$ in the Fell topology. By hypothesis, there exists a sequence $(H_n)$ of conjugates of $H$ in $G$ such that $\lim_n H_n = \{e\}$. Since $H_n$ is conjugate to $H$, the quasi-regular representation $\lambda_{G/H_n}$ is equivalent to $\lambda_{G/H}$. Hence, we deduce from Fell's result that the constant sequence $(\lambda_{G/H})$ converges to the regular representation $\lambda_G$ in the Fell topology. This is a reformulation of the fact that  $\lambda_{G/H}$ weakly contains $\lambda_G$. 
\end{proof}

Combining this with Corollary~\ref{cor:intro:Neretin-confined}, we obtain the following.

\begin{cor}\label{cor:weaklyregular}
Let $d, k \geq 2$ be integers. For any  relatively amenable subgroup  $A$ of $G = \mathcal N_{d, k}$,   the quasi-regular representation $\lambda_{G/A}$ is weakly equivalent to the regular representation $\lambda_G$. 
\end{cor}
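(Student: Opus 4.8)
The plan is to combine Corollary~\ref{cor:intro:Neretin-confined} with Proposition~\ref{prop:Fell}, using the general fact that weak containment between quasi-regular representations goes in both directions once the relevant orbit-closure relations hold. First I would recall that Corollary~\ref{cor:intro:Neretin-confined} tells us that $A$ is \emph{not} confined in $G = \mathcal N_{d,k}$: since $A$ is relatively amenable, the closure of its conjugacy class in $\sub(G)$ must contain the trivial subgroup $\{e\}$. The group $G$ is second countable (being compactly generated and totally disconnected locally compact), so Proposition~\ref{prop:Fell} applies directly and yields that $\lambda_{G/A}$ weakly contains $\lambda_G$. This is one half of the desired weak equivalence.

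For the reverse weak containment, namely that $\lambda_G$ weakly contains $\lambda_{G/A}$, I would use the relative amenability of $A$. The cleanest route is to invoke the standard fact that for a closed subgroup $A \leq G$, the quasi-regular representation $\lambda_{G/A}$ is weakly contained in the regular representation $\lambda_G$ if and only if $A$ is amenable (the Hulanicki--type characterization of amenability for subgroups). More precisely, relative amenability of $A$ in $G$ suffices here: since $A$ fixes a probability measure on every compact $G$-space, in particular $A$ admits an invariant mean on the relevant function space, which by a Reiter/F{\o}lner argument produces almost-invariant vectors for the conjugation/induction setup witnessing $\lambda_{G/A} \prec \lambda_G$. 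I would make this precise by citing the appropriate statement from \cite{BHV} (Appendix F) relating weak containment of $\lambda_{G/A}$ in $\lambda_G$ to amenability-type conditions on $A$.

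The main obstacle I anticipate is the precise formulation of the reverse containment for \emph{relatively} amenable (as opposed to amenable) subgroups, since the classical Hulanicki theorem is stated for amenable subgroups. I would address this by noting that the weak containment $\lambda_{G/A} \prec \lambda_G$ only requires that the constant function $1$ on $G/A$ be approximable by coefficients of $\lambda_G$, and this follows already from the existence of an $A$-invariant mean on $L^\infty(G)$, which is exactly what relative amenability provides via the fixed probability measure on $\partial_F G$ (Proposition before Proposition~\ref{prop:rel-amen-confined}). Combining the two containments, $\lambda_{G/A} \prec \lambda_G \prec \lambda_{G/A}$, gives the weak equivalence, completing the proof.

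\begin{proof}
The group $G = \mathcal N_{d,k}$ is compactly generated and totally disconnected locally compact, hence second countable. By Corollary~\ref{cor:intro:Neretin-confined}, no relatively amenable subgroup of $G$ is confined; in particular $A$ is not confined, so the closure of the conjugacy class of $A$ in $\sub(G)$ contains the trivial subgroup $\{e\}$. Proposition~\ref{prop:Fell} then gives that $\lambda_{G/A}$ weakly contains $\lambda_G$.

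Conversely, since $A$ is relatively amenable, it admits an invariant mean witnessing that $\lambda_{G/A}$ is weakly contained in $\lambda_G$ (see \cite[Appendix~F]{BHV}). Combining the two weak containments yields that $\lambda_{G/A}$ and $\lambda_G$ are weakly equivalent.
\end{proof}
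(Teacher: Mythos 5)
Your proof follows the same skeleton as the paper's: the containment $\lambda_G \prec \lambda_{G/A}$ is obtained exactly as in the paper, by combining Corollary~\ref{cor:intro:Neretin-confined} (non-confinement of $A$) with Proposition~\ref{prop:Fell}, and the reverse containment $\lambda_{G/A} \prec \lambda_G$ is attributed to relative amenability. That first half is correct, modulo one slip: compactly generated tdlc groups need \emph{not} be second countable (an uncountable product of finite groups is compact, hence compactly generated and tdlc, but not second countable); second countability of $\mathcal N_{d,k}$ holds for other reasons, e.g.\ it is $\sigma$-compact and metrizable, having the metrizable compact open subgroup $\Aut(\mathcal T_{d,k})$.

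The genuine gap is in your justification of $\lambda_{G/A} \prec \lambda_G$. The paper asserts this containment in one sentence as a known consequence of relative amenability; you attempt to prove it, and the argument you sketch does not work. Relative amenability gives an $A$-fixed probability measure on every \emph{compact} $G$-space, hence an $A$-invariant state on algebras carrying a jointly continuous $G$-action, such as $C(\partial_F G)$ or the uniformly continuous bounded functions on $G$. It does \emph{not} give an $A$-invariant mean on $L^\infty(G)$: the state space of $L^\infty(G)$ is not a compact $G$-space in the paper's sense, because translation is not norm-continuous on $L^\infty(G)$, so the defining property of relative amenability cannot be applied to it. This distinction is precisely the crux: an $A$-invariant mean on $L^\infty(G)$ restricts, along a Borel section of $G \to A\backslash G$, to an $A$-invariant mean on $L^\infty(A)$, i.e.\ it would make $A$ genuinely amenable --- and whether every relatively amenable closed subgroup is amenable is an open problem of Caprace--Monod \cite{Cap-Mon-RelMoy} (settled for discrete groups, which is why your argument feels familiar, but $\mathcal N_{d,k}$ is non-discrete). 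For the same reason, the Hulanicki-type equivalence you invoke concerns \emph{amenable} subgroups and cannot be fed mere relative amenability, and nothing in \cite[Appendix F]{BHV} covers the relatively amenable case. What is classical, and what suffices for the paper's only application of this corollary (Theorem~\ref{thm:nonType1}, where $O$ and $O'$ are amenable open subgroups), is: if $A$ is amenable, then $1_A \prec \lambda_A$, hence $\lambda_{G/A} = \ind_A^G 1_A \prec \ind_A^G \lambda_A \simeq \lambda_G$ by continuity of induction. So either quote the containment for relatively amenable $A$ as the paper does, or restrict your statement to amenable $A$; as written, your intermediate claim proves far more than the corollary and cannot be justified by the cited sources.
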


\begin{proof}
Since $A$ is relatively amenable, the representation $\lambda_{G/A}$ is weakly contained in $\lambda_G$. By Corollary~\ref{cor:intro:Neretin-confined}, the  closure of the conjugacy class of $A$ in $\sub(G)$ contains the trivial subgroup $\{e\}$. By Proposition~\ref{prop:Fell}, this implies that $\lambda_G$ is weakly contained in $\lambda_{G/A}$. 
\end{proof}

The following proposition records   a special case of well known results due to G.~Mackey~\cite{Mackey1951}. 

\begin{prop}\label{prop:Mackey}
Let $G$ be a second countable locally compact group and $H \leq G$ be an open subgroup. 

\begin{enumerate}[label=(\roman*)]
\item If $\Comm_G(H) = H$, then the unitary representation $\lambda_{G/H}$ is irreducible. 

\item If $J \leq G$ is an open subgroup such that every $H$-orbit on $G/J$ is infinite, then  the unitary representations $\lambda_{G/H}$ and $\lambda_{G/J}$ are not equivalent.
\end{enumerate}
	
\end{prop}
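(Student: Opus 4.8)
The plan is to give a direct Hilbert-space argument rather than invoking Mackey's general machinery, exploiting that $H$ and $J$ are open so that $G/H$ and $G/J$ are countable discrete sets and $\lambda_{G/H}$, $\lambda_{G/J}$ are the permutation representations on $\ell^2(G/H)$, $\ell^2(G/J)$. The single organizing principle is the following: since $G$ acts transitively on $G/H$, the vector $\delta_{eH}$ is cyclic for $\lambda_{G/H}$, so any $G$-equivariant bounded operator out of $\ell^2(G/H)$ is determined by the image of $\delta_{eH}$; and that image must be an $H$-invariant vector in the target space. Now an $H$-invariant vector in $\ell^2(G/J)$ is a square-summable function that is constant on each $H$-orbit of $G/J$, hence necessarily supported on the finite $H$-orbits only.

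For part (ii), I would apply this principle to an arbitrary bounded intertwiner $S\colon \ell^2(G/H)\to\ell^2(G/J)$. The vector $S\delta_{eH}$ is $H$-invariant in $\ell^2(G/J)$; but by hypothesis every $H$-orbit on $G/J$ is infinite, so the only $H$-invariant vector is $0$. Thus $S\delta_{eH}=0$, and by cyclicity of $\delta_{eH}$ this forces $S=0$. Hence $\mathrm{Hom}_G(\lambda_{G/H},\lambda_{G/J})=0$, and in particular the two representations admit no nonzero, let alone unitary, intertwiner, so they are inequivalent.

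For part (i), I would identify the commutant $\mathrm{End}_G(\ell^2(G/H))$ with the algebra of $G$-invariant kernels $k\colon (G/H)\times(G/H)\to\mathbf C$, which are exactly the functions constant on the $G$-orbits of $(G/H)^2$; these orbits are parametrized by the double cosets $H\backslash G/H$. The kernel attached to a double coset $HgH$ defines a bounded operator on $\ell^2(G/H)$ precisely when $HgH$ is simultaneously a finite union of left $H$-cosets and a finite union of right $H$-cosets, equivalently when $[H:H\cap gHg^{-1}]$ and $[gHg^{-1}:H\cap gHg^{-1}]$ are both finite, i.e. when $g\in\Comm_G(H)$. The kernels attached to distinct double cosets have disjoint supports and are therefore linearly independent, so the commutant has dimension equal to the number of $H$-double cosets inside $\Comm_G(H)$. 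Under the hypothesis $\Comm_G(H)=H$, the only surviving double coset is $H$ itself, whose kernel is the identity; hence the commutant reduces to $\mathbf C\cdot\mathrm{Id}$ and $\lambda_{G/H}$ is irreducible.

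The main obstacle is the boundedness analysis in part (i): one must verify both that each kernel with $g\in\Comm_G(H)$ yields a genuinely bounded operator, via a Schur-test estimate in which the operator norm is controlled by the finite row and column cardinalities, and conversely that no bounded intertwiner can be supported on a double coset outside $\Comm_G(H)$, since an infinite row or column sum already obstructs square-summability of the image of a basis vector. Part (ii), by contrast, is immediate once the cyclic-vector/invariant-vector mechanism is in place, as it only uses the vanishing of $H$-invariant $\ell^2$-vectors forced by the infinite-orbit hypothesis.
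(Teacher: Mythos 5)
Your proof is correct, and it takes a genuinely more self-contained route than the paper's, which disposes of the proposition in two sentences: part (i) is quoted directly from Mackey \cite[Theorem~$6'$]{Mackey1951}, and part (ii) is the observation that $\delta_{eH}$ is a nonzero $H$-invariant vector in $\ell^2(G/H)$ whereas the infinite-orbit hypothesis forces $\ell^2(G/J)^H=\{0\}$, so no unitary equivalence can exist. Your part (ii) uses exactly these two ingredients but channels them through an arbitrary intertwiner and the cyclicity of $\delta_{eH}$; this yields the formally stronger conclusion that $\lambda_{G/H}$ and $\lambda_{G/J}$ are disjoint, not merely inequivalent. Your part (i) replaces the citation by the standard commutant computation for permutation representations: a bounded operator $T$ commuting with $\lambda_{G/H}$ has matrix constant on the $G$-orbits of $(G/H)^2$, which are indexed by the double cosets $H\backslash G/H$, and square-summability of $T\delta_{eH}$ (the column constraint) together with the same argument applied to $T^{*}$ (the row constraint --- note that you do need to pass to the adjoint here, which is legitimate since the commutant is $*$-closed) forces the matrix to vanish on every double coset $HgH$ with $g\notin\Comm_G(H)$; when $\Comm_G(H)=H$ only the diagonal orbit survives and the commutant is $\mathbf{C}\cdot\mathrm{Id}$, giving irreducibility by Schur's lemma. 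Two inessential points should be trimmed or tightened: the claim that the commutant has dimension \emph{equal} to the number of double cosets contained in $\Comm_G(H)$ is more than you need and is not fully justified as stated (an infinite linear combination of the basic kernels need not define a bounded operator, so a dimension count requires extra care), and likewise the Schur-test direction (boundedness of the basic kernel for each $g\in\Comm_G(H)$) plays no role in proving irreducibility; all that part (i) requires is the support restriction just described. In exchange for its length, your argument makes the proposition independent of Mackey's paper and makes transparent that openness of $H$ and $J$ --- hence discreteness and countability of $G/H$ and $G/J$ --- is what drives both statements.
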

\begin{proof}
The first assertion follows from \cite[Theorem~$6'$]{Mackey1951}. To prove the second assertion, it suffices to observe that $H$ has a non-zero invariant vector in $\ell^2(G/H)$, while the hypothesis implies that the only $H$-invariant vector in $\ell^2(G/J)$ is the zero function. 
\end{proof}
		
We can now prove the following. 

\begin{prop}\label{prop:Ineq}
For all integers $d, k \geq 2$, the Neretin group $G = \mathcal N_{d, k}$ has two open amenable subgroups $O, O'$ such that the unitary representations $\lambda_{G/O}$ and $\lambda_{G/O'}$ are irreducible and inequivalent.
\end{prop}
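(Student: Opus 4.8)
The plan is to obtain both irreducibility and inequivalence from Mackey's criteria as packaged in Proposition~\ref{prop:Mackey}. It therefore suffices to exhibit two open subgroups $O, O' \leq G = \mathcal N_{d,k}$ that are simultaneously \textbf{(a)} amenable, \textbf{(b)} self-commensurating, meaning $\Comm_G(O) = O$ and $\Comm_G(O') = O'$, and \textbf{(c)} transverse, meaning that every $O$-orbit on the coset space $G/O'$ is infinite. Property~\textbf{(b)} makes $\lambda_{G/O}$ and $\lambda_{G/O'}$ irreducible by Proposition~\ref{prop:Mackey}(i), and property~\textbf{(c)} makes them inequivalent by Proposition~\ref{prop:Mackey}(ii); property~\textbf{(a)} is what the statement asserts and is also what will let Corollary~\ref{cor:weaklyregular} turn these two representations into the weakly equivalent pair needed for Theorem~\ref{thm:nonType1}.

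For the first subgroup I would take $O = \mathcal{O}_{d,k}$, the group of almost automorphisms fixing the ball structure around the root. It is open because it contains the compact open subgroup $\Aut(\mathcal T_{d,k})$, and it is amenable because it is a directed union of compact open subgroups \cite[\S6.3]{CapDM}, so \textbf{(a)} holds. For $O'$ I would take a second amenable open subgroup of the same $\mathcal{O}$-type, but built from an incompatible root datum so as to be genuinely non-conjugate to $O$; a natural candidate is the image of $\mathcal{O}_{d, k+d-1}$ under a fixed isomorphism $\mathcal N_{d,k+d-1} \xrightarrow{\sim} \mathcal N_{d,k}$, which is rooted at a vertex with a different number of descendants and hence cannot be a $G$-conjugate of $O$. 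Alternatively one can realise $O'$ inside a rigid stabilizer $\rist_G(\partial \mathcal T_u)$, with $u \neq v_0$, as the $\mathcal{O}$-type subgroup of that Neretin subgroup (not the full rigid stabilizer, which would fail to be amenable), assembled with the pointwise stabilizer of the complementary cones so as to remain open.

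The main obstacle is the self-commensuration \textbf{(b)}. The inclusions $O \subseteq \Comm_G(O)$ and $O' \subseteq \Comm_G(O')$ are automatic, so the work is in the reverse inclusions. I would argue on the boundary action on $\partial \mathcal T_{d,k}$: if $g$ commensurates $O$, then $g$ conjugates a finite-index subgroup of $O$ into $O$, and since $O$ is an increasing union of compact open subgroups each acting on $\partial \mathcal T_{d,k}$ through full automorphisms of a subtree based at the root, this commensuration relation should force $g$ to preserve the germ of the root-ball structure, hence to lie in $O$. Concretely this reduces to deciding when two rooted exhaustions of compact open subgroups are commensurable, a combinatorial question about almost automorphisms that can be handled with the commensurated-subgroup machinery already used in Section~\ref{sec-loc-bndry}. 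This is the step where the specific geometry of $\mathcal N_{d,k}$ really enters, in contrast to the purely representation-theoretic inputs of Proposition~\ref{prop:Mackey}.

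Finally, for the transversality \textbf{(c)} I would show that $O \cap g O' g^{-1}$ has infinite index in $O$ for every $g \in G$. Since such an intersection is forced to respect the root data of both $O$ and $g O' g^{-1}$ simultaneously, and since these data are incompatible by the choice of $O'$, the intersection is confined to a relatively compact part of $O$; as $O$ is non-compact, such a subgroup has infinite index, so every $O$-orbit on $G/O'$ is infinite. Combining \textbf{(a)}, \textbf{(b)} and \textbf{(c)} through Proposition~\ref{prop:Mackey} then produces the two irreducible inequivalent quasi-regular representations with amenable point stabilizers, completing the proof. I expect \textbf{(b)} to be the genuinely delicate point, with \textbf{(c)} following once the rigidity provided by self-commensuration and non-conjugacy is in place.
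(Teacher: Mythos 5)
Your high-level skeleton coincides with the paper's: both apply Proposition~\ref{prop:Mackey} to $O=\mathcal O_{d,k}$ and a second open amenable subgroup, reducing everything to your items (b) and (c). But those two items are exactly where the proof lives, and your proposal leaves them as expectations rather than arguments. The paper's proof of (b) does not go through ``germs of the root-ball structure''; it rests on two structural facts you never invoke. First, $O=\mathcal O_{d,k}$ is precisely the stabilizer $G_\nu$ of the unique $\Aut(\mathcal T_{d,k})$-invariant probability measure $\nu$ on $\partial\mathcal T_{d,k}$. Second, $O$ is topologically simple, hence has no proper open finite-index subgroup. The second fact converts commensuration into normalization: if $g\in\Comm_G(O)$ then $O\cap gOg^{-1}$ is open of finite index in $O$, hence equals $O$, so $\Comm_G(O)=N_G(O)$; the first fact then pins the normalizer down, $N_G(O)\le G_\nu=O$. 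For the second subgroup the paper takes $O'=N_O(P)$, where $P=\rist_O(\alpha_1)\times\cdots\times\rist_O(\alpha_k)$ is the product of the depth-one rigid stabilizers, and computes $\Comm_G(O')=O'$ using the Krull--Remak--Schmidt theorem: a commensurating element permutes the topologically simple factors, hence permutes the measures $\nu_i$, hence fixes $\nu=\frac1k\sum_i\nu_i$ and lies in $G_\nu=O$. Nothing in your write-up substitutes for these inputs, and you yourself flag (b) as the point you cannot yet do.

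For (c), your key claim --- that $O\cap gO'g^{-1}$ ``is confined to a relatively compact part of $O$'' --- is false for your candidate $O'$ (the image of $\mathcal O_{d,k+d-1}$ under $\mathcal N_{d,k+d-1}\cong\mathcal N_{d,k}$). Take $d=k=2$ and $g=e$: writing $\nu'$ for the measure stabilized by $O'$ and $b$ for a child of the root, both $\nu$ and $\nu'$ restrict to scalar multiples of the uniform measure on the cone $\partial\mathcal T_b$, so $O\cap O'$ contains the group of all uniform-measure-preserving almost automorphisms supported on $\partial\mathcal T_b$, which is a non-compact copy of $\mathcal O_{2,2}$. The mechanism that actually works is again topological simplicity: a finite $O$-orbit on $G/O'$ would make $O\cap gO'g^{-1}$ open of finite index in $O$, forcing $O\subseteq gO'g^{-1}$, hence $O$ fixes $g\nu'$, hence $g\nu'=\nu$ by uniqueness of the $O$-invariant measure. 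The paper excludes this because its $O'$ is not topologically simple while $O$ is; with your $O'$ one would instead have to check that no almost automorphism carries $\nu'$ to $\nu$, for instance by noting that $\nu$ gives small cones masses in $\frac1k\,d^{-\mathbb{N}}$ while $g\nu'$ gives them masses in $\frac1{k+d-1}\,d^{-\mathbb{N}}$, and $(k+d-1)/k$ lies strictly between $1$ and $d$, so it is not a power of $d$. In short, your choice of $O'$ is salvageable, but only by importing exactly the tools (the identification $O=G_\nu$, topological simplicity, and an explicit measure comparison) that the proposal omits; as written, (b) and (c) are genuine gaps and the stated route to (c) is incorrect.
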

\begin{proof}
Let $K$ be the full automorphism group of the rooted tree $\mathcal T_{d, k}$. Thus $K$ is a compact open subgroup of $\mathcal N_{d, k}$ acting transitively on the set of ends $X = \partial \mathcal T_{d, k}$.   Thus $K$ fixes a unique probability measure $\nu$ on $X$. 

Let $O$ denote the stabilizer of $\nu$ in $G$, namely $O = G_\nu$. Given  vertices  $v, w$ of $\mathcal T_{d, k}$, the set of ends $\partial \mathcal T_v$ and $\partial \mathcal T_w$ are basic clopen subsets of $X$. Moreover, we have $\nu(\partial \mathcal T_v) = \nu(\partial \mathcal T_w)$ if and only if $v$ and $w$ have the same level (i.e. their distance from the root are equal). This implies that  any element of $O $ can be represented by an almost automorphism $(A, B, \varphi)$ such that $A = B$ is a ball around the root. It follows that $O = \mathcal O_{d, k}$ (\S  \ref{subsec-ner-def}). Hence $O$ is the directed union of its compact open subgroups; in particular $O$ is amenable. Using \cite[Rem.~6.8 and Lem.~6.9]{CapDM}, we see that $O$ is topologically simple.  In particular, it does not have any proper open subgroup of finite index. It follows that $\Comm_G(O) = N_G(O)$. Since $\nu$ is the unique $K$-invariant probability measure on $X$, it is also the unique $O$-invariant probability measure. This implies that $N_G(O) \leq G_{\nu} = O$. Thus $\lambda_{G/O}$ is irreducible by Proposition~\ref{prop:Mackey}(i). 

Let now $v_1, \dots, v_k$ denote the $k$ descendants of the root of $\mathcal T_{d, k}$. Set $\alpha_i = \partial \mathcal T_{v_i}$. Set $O_i = \rist_O(\alpha_i)$.  The group 
$$P = \langle O_1 \cup \dots \cup O_k \rangle$$ 
is    isomorphic to the direct product $ O_1 \times \dots \times O_k$. We set $O' = N_O(P)$.  

Notice that $O'$ contains $K$, hence it is open. Moreover, we have $O' \leq O$ so that $O'$ is amenable. 
For each $i$, the sub-tree $\mathcal T_{v_i}$ is  naturally isomorphic to $\mathcal T_{d, d}$ and that map defines an isomorphism of $O_i$ to the group $\mathcal O_{d, d}$ defined above. Therefore, the group $O_i$ is topologically simple, so that $P$ does not have any proper open subgroup of finite index. It follows that  $\Comm_G(O')$ normalizes $P$, and thus acts by permutation on the $k$ direct factors $O_1, \dots, O_k$ by the Krull--Remak--Schmidt theorem (see \cite[3.3.8 and 3.3.11]{Robinson}). Since $O_i$ fixes a unique probability measure $\nu_i$ supported on $\alpha_i$, we deduce that $\Comm_G(O')$ permutes the measures $\nu_i$, hence it fixes $ \nu$, since we have $\nu = \frac 1 k \sum_{i=1}^k \nu_i$. Therefore we have $\Comm_G(O') \leq G_\nu =O$. This finally shows that $\Comm_G(O') = N_O(P) = O'$,  so that $\lambda_{G/O'}$ is irreducible by Proposition~\ref{prop:Mackey}(i).

In order to show that $\lambda_{G/O}$  and $\lambda_{G/O'}$ are not equivalent, it suffices to show by Proposition~\ref{prop:Mackey}(ii) that every $O$-orbit on $G/O'$ is infinite. Since $O$ is topologically simple, the existence of a finite $O$-orbit on $G/O'$ implies the existence of a fixed point, which implies in turn that $O $ is contained in some conjugate $gO'g^{-1}$. Since $O$ and $O'$ are both amenable and since $\nu$ is the unique $O$-invariant measure on $\partial T$, this implies that $O = gO'g^{-1}$. This is impossible since $O$ is topologically simple but $O'$ is not.
\end{proof}
	
We can now complete the proof of Theorem~\ref{thm:nonType1}. 

\begin{proof}[Proof of Theorem~\ref{thm:nonType1}]
By Proposition~\ref{prop:Ineq}, the group $G = \mathcal N_{d, k}$ has two inequivalent irreducible unitary representations of the form $\lambda_{G/O}$ and $\lambda_{G/O'}$, with $O, O'$ open amenable subgroups of $G$. In view of Corollary~\ref{cor:weaklyregular}, those representations are both weakly equivalent to the regular representation. By Glimm's theorem (see \cite[Theorem~1]{Glimm1961}), in a second countable locally compact group of type~I, any two weakly equivalent irreducible unitary representations are equivalent. It follows that $G$ is not a type~I group.
\end{proof}
	
\begin{rmk}
We believe that the amenable group $\mathcal O_{d, k}$, which is an open subgroup of $\mathcal N_{d, k}$, is not a type~I group either. This conjectural statement is formally stronger than Theorem~\ref{thm:nonType1} since the type~I property is inherited by open subgroups\footnote{This conjecture has been confirmed by R.~Arimoto~\cite{Arimoto} after the first version of the present paper was circulated.}. That conjecture, and the more ambitious problem of classifying the irreducible unitary CCR representations of $\mathcal O_{d, k}$,  raises interesting questions on the asymptotic representation theory of the finite symmetric groups. 
\end{rmk}

\subsection{Cocompact subgroups and Chabauty-isolation}\label{sec:coco}

In view of the result from \cite{BCGM} and its far-reaching generalization obtained in \cite{Zheng}, as well as Corollary \ref{cor:intro:Neretin-confined}, it is a natural problem to try to classify the confined subgroups of the group $\mathcal N_{d, k}$. Since closed cocompact subgroups are in a sense the most basic examples of confined subgroups, classifying these is a natural first step towards this problem.  The aim of this section, which is independent of the previous ones, is to establish the following:

\begin{thm}\label{thm:cocoNer}
Let $d, k \geq 2$ be integers. Any proper closed cocompact subgroup $H$ of $G= \mathcal N_{d, k}$ fixes a point in $\xi \in \partial  \mathcal T_{d, k}$. Moreover $H$ is a finite index subgroup of $G_\xi$.

In particular, the only closed cocompact unimodular subgroup of $G$ is $G$ itself. 
\end{thm}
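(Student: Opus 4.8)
The plan is to understand cocompact subgroups $H$ of $G=\mathcal N_{d,k}$ through the induced action on the boundary $X=\partial\mathcal T_{d,k}$, which is a compact, totally disconnected, $G$-boundary with the piecewise minimal-strongly-proximal property established in Lemma~\ref{lem:StrongProx}. First I would record the basic dynamical consequence of cocompactness: if $H\leq G$ is closed and cocompact, then its conjugacy class $\{gHg^{-1}:g\in G\}$ is closed in $\sub(G)$, so $H$ is confined. Combined with Corollary~\ref{cor:intro:Neretin-confined}, this already tells us $H$ is \emph{not} relatively amenable, but more usefully it lets me apply the structural lemmas of Section~\ref{sec-loc-bndry}. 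In particular, taking the orbit closure $\H$ of $H$ in $\sub(G)$, Lemma~\ref{lem-conf-fixedpts} gives a uniform bound $r$ on the number of finite orbits that any member of $\H$ can have on $X$; since the only $G$-boundary here is a single piece, I expect to use minimality of the $G$-action to argue that a cocompact $H$ must in fact have at least one finite orbit on $X$, and hence a fixed point or a finite invariant set.

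The key geometric step is to promote ``finite invariant set on the boundary'' to ``fixed end''. I would argue that a closed cocompact $H$ cannot act minimally on all of $X$ without being too large, and instead must stabilize a proper clopen subset or a finite set of ends. Here I would exploit the self-replicating structure of the Neretin group: the rigid stabilizer $\rist_G(\partial\mathcal T_v)$ of the set of ends below a vertex $v$ is itself isomorphic to $\mathcal N_{d,d}$, and $G$ acts highly transitively on vertices. If $H$ had no fixed end, a compactness/covering argument using cocompactness (every element of $G$ is within a fixed compact set of $H$) should force $H$ to contain, up to finite index, rigid stabilizers of arbitrarily deep subtrees, contradicting properness of the quotient $G/H$. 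Once a fixed end $\xi$ is produced, $H\leq G_\xi$, and cocompactness of $H$ in $G$ combined with cocompactness of $G_\xi$ (the end-stabilizer is itself cocompact, being the stabilizer of a point in the compact space $X$) forces $H$ to be \emph{cocompact in $G_\xi$}; since $G_\xi$ is a closed subgroup with a natural almost-automorphism description, I expect the finite-index conclusion to follow from the fact that $G_\xi$ is compactly generated and its proper closed cocompact subgroups are open of finite index.

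For the final sentence, the plan is to combine the main assertion with a unimodularity obstruction. Given a proper closed cocompact $H$, the main part shows $H$ has finite index in some $G_\xi$; so it suffices to show $G_\xi$ (and hence any finite-index subgroup of it) is \emph{not} unimodular. The end-stabilizer $G_\xi$ acts on the horospheres / the geodesic ray toward $\xi$, and this action scales the Haar measure by the local tree degree: an almost automorphism fixing $\xi$ and translating toward $\xi$ moves a compact open subgroup to a proper finite-index subgroup of itself (by a factor that is a power of $d$), so the modular function is nontrivial. Concretely, I would identify an element $g\in G_\xi$ whose conjugation distorts a compact open subgroup $U$ so that $[U:gUg^{-1}\cap U]\neq[gUg^{-1}:gUg^{-1}\cap U]$, giving $\Delta_{G_\xi}(g)\neq 1$; finite-index subgroups inherit the same modular function up to the subgroup, so $H$ is non-unimodular whenever $H\neq G$. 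Thus the only unimodular closed cocompact subgroup is $G$ itself (which is unimodular, being compactly generated topologically simple).

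The main obstacle I anticipate is the geometric step of forcing a genuine fixed end out of the mere existence of a finite invariant set, and then upgrading $H\leq G_\xi$ to finite index. The difficulty is that $G_\xi$ is itself a large, non-compact group, so ``cocompact in $G$'' does not immediately translate to a clean index bound; I expect to need a careful analysis of the structure of $G_\xi$ as an ascending union built from $\Aut(\mathcal T_{d,k})$ and horospherical contractions, analogous to the Baumgartner--Caprace--Gismatullin--Morris type arguments referenced via \cite{BCGM}, to rule out proper closed cocompact subgroups of $G_\xi$ other than the open finite-index ones.
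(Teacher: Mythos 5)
Your proposal has genuine gaps at both of its load-bearing steps, and in each case the missing content is precisely the technical heart of the paper's argument. First, the production of a fixed end. You hope to extract a finite $H$-orbit on $X=\partial\mathcal T_{d,k}$ from confinedness via Lemma~\ref{lem-conf-fixedpts}, but that lemma only gives an \emph{upper} bound on the number of finite orbits of members of a closed subset of $\sub(G)$ avoiding $\{e\}$; it never produces one. In the proof of Theorem~\ref{thm:S_X}, fixed points of limit conjugates are obtained from the hypothesis $H\in\mathcal{S}_X$ (fixing probability measures on clopen pieces), pushed to Dirac measures by strong proximality; and a proper closed cocompact subgroup of $\mathcal N_{d,k}$ \emph{cannot} satisfy that hypothesis, since it is confined (its conjugacy class is closed) while Theorem~\ref{thm:S_X} says no member of $\mathcal{S}_X$ is confined. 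So the machinery of Section~\ref{sec-loc-bndry} points in the opposite direction here and cannot give you the finite orbit. Your fallback ``compactness/covering'' heuristic is also unsubstantiated: you do not explain why the absence of a fixed end would force $H$ to contain rigid stabilizers of deep subtrees, and containing such rigid stabilizers would in any case not contradict being proper and cocompact---the proper cocompact subgroup $G_\xi$ itself contains $\rist_G(\partial\mathcal T_v)$ for every vertex $v$ with $\xi\notin\partial\mathcal T_v$, i.e.\ rigid stabilizers of arbitrarily deep subtrees.

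Second, the finite-index step. You assert that proper closed cocompact subgroups of $G_\xi$ are open of finite index, but closed cocompact subgroups of a tdlc group can a priori be discrete (lattices), as in $\Aut(T_d)$---the paper uses exactly this example at the end of Section~\ref{sec-loc-bndry}; ruling out such subgroups is the BCGM-type content of the theorem, not something that can be cited or assumed. The paper obtains both missing ingredients at once by intersecting $H$ with the open amenable subgroup $O=\mathcal O_{d,k}$: since $O$ is open, $O\cap H$ is cocompact in $O$, and Proposition~\ref{prop:cocompact-in-O}---proved by writing $\Sym(k_n)=A_nB_n$ in the finite quotients $O_n/U_n$ and applying the symmetric-group factorization result of Proposition~\ref{prop:Factor-Sym-group} (Jordan's theorem plus primes in $[n/2,n]$)---forces $O\cap H=O$ or $O\cap H=O_\xi$. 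The first case gives $H=G$ by simplicity; the second simultaneously yields the fixed end (via $2$-transitivity of $O$ on the boundary and simplicity of $G$) and shows $H\supseteq O_\xi$, so that $H$ is relatively open in $G_\xi$, whence cocompact $+$ open $=$ finite index. None of this finite permutation-group analysis appears in your plan; you only gesture at it in your closing paragraph as something you would ``need.'' By contrast, your final unimodularity paragraph is fine: exhibiting an element of $G_\xi$ whose conjugation distorts a compact open subgroup, or equivalently invoking (as the paper does) the nonexistence of a $G$-invariant probability measure on $\partial\mathcal T_{d,k}$ from Lemma~\ref{lem:StrongProx}, both show $G_\xi$ and its finite-index subgroups are non-unimodular.
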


It should be noted that the stabilizer $G_\xi$ of a point $\xi \in \partial \mathcal T_{d, k}$ does indeed have proper open subgroups of finite index. Indeed $G_\xi$ admits an infinite cyclic quotient. This can be seen for example by observing that $G_\xi$ is compactly generated (because it is cocompact in the compactly generated group $G$) but not unimodular (because $\partial \mathcal T_{d, k}$ does not carry any $G$-invariant probability measure by Lemma~\ref{lem:StrongProx}).

We also record the following consequence of independent interest. 
\begin{cor}\label{cor:Neretin=Chab-isol}
For all integers $d, k \geq 2$, the Neretin group $\mathcal N_{d, k}$ is an isolated point of the Chabauty space $\sub(\mathcal N_{d, k})$. 
\end{cor}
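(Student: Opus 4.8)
The plan is to show that $\{G\}$ is open in $\sub(G)$ by combining the classification of cocompact subgroups (Theorem~\ref{thm:cocoNer}) with a short compactness argument on the boundary $\partial \mathcal T_{d,k}$. Write $G = \mathcal N_{d,k}$. Since $G$ is second countable, $\sub(G)$ is compact metrizable, so it suffices to show that no sequence of closed subgroups $H_n \neq G$ converges to $G$ in the Chabauty topology (alternatively one argues with nets). Assume for contradiction that $H_n \to G$ with each $H_n$ a proper closed subgroup.

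First I would argue that $H_n$ is cocompact for all large $n$; this is the one point requiring a genuine argument, and it uses only compact generation of $G$. Fix a compact open subgroup $U \leq G$ and let $\Sigma$ be an associated Cayley--Abels graph: it is connected and locally finite, and $G$ acts continuously and vertex-transitively on its vertex set $G/U$. Let $o = eU$ and let $w_1, \dots, w_p$ be the finitely many neighbours of $o$, say $w_j = \gamma_j U$. The set $W = \{H \in \sub(G) : H \cap \gamma_j U \neq \varnothing \text{ for } j = 1, \dots, p\}$ is a Chabauty-open neighbourhood of $G$. For $H \in W$ one may choose $h_j \in H \cap \gamma_j U$, so that $h_j o = w_j$ and hence $N(o) \subseteq Ho$. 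Since the graph is $G$-equivariant, the neighbours of any vertex $ho \in Ho$ are $h\,N(o) \subseteq Ho$, so by connectedness $Ho$ is all of $\Sigma$. Thus every $H \in W$ acts vertex-transitively on $\Sigma$ and is therefore cocompact; in particular $H_n$ is cocompact once $H_n \in W$.

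By Theorem~\ref{thm:cocoNer}, each such proper cocompact $H_n$ fixes a point $\xi_n \in \partial \mathcal T_{d,k}$. Since $\partial \mathcal T_{d,k}$ is compact, after passing to a subsequence we may assume $\xi_n \to \xi \in \partial \mathcal T_{d,k}$. Now fix any $g \in G$. Because $H_n \to G$, the element $g$ lies in the Kuratowski lower limit of $(H_n)$, so there exist $h_n \in H_n$ with $h_n \to g$. Using that the action $G \times \partial \mathcal T_{d,k} \to \partial \mathcal T_{d,k}$ is jointly continuous, the equalities $h_n \xi_n = \xi_n$ pass to the limit to give $g\xi = \lim h_n \xi_n = \lim \xi_n = \xi$. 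As $g$ was arbitrary, $G$ fixes $\xi$. This contradicts the fact that $G$ acts transitively, indeed minimally by Lemma~\ref{lem:StrongProx}, on the infinite space $\partial \mathcal T_{d,k}$, and so has no global fixed end. Hence no such sequence exists and $G$ is isolated in $\sub(G)$.

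The main obstacle, and the only step that is not a formal manipulation, is the cocompactness claim of the second paragraph; once it is in place the statement is a genuine corollary of Theorem~\ref{thm:cocoNer}. The boundary argument of the third paragraph should be entirely routine, the only points to verify being the joint continuity of the action and the identification of $g \in G = \lim H_n$ with an element reachable as a limit $\lim h_n$ with $h_n \in H_n$.
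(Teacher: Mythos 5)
Your proof is correct and follows essentially the same route as the paper's: show that proper closed subgroups Chabauty-close to $G$ are cocompact, invoke Theorem~\ref{thm:cocoNer} to produce fixed ends $\xi_n$, and pass to the limit to get a global fixed end for $G$, contradicting transitivity on $\partial \mathcal T_{d,k}$. The only difference is that where the paper cites Bourbaki (VIII.5.3, Proposition~6) for the eventual cocompactness of subgroups converging to a compactly generated group, you prove that step directly via a Cayley--Abels graph argument; your argument is a correct, self-contained substitute in the tdlc setting (it is essentially a proof of the cited fact in that case).
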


\begin{proof}
A sequence $(H_n)$ of proper closed subgroups converging to $G = \mathcal N_{d, k}$ is eventually cocompact by \cite[VIII.5.3, Proposition 6]{Bourbaki_Int7-8} since $G$ is compactly generated. Hence upon extracting we may assume that $H_n$ fixes a point $\xi_n \in \partial \mathcal T_{d, k}$ for all $n$ by Theorem~\ref{thm:cocoNer}. The limit $G = \lim_n H_n$ must then fix any accumulation point of $(\xi_n)$ in $\partial \mathcal T_{d, k}$, which is absurd since $G$ is transitive on $\partial \mathcal T_{d, k}$.
\end{proof}

\begin{rmk} \label{rmk-aut-non-isolated}
Corollary~\ref{cor:Neretin=Chab-isol} lies in sharp contrast with the case of the full automorphism group $\Aut(T)$ of a non-rooted regular tree of degree $n \geq 4$, which is not Chabauty-isolated. Indeed the group $\Aut(T)$ can be approximated by a sequence of closed boundary-$2$-transitive subgroups of $\Aut(T)$  (see the Appendix to \cite{CapRad}). 
\end{rmk}

The proof of Theorem~\ref{thm:cocoNer} follows a similar strategy as in \cite{BCGM}. The key step is provided by the following. 

\begin{prop}\label{prop:cocompact-in-O}
Let $d, k$ be integers. The only  proper cocompact closed subgroups of $O = \mathcal O_{d, k}$ are the stabilisers $O_\xi$ of points $\xi \in \partial \mathcal T_{d, k}$.
\end{prop}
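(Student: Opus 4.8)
The plan is to combine two features of $O = \mathcal O_{d, k}$: it is amenable, unimodular and locally elliptic, writing $O = \bigcup_n U_n$ for the directed union of the compact open subgroups $U_n \cong \operatorname{Sym}(S_n) \ltimes \prod_{v \in S_n} \operatorname{Aut}(\mathcal T_v)$ (where $S_n$ is the set of vertices at distance $n$ from the root); and it acts transitively on the Cantor set $\partial \mathcal T_{d, k}$ preserving the uniform measure $\nu$, so that the orbit map gives $O/O_\xi \cong \partial \mathcal T_{d, k}$ and each $O_\xi$ is closed and cocompact. The crucial point is that, \emph{because $O$ is amenable}, this boundary action is far from strongly proximal: every element of $O$ preserves the partition of $\partial \mathcal T_{d, k}$ into the finitely many equal-mass level-$n$ cylinders and sends deep cylinders to cylinders of the same level. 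Hence the usual contraction argument producing fixed ends is unavailable, which is exactly what makes the statement delicate. I would prove the proposition in two steps: (I) a proper closed cocompact subgroup $H$ fixes an end $\xi$; (II) $H$ then coincides with $O_\xi$.

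Throughout I would use that cocompactness of $H$ is equivalent to the existence of $N$ with $O = U_N H$ (any compact transversal lies in some $U_N$ by local ellipticity). Two preliminary reductions are immediate. First, if $H$ fixes an end $\xi$ then $H \leq O_\xi$ and $O_\xi / H$ is closed in the compact space $O/H$, hence compact; so $H$ is automatically cocompact in $O_\xi$, and step (II) reduces to the assertion that $O_\xi$ has no proper closed cocompact subgroup. Second, $H$ cannot fix two distinct ends: the $O$-orbit of a pair of distinct ends accumulates on the diagonal (one can reroute sub-cylinders to bring the two ends arbitrarily close), so its stabiliser has non-compact orbit inside $(\partial \mathcal T_{d, k})^{(2)}$ and is never cocompact; this guarantees that the end produced in step (I) is unique.

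Step (I) is the main obstacle. Since strong proximality is lost, the fixed end must be extracted from the combinatorics of the rooted tree of cylinders together with the relation $O = U_N H$. The plan is to classify the compact transitive $O$-spaces: writing $O/H \cong U_N/(U_N \cap H)$ as a profinite $U_N$-space on which $O$ acts transitively, and using that $O$ is topologically simple (so it has no non-trivial finite continuous quotient action), one analyses how the $H$-action organises the level-$n$ cylinders as $n$ grows. Concretely I would track, for each $n \geq N$, the finite $(U_N \cap H)$-orbit structure on the level-$n$ cylinders and show, using $O = U_N H$ and measure-preservation, that $H$ stabilises a coherent decreasing sequence of cylinders whose intersection is an $H$-fixed end. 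The delicate feature to be handled is that an end-fixing element need not stabilise any \emph{single} cylinder around $\xi$ (it may reroute shallow cylinders), so the nested sequence cannot simply be taken to be globally $H$-invariant; I expect the clean way around this is to pass to the orbit closure of $H$ in $\sub(O)$ and exploit upper semicontinuity of the stabiliser map, or equivalently to build an $O$-equivariant continuous map $O/H \to \partial \mathcal T_{d, k}$ (whose value at $H$ is then the fixed end). This is where the real work lies.

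For step (II) I would analyse the horospherical structure of $O_\xi$. Writing $v_0, v_1, \dots$ for the ray representing $\xi$, the germ-trivial subgroup $O_\xi^0 = \bigcup_n \rist_O\big(\partial \mathcal T_{d, k} \setminus \partial \mathcal T_{v_n}\big)$ is normal in $O_\xi$ and is a directed union of the topologically simple amenable groups $\rist_O(\partial \mathcal T_{d, k} \setminus \partial \mathcal T_{v_n}) \cong \mathcal O_{d,\, m_n}$ with $m_n = k d^{n-1} - 1$, each acting minimally and strongly proximally on the corresponding clopen piece, while the quotient $O_\xi / O_\xi^0$ is the (locally elliptic) germ group $\Gamma_\xi$ at $\xi$. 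Given $H$ cocompact in $O_\xi$, I would show, by an argument parallel to step (I) applied to these rigid-stabiliser building blocks and using rigidity inputs analogous to those in the proof of Theorem~\ref{thm:S_X} (the commutator lemma \cite[Lem. 4.1]{Nek-expand} together with the triviality of the amenable radical of $\rist_O(\alpha)$ guaranteed by Proposition~\ref{prop-B(G)-am} and the strong proximality of its action), that $H$ must contain each such $\rist_O(\alpha)$, hence $O_\xi^0 \leq H$. A cocompactness count then forces the image of $H$ in $\Gamma_\xi$ to be onto, and combined with $O_\xi^0 \leq H$ this yields $H = O_\xi$, completing the proof.
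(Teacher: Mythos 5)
There is a genuine gap, in fact two. Your step (I) — producing the fixed end — is where the whole content of the proposition lies, and you have only a plan for it, not an argument: ``tracking the $(U_N\cap H)$-orbit structure on level-$n$ cylinders'' and ``building an $O$-equivariant map $O/H\to\partial\mathcal T_{d,k}$'' name the desired conclusion rather than a mechanism that forces it. The paper's proof supplies exactly the missing mechanism, and it is finite group theory, not topological dynamics: writing $O_n/U_n\cong\Sym(k_n)$ (where $U_n$ is the kernel of the action on the level-$n$ vertices), cocompactness gives a factorization $\Sym(k_n)=A_nB_n$ with $A_n$ the image of the fixed compact subgroup $O_{n_0}$ (transitive but imprimitive) and $B_n$ the image of $H\cap O_n$. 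A factorization theorem for finite symmetric groups (Proposition~\ref{prop:Factor-Sym-group}, proved via Jordan's theorem on primitive groups containing prime cycles and the existence of several primes in $[n/2,n]$) then yields the dichotomy: $B_n$ contains $\Alt(k_n)$, or $B_n$ fixes a point and contains the alternating group on the complement. Running this over all $n$ gives either $H$ dense (hence $H=O$) or a coherent family of fixed cylinders, hence a fixed end with $H$ dense in $O_\xi$. Nothing in your proposal replaces this input, and the soft tools you cite (topological simplicity, measure-preservation, upper semicontinuity) do not by themselves rule out an exotic compact transitive $O$-space.

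Your step (II) is worse than incomplete: its key claim is false. The rigid stabilizers $\rist_O(\alpha)$ inside $O=\mathcal O_{d,k}$ are (as you yourself note) isomorphic to groups $\mathcal O_{d,m}$, hence amenable and preserving the uniform measure on $\alpha$; consequently they do \emph{not} act strongly proximally on $\alpha$, and their amenable radical is the whole group rather than trivial. You assert both ``amenable'' and ``acting minimally and strongly proximally'' of the same groups, contradicting your own (correct) opening observation that amenability of $O$ kills strong proximality. The rigidity argument of Theorem~\ref{thm:S_X} — the commutator lemma combined with triviality of the amenable radical of rigid stabilizers, via Proposition~\ref{prop-B(G)-am} — works in $\mathcal N_{d,k}$ precisely because its rigid stabilizers are copies of Neretin groups; it cannot be transplanted inside the amenable group $O$, so your route to $O_\xi^0\leq H$ collapses. (In the paper, the conclusion $H=O_\xi$ in the fixed-point case is obtained by the same symmetric-group factorization argument, which shows $H$ contains a dense subgroup of $O_\xi$.)
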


The proof requires subsidiary facts on finite permutation groups.

\subsubsection{Factorizations of finite symmetric groups}

The following result is based on a refinement of the arguments from \S3 of \cite{BCGM}. 

\begin{prop}\label{prop:Factor-Sym-group}
Let $\nn = \{1, \dots, n\}$. Assume that $n$ is large enough so that the interval $[n/2, n]$ contains at least~$3$ primes (we remark that the Prime Number Theorem implies that the number of primes between $n/2$ and $n$ tends to infinity with $n$). 

Let  $A, B $ be subgroups of the symmetric group $ \Sym(\nn)$ such that 
$$\Sym(\nn) = AB,$$
where $AB= \{ab \mid a \in A, b\in B\}$.
 If $A$ is transitive and imprimitive on $\nn$, then either $B$ fixes a point  $x \in \nn$ and contains the full alternating group $\Alt(\nn \setminus \{x\})$, or $B$ contains $\Alt(\nn)$. 
\end{prop}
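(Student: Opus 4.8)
The plan is to convert the factorization hypothesis into the statement that $B$ contains several \emph{long prime cycles}, and then to feed these into classical primitivity theory (Jordan's theorem) together with a second use of the factorization. Throughout I fix three distinct primes $p_1 < p_2 < p_3$ with $n/2 < p_i \le n$; these exist for $n$ in the stated range. The arithmetic virtue of such a prime $p$ is that $2p > n$, so $p$ divides $|\Sym(\nn)| = n!$ exactly once, the Sylow $p$-subgroups have order $p$, and every element of order $p$ is a single $p$-cycle.

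First I would locate the cycles inside $B$. From $\Sym(\nn) = AB$ one gets that $n!$ divides $|A|\,|B|$, hence $v_p(|A|) + v_p(|B|) \ge 1$ for each $p = p_i$. The crucial observation is that a transitive imprimitive group cannot contain a $p$-cycle with $p > n/2$: if the blocks have size $c$, then $c \le n/2$ and the number of blocks is $\le n/2 < p$, so a $p$-cycle must fix every block setwise and then act on $< p$ points inside each block, forcing it to be trivial. By Cauchy's theorem this gives $p \nmid |A|$, whence $p \mid |B|$ and $B$ contains a $p_i$-cycle $\sigma_i$ for $i=1,2,3$.

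Next I would study the orbit carrying these cycles. Each $\sigma_i$ has support of size $p_i > n/2$, so the supports pairwise intersect and lie in a single $B$-orbit $\Omega$ with $m := |\Omega| \ge p_3 > n/2$. The restriction $B|_\Omega$ is transitive and contains each $\sigma_i$ as a $p_i$-cycle, and the same block argument (now with $p_i > m/2$) shows $B|_\Omega$ has no nontrivial block system, i.e.\ it is \emph{primitive}. Since any three distinct primes exceeding $3$ satisfy $p_3 - p_1 \ge 6$ (among $p_1, p_1+2, p_1+4$ one is divisible by $3$), the cycle $\sigma_1$ fixes at least $m - p_1 \ge p_3 - p_1 \ge 6 \ge 3$ points of $\Omega$. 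Jordan's theorem on a primitive group containing a prime cycle with at least three fixed points then gives $B|_\Omega \supseteq \Alt(\Omega)$. Finally, to pin $\Omega$ down inside $\nn$, I would use the factorization once more: for any $B$-orbit $O$ its complement is $B$-invariant, so $B$ fixes the point $O$ of the $\Sym(\nn)$-set $\binom{\nn}{|O|}$; since $\Sym(\nn) = AB$ is transitive on $\binom{\nn}{|O|}$, the factor $A$ is already transitive there, i.e.\ $A$ is $|O|$-homogeneous. As an imprimitive group is never $s$-homogeneous for $2 \le s \le n-2$ (reduce to $2$-homogeneity by Livingstone--Wagner, which forces primitivity), every $B$-orbit has size in $\{1, n-1, n\}$, so $m \in \{n-1, n\}$. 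If $m = n$ then $B = B|_\Omega \supseteq \Alt(\nn)$; if $m = n-1$ then $B$ fixes the unique remaining point $x$ and acts faithfully on $\Omega = \nn \setminus \{x\}$, so $B = B|_\Omega \supseteq \Alt(\nn \setminus \{x\})$. This is precisely the asserted dichotomy.

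The main obstacle I anticipate is guaranteeing that the primitive action on $\Omega$ contains a prime cycle with at least three fixed points, which is exactly what makes Jordan's theorem applicable; this is where having \emph{three} primes (and the resulting gap $p_3 - p_1 \ge 6$) is essential, rather than a weaker bound coming from one or two primes. A second delicate point is upgrading $B|_\Omega \supseteq \Alt(\Omega)$ to a genuine containment inside $\Sym(\nn)$: this could fail by a Goursat-type phenomenon if $\Omega$ had a large complement, but the homogeneity argument rules this out by forcing $\nn \setminus \Omega$ to consist of at most one point, on which $B$ then acts trivially.
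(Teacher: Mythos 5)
Your strategy runs parallel to the paper's own proof: use the factorization to force large prime divisors of $|B|$, hence long prime cycles in $B$; get primitivity from a cycle of length greater than half the degree; apply Jordan's theorem; and pin down the $B$-orbits by observing that $A$ must be transitive on the relevant sets of subsets, which a transitive imprimitive group cannot be. Two of your refinements are genuinely cleaner than the paper's treatment. First, you run Jordan's theorem on the full $B$-orbit $\Omega$ and manufacture the three needed fixed points from the prime gap $p_3 - p_1 \geq 6$; the paper instead takes $\Omega$ to be the union of the supports of a $p$-cycle and a $q$-cycle and asserts $|\Omega| \geq q+1$, which tacitly assumes the $p$-cycle's support is not contained in the $q$-cycle's support. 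Second, your Livingstone--Wagner reduction makes precise the paper's ``it is easy to see'' claim that a transitive imprimitive group is not $m$-homogeneous for $2 \leq m \leq n/2$.

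There is, however, a genuine gap at the opening move, and your argument relies on it essentially. You fix three primes $p_1 < p_2 < p_3$ with $n/2 < p_i \leq n$ and assert they exist ``for $n$ in the stated range''; but the hypothesis only provides three primes in the \emph{closed} interval $[n/2, n]$, and the two statements differ exactly when $n$ is even and $n/2$ is prime. Concretely, for $n = 14$ the interval $[7,14]$ contains the three primes $7, 11, 13$, yet only $11$ and $13$ exceed $n/2$. The prime $p = n/2$ is of no use to you: an element of order $n/2$ need not be a single cycle, and a transitive imprimitive group can contain $(n/2)$-cycles and have order divisible by $n/2$ (take $\Sym(n/2) \wr \Sym(2)$ with a cycle supported in one block), so neither your Cauchy step nor your ``no long cycles in $A$'' step survives for this prime. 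With only two usable primes your Jordan step can break: for $n = 14$, in the branch where $B$ fixes a point, so that $m = 13$, the group $B|_\Omega$ is primitive of degree $13$ and contains an $11$-cycle, which fixes only two points of $\Omega$ --- below the threshold of the cited Jordan theorem. (That configuration can be excluded, but only by invoking facts about primitive groups of degree $13$ beyond anything you have set up.) So as written your proof establishes the proposition only under the slightly stronger hypothesis that $(n/2, n]$ contains three primes; this is harmless for the paper's application, where the degree tends to infinity, but it is not the statement as given. The paper gets by with two primes strictly above $n/2$ (at most one prime of the interval can equal $n/2$), at the cost of the support-nesting issue noted above; repairing your argument requires either that weakening or a separate treatment of the edge case.
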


We first collect a couple of preliminaries. 
The following one is a theorem of C.~Jordan \cite{Jordan}.

\begin{prop}\label{prop:Jordan}
Let $\kk = \{1, \dots, k\}$. 
A primitive (hence transitive) subgroup of $\Sym(\kk)$ containing a prime cycle of order $p \leq k-3$ contains the full alternating group $\Alt(\kk)$.
\end{prop}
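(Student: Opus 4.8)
The plan is to prove this classical statement by Jordan's method of \emph{Jordan sets}. Write $\Omega := \kk$, let $G \leq \Sym(\kk)$ be the given primitive group, and let $\sigma \in G$ be the $p$-cycle, with support $\Sigma$ of size $p$ and fixed-point set $\Phi = \Omega \setminus \Sigma$ of size $k - p \geq 3$. Call a proper subset $\Gamma \subsetneq \Omega$ a \emph{Jordan set} if the pointwise fixator $G_{(\Omega \setminus \Gamma)}$ acts transitively on $\Gamma$. The first observation is that $\Sigma$ is a Jordan set: indeed $\sigma$ fixes $\Phi = \Omega \setminus \Sigma$ pointwise and, being a single cycle, acts transitively on $\Sigma$, so $\langle \sigma \rangle \leq G_{(\Phi)}$ already witnesses transitivity on $\Sigma$. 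The engine of the method is the \textbf{gluing lemma}: if $\Gamma_1, \Gamma_2$ are Jordan sets with $\Gamma_1 \cap \Gamma_2 \neq \varnothing$, then $\Gamma_1 \cup \Gamma_2$ is a Jordan set. This is immediate, since $\Omega \setminus (\Gamma_1 \cup \Gamma_2) \subseteq \Omega \setminus \Gamma_i$ gives $G_{(\Omega \setminus (\Gamma_1 \cup \Gamma_2))} \supseteq G_{(\Omega \setminus \Gamma_i)}$ for $i=1,2$, so fixing a reference point $x \in \Gamma_1 \cap \Gamma_2$ one reaches every point of $\Gamma_1$ via $G_{(\Omega \setminus \Gamma_1)}$ and every point of $\Gamma_2$ via $G_{(\Omega \setminus \Gamma_2)}$. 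Since any $G$-translate of a Jordan set is again a Jordan set, primitivity will let us enlarge Jordan sets by gluing overlapping translates.

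Second, I would show that $G$ is $2$-transitive. Let $\Gamma$ be a Jordan set of maximal size among proper Jordan sets (one exists and has size $\geq p \geq 2$), and suppose for contradiction $|\Gamma| \leq k-2$, so $\Delta := \Omega \setminus \Gamma$ has size $\geq 2$. If $|\Gamma| \leq k/2$, then since $2 \leq |\Gamma| \leq k-2$ and $G$ is primitive, $\Gamma$ is not a block, so some translate $g\Gamma$ meets $\Gamma$ without equalling it; by the gluing lemma $\Gamma \cup g\Gamma$ is a strictly larger Jordan set, and $|\Gamma \cup g\Gamma| \leq 2|\Gamma| \leq k$ with equality impossible (the intersection is non-empty), so it is still proper, contradicting maximality. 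If instead $|\Gamma| > k/2$, then every translate $g\Gamma$ meets $\Gamma$ (two subsets of size $> k/2$ cannot be disjoint), so by maximality and gluing each translate satisfies $g\Gamma = \Gamma$ or $g\Gamma \cup \Gamma = \Omega$; equivalently $g\Delta = \Delta$ or $g\Delta \cap \Delta = \varnothing$, making $\Delta$ a non-trivial block and contradicting primitivity. Hence $|\Gamma| = k-1$: the stabilizer of a point is transitive on the remaining points, i.e.\ $G$ is $2$-transitive.

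Third, I would climb to $(k-p+1)$-fold transitivity. The tool is the \emph{climbing lemma}: if $G$ is $j$-transitive and possesses a Jordan set whose complement has size exactly $j$, then $G$ is $(j+1)$-transitive, because $j$-transitivity makes all $j$-subsets $G$-equivalent, so the fixator of \emph{any} $j$-subset is transitive on its complement. To feed this lemma I need a Jordan set of each complement size $j = 1, 2, \dots, k-p$; the extreme sizes $1$ and $k-p$ are already in hand (from $2$-transitivity, and from $\Sigma$ itself). The technical step is to show that from a Jordan set whose complement has size $j \geq 2$ one can produce, by a controlled single-step enlargement via gluing with primitivity, a Jordan set whose complement has size $j-1$; iterating this from $\Sigma$ realizes every intermediate size. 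Starting from $2$-transitivity, the climbing lemma then yields $(k-p+1)$-transitivity, and since $k-p \geq 3$ the group $G$ is in particular at least $4$-transitive.

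Finally, the endgame: from a highly transitive $G$ containing the $p$-cycle $\sigma$ I would manufacture an element of very small support. Concretely, one seeks a conjugate $\tau = g\sigma g^{-1}$ whose support meets $\Sigma$ in exactly $p-1$ points with the cyclic orders agreeing there; then $\sigma^{-1}\tau$ is a $3$-cycle (as the model computation $(1\,2\,3)^{-1}(2\,3\,4) = (1\,3\,4)$ shows). Once $G$ is primitive and contains a $3$-cycle, Jordan's elementary argument shows that the normal closure of that $3$-cycle is all of $\Alt(\kk)$, giving $G \supseteq \Alt(\kk)$ as required. I expect this last step to be the main obstacle: engineering a conjugate $\tau$ whose support overlaps $\Sigma$ in precisely $p-1$ points \emph{and} with matching cyclic order amounts to placing $p$ specified points, which the available $(k-p+1)$-transitivity does not obviously permit when $p$ is close to $k$. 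Making the reduction uniform in $p$ (for instance by working inside the rigid stabilizer $G_{(\Phi)}$ of the fixed points of $\sigma$, or by shortening the cycle one prime at a time) is the delicate part of the proof, as is the bookkeeping in the descent producing the nested Jordan sets needed in the third step.
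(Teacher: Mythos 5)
For context: the paper does not prove this statement at all, it simply cites Wielandt (Theorem~13.9), so you are attempting a from-scratch proof of a classical theorem. Your first two steps are correct and complete: the support $\Sigma$ of the $p$-cycle is a Jordan set, translates of Jordan sets are Jordan sets, the gluing lemma is right, and the maximal-Jordan-set argument (splitting on $|\Gamma|\le k/2$ versus $|\Gamma|>k/2$, with the complement becoming a block in the second case) is a correct proof of $2$-transitivity. The first genuine gap is in step three. The claim that a Jordan complement of size $j\ge 2$ can always be shrunk to one of size $j-1$ \enquote{by a controlled single-step enlargement via gluing} is not merely unproven --- as a general principle it is false. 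Take $G=\mathrm{AGL}(3,2)$ acting on $\mathbf{F}_2^3$ ($8$ points, primitive, $3$-transitive): for a linear hyperplane $H=\ker f$, the linear maps $x\mapsto x+f(x)v$ with $v\in H$ fix $H$ pointwise and act transitively on the coset $\Gamma=f^{-1}(1)$, so $\Delta=H$ is a transitive Jordan complement of size $4$; yet no Jordan complement of size $3$ can exist, since by your own climbing lemma it would force $4$-transitivity, while $|\mathrm{AGL}(3,2)|=1344<8\cdot 7\cdot 6\cdot 5$. So interpolation of complement sizes fails, and it is telling that your steps two and three never use the primality of $p$. The classical climb uses it crucially and differently: $\langle\sigma\rangle$ acts on $\Sigma$ transitively of prime degree, hence \emph{primitively}, and Jordan's theorem for Jordan sets with primitive fixator action upgrades $G$ to $2$-primitivity; one then passes to the stabilizer of a point of $\Phi$ (still primitive, still containing $\sigma$) and iterates $k-p$ times to reach $(k-p+1)$-transitivity. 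Note that gluing is incompatible with this route: the transitivity witness for a glued set is a generated subgroup with no cycle structure left, so primality cannot be propagated through your enlargement process.

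The second gap is the endgame, which you candidly flag yourself, and your worry is justified: aligning a conjugate $\tau=g\sigma g^{-1}$ with $\Sigma$ in $p-1$ points \emph{with matching cyclic order} amounts to prescribing $g$ on $p$ points, i.e.\ essentially $p$-transitivity, which $(k-p+1)$-transitivity does not give when $p>k-p+1$; the alternative commutator trick (if two $p$-cycles have supports meeting in exactly one point then $[\sigma,\tau]$ is a $3$-cycle) needs $2p\le k+1$ and so also fails for large $p$. This is precisely where the hypothesis $p\le k-3$ must do real work, since it excludes genuine exceptions such as $\mathrm{AGL}(1,p)$ of degree $p$, $\mathrm{PGL}_2(p)$ of degree $p+1$, and $\mathrm{P\Gamma L}_2(8)$ of degree $9=7+2$, all primitive with a $p$-cycle and not containing the alternating group; an argument in which that hypothesis only appears as \enquote{$k-p\ge 3$, so $G$ is $4$-transitive} cannot be complete. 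As it stands, then, your proposal is a correct opening through $2$-transitivity followed by two essential gaps; the honest options are to cite Wielandt as the paper does, or to follow the actual Jordan--Wielandt argument (primitive Jordan action, iterated $2$-primitivity, then the descent/minimal-support reduction to a $3$-cycle and the $3$-cycle theorem).
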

\begin{proof}
See Theorem~13.9 in \cite{Wielandt}. 
\end{proof}

The following result appears implicitly in \S3 of \cite{BCGM}. 

\begin{lem}\label{lem:Factor-Sym-group}
Let $\nn = \{1, \dots, n\}$ and $A, B $ be subgroups of the symmetric group $ \Sym(\nn)$ such that $\Sym(\nn) = AB$. We assume that $A$ is transitive and imprimitive on $\nn$. If the interval $[n/2, n]$ contains at least~$3$ primes, then there exists a subset $\Omega \subset \nn$ of size $k \geq n/2 + 4$ such that $B$ contains the full alternating group $\Alt(\Omega)$. 
\end{lem}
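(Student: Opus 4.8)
The plan is to first force $B$ to contain large prime cycles by a counting argument, and then to assemble two such cycles into a \emph{primitive} group to which Jordan's theorem (Proposition~\ref{prop:Jordan}) can be applied. The whole argument rests on the elementary order identity $|A|\,|B| = n!\,|A\cap B|$ coming from $\Sym(\nn) = AB$.

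First I would exploit imprimitivity of $A$. Being transitive and imprimitive, $A$ lies in the stabiliser of a nontrivial block system, hence in a wreath product $\Sym(b) \wr \Sym(m)$ with $bm = n$ and $b,m \ge 2$, of order $(b!)^m\, m!$. Since both $b \le n/2$ and $m \le n/2$, a prime $p$ with $n/2 < p \le n$ divides neither $b!$ nor $m!$, so $p \nmid |A|$. On the other hand $v_p(n!) = 1$ for such $p$ (its only multiple $\le n$ is $p$ itself), where $v_p$ denotes the $p$-adic valuation. Feeding this into $|A|\,|B| = n!\,|A\cap B|$, and using $v_p(|A\cap B|) \le v_p(|A|) = 0$, gives $v_p(|B|) = 1$; hence $p \mid |B|$. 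By Cauchy's theorem $B$ contains an element of order $p$, and since $2p > n$ that element is a single $p$-cycle together with fixed points.

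Next I would select, among the primes in $[n/2,n]$ furnished by the hypothesis, two primes $p < q$ lying in $(n/2,n]$ with $q \ge p + 4$; having three primes available (rather than one) is precisely what leaves room to do this safely near the endpoint $n/2$, and it also forces $q \ge n/2 + 4$. Let $\sigma \in B$ be a $p$-cycle and $\tau \in B$ a $q$-cycle, with supports $S,T$, and put $\Omega = S \cup T$ and $G = \langle \sigma, \tau\rangle \le B$. As $|S|+|T| = p+q > n \ge |\Omega|$, the supports overlap, so $G$ is transitive on $\Omega$, and $|\Omega| \ge |T| = q \ge n/2 + 4$, which already gives the required size.

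The main obstacle — and the crux of the whole lemma — is to show that $G$ acts \emph{primitively} on $\Omega$. Here I would argue by a block-counting trick. In any nontrivial block system the blocks have size $d \ge 2$, so their number is $m' \le |\Omega|/2 \le n/2 < q$. The permutation $\tau$ induces on the $m'$ blocks has order dividing $q$ and dividing $m'!$; since $q$ is prime and $q > m'$, this order is $1$, i.e.\ $\tau$ fixes every block setwise. But then the single $q$-cycle $T$ must lie inside one block, forcing $d \ge q > n/2$ and contradicting $d \le |\Omega|/2 \le n/2$. Hence $G$ is primitive. Finally $G$ contains the prime cycle $\sigma$ of order $p$, and $|\Omega| \ge q \ge p + 4$ gives $p \le |\Omega| - 3$; Proposition~\ref{prop:Jordan} then yields $\Alt(\Omega) \le G \le B$ with $|\Omega| \ge n/2 + 4$, completing the proof. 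The only delicate point outside the primitivity step is the arithmetic of choosing the two primes with a gap of at least $4$, which is routine given the three-prime hypothesis.
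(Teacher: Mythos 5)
Your proof follows the paper's strategy almost step for step: imprimitivity of $A$ excludes the primes of the top half from $|A|$ (you via the order identity $|A|\,|B|=n!\,|A\cap B|$, the paper via transitivity of $B$ on $\Sym(\nn)/A$ --- equivalent), Cauchy then produces prime cycles in $B$, and one runs transitivity, block-counting primitivity, and Proposition~\ref{prop:Jordan} on the union $\Omega$ of the two supports. The one place where you deviate is also where the gap lies. You need two primes $p<q$ \emph{strictly} above $n/2$ with $q\ge p+4$, and you claim the three-prime hypothesis guarantees this; that claim is false. Take $n=14$: the primes in $[7,14]$ are $7,11,13$, so the hypothesis of Lemma~\ref{lem:Factor-Sym-group} holds, but the prime $7=n/2$ is unusable for your divisibility step (it can divide $|A|$, e.g.\ for $A=\Sym(7)\wr\Sym(2)$), and the two remaining primes $11,13$ are twins, with gap $2<4$. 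Run with $(p,q)=(11,13)$, your argument then breaks exactly in the nested case $S\subseteq T$ that the gap condition was designed to handle: there $|\Omega|=13$ and $p=11>|\Omega|-3$, outside the range of Proposition~\ref{prop:Jordan}. For every other $n$ satisfying the hypothesis --- namely $n=13$ and all $n\ge 17$, for which the open interval $(n/2,n]$ itself contains at least three (odd) primes, whence two of them differ by at least $4$ --- your proof is correct.

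To be fair, the subtlety you tried to address is genuine, and the paper itself is cavalier about it: its proof asserts $|\Omega|\ge q+1$, i.e.\ that the two supports are never nested, without justification, and this can actually fail. Indeed, for $n=14$ take $A=\Sym(7)\wr\Sym(2)$ and $B$ the stabilizer of a point, so $B\cong\Sym(13)$ and $\Sym(\nn)=AB$ because $A$ is transitive; then every $11$-cycle of $B$ has support contained in the support of every $13$-cycle of $B$, so $|\Omega|=q$ for every admissible choice. (The conclusion of the lemma still holds there, since $B$ itself contains $\Alt(\Omega)$ with $|\Omega|=13$.) So your instinct to force a gap of $4$ between the primes is the right kind of repair, but to make it rigorous you need the stronger input that $(n/2,n]$ contains three primes --- true for all $n\ge 17$ --- together with a separate treatment of the single remaining case $n=14$, which neither your argument nor the paper's handles as written.
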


For the sake of completeness, we reproduce the proof borrowed from \cite{BCGM}. 
\begin{proof}[Proof of Lemma~\ref{lem:Factor-Sym-group}]
Since $A$ is transitive but imprimitive, it is contained in a subgroup of $\Sym(\nn)$ of the form $\Sym(d) \wr \Sym(n/d)$ for some proper divisor $d$ of $n$. In particular no prime in the interval $[\frac{n+1}{2},n]$ divides the order of $A$. Therefore, the transitivity of $B$ on $\Sym(\nn)/A$ implies that any such prime divides $|B|$. The hypothesis made on $n$ implies that $[\frac{n+1}{2},n]$ contains two distinct primes, say $p<q$. Let $\alpha, \beta \in B$ be two elements of order $p$ and $q$. Then $\alpha$ and $\beta$ act as $p$- and $q$-cycles on $\nn$, whose supports intersect non-trivially. Let $\Omega$ be the union of their support. We have $|\Omega|  \geq q+1 \geq p+3 \geq n/2+4$. The subgroup of $B$ generated by $\alpha$ and $\beta$ is transitive on $\Omega$. Therefore  it is also primitive, since it contains a $p$-cycle with $p> |\Omega|/2$. We infer from  Proposition~\ref{prop:Jordan}  that it contains $\Alt(\Omega)$.
\end{proof}

\begin{proof}[Proof of Proposition~\ref{prop:Factor-Sym-group}]
Throughout, we set $G = \Sym(\nn)$.

We first claim that $B$ has at most two orbits on $\nn$, one of which is of size $\geq n-1$. If this were not the case, then $B$ would stabilise a proper subset of $\nn$ for size $m$, with $2 \leq m \leq n/2$. It then follows that $B$ is contained in a subgroup $M \leq G$ of the form $M \cong \Sym(m) \times \Sym(n-m)$. Since $A$ is transitive on $G/B$ by hypothesis, it follows that $A$ is also transitive on $G/M$, which is naturally in one-to-one correspondence with the set of subsets of size $m$   in $\nn$. By hypothesis $A$ is imprimitive on $\nn$, and it is easy to see that it can therefore not act transitively on the set of unordered $m$-tuples in $\nn$ (recalling that $2 \leq m \leq n/2$).  This proves the claim. 

Let now $\mathbf o \subseteq \nn$ be the largest $B$-orbit. By the claim we have either $\mathbf o = \nn$ or $\mathbf o = \nn \setminus \{x\}$ for some $B$-fixed point $x \in \nn$.
We then invoke Lemma~\ref{lem:Factor-Sym-group}, which has two consequences: the $B$-action on  $\mathbf o$  is primitive (since any invariant $B$-invariant partition on $\mathbf o$ induces a $\Stab_B(\Omega)$ invariant partition on $\Omega$), and it contains a $3$-cycle. Therefore Proposition~\ref{prop:Jordan} implies that $ B$ contains  $\Alt(\mathbf o)$. This finishes the proof. 
%
%
%
%
\end{proof}

\subsubsection{Cocompact subgroups of $\mathcal O_{d, k}$ and $\mathcal N_{d, k}$}

\begin{proof}[Proof of Proposition~\ref{prop:cocompact-in-O}]
Set $O = \mathcal O_{d, k}$ 
 and let $H < O$ be a  cocompact closed subgroup. We view $O$ as an ascending chain of the compact open subgroups $O_n$ defined as in \cite{BCGM}, namely $O_n$ consists of those elements represented by almost automorphisms of the form $(A, A, \varphi)$ where $A$ is the $n$-ball around the root of $\mathcal T_{d, k}$. By compactness, there exists $n_0$ such that $O = O_n H$ for all $n \geq n_0$. 

Let $H_n = O_n \cap H$. We have $O_n = O_{n_0} H_n$ for all $n \geq 0$. Let now $U_n \leq O_n$ be the closed normal subgroup consisting of those elements represented by almost automorphisms of the form $(A, A, \varphi)$ where $A$ is the $n$-ball around the root of $\mathcal T_{d, k}$ and $\varphi$ stabilizes each connected component of the forest $\mathcal T_{d, k} \setminus A$.  The quotient $O_n /U_n$ is a symmetric group $\Sym(k_n)$ whose degree $k_n$ tends to infinity with $n$. Let $A_n$ (resp. $B_n$) denote the image of $O_{n_0}$ (resp. $H_n$) in the quotient $O_n/U_n$, which we identity with $\Sym(k_n)$. We have $\Sym(k_n) = A_n B_n$. Moreover $A_n$ is transitive but not primitive for any $n>n_0$. Proposition~\ref{prop:Factor-Sym-group} implies that $B_n$ contains $\Alt(k_n)$ or fixes a point and contains $\Alt(k_n-1)$. 

The first case implies that the image of $H\cap O_{n-1}$ is the full quotient $O_{n-1}/U_{n-1} \cong \Sym(k_{n-1})$. If that happens for infinitely many values of $n$, then we have $(H\cap O_{n-1})U_{n-1} = O_{n-1}$ for infinitely many $n$'s, so that $H U_{n-1} = O$ for infinitely many $n$'s. This means that $H$ is dense in $O$, hence $H=O$ since $H$ is closed. 

Otherwise, we deduce that $H$ fixes a point $\xi \in
\partial \mathcal T_{d, k}$ and  a similar argument shows that $H$ contains  a dense subgroup of the stabilizer $O_\xi$. Thus $H = O_\xi$. 
\end{proof}

\begin{proof}[Proof of Theorem~\ref{thm:cocoNer}]
Set $G = \mathcal N_{d, k}$ and let $H \leq G $ be a   cocompact closed subgroup. Since $O = \mathcal O_{d, k}$ is open in $G$, it has open orbits in $G/H$, hence finitely many orbits, hence clopen orbits. In particular $OH/H \cong O /O \cap H$ is compact. 

By Proposition~\ref{prop:cocompact-in-O}, we have $O\cap H = O$ or $O\cap H = O_\xi$ for some $\xi \in \partial \mathcal T_{d, k}$. The former case implies that $H$ is open, hence of finite index, in $G$. Since $G$ is simple (see \cite{Kapoudjian} for the case where $k = d+1$; in the general case, a similar argument can be applied, exploiting that $O$ is topologically simple by \cite[Rem.~6.8 and Lem.~6.9]{CapDM}), this implies $H=G$, and we are done in this case. 

We assume henceforth that $O \cap H = O_\xi$. 

We next claim $H$ fixes $\xi$. Indeed,  there would otherwise exist  $h \in H$ which does not fix $\xi$. We then see that $H$ contains $\langle O_\xi \cup hO_\xi h^{-1}\rangle$. Since $O$ is $2$-transitive (in fact $\infty$-transitive) on $\partial \mathcal T_{d, k}$, we infer that $H$ is transitive on $\partial \mathcal T_{d, k}$. Therefore $G = G_\xi H$. Hence $G_\xi /G_\xi \cap H$ is homeomorphic to $G/H$, and is thus compact. Since $G_\xi \cap H$ contain $O_\xi = O \cap G_\xi$, it is relatively open in $G_\xi$. Therefore $G/H \cong G_\xi/G_\xi \cap H$ is finite. As before, this implies that $H=G$,  contradicting that $O \cap H = O_\xi$.  

This proves that $H$ fixes $\xi$. We have $O_\xi \leq H \leq G_\xi$, so that $H$ is relatively open in $G_\xi$. Since $H$ is cocompact in $G$, it is cocompact in $G_\xi$, hence of finite index. The result follows. 

For the last claim, it remains to observe that the stabiliser $G_\xi$ is not unimodular, because $G$ does not preserve any probability measure on $\partial \mathcal T_{d, k}$ by Lemma~\ref{lem:StrongProx}. Therefore, if $H$ is unimodular, we must have $H = G$.
\end{proof}

{\small 
\bibliographystyle{abbrv}
\bibliography{Ner_rep}
}
\end{document}